\newtheorem{theorem}{Theorem}[section]
\newtheorem{corollary}{Corollary}
\newtheorem{proposition}{Proposition}
\theoremstyle{definition}
\newtheorem{definition}[theorem]{Definition}
\newtheorem{remark}{Remark}
\newtheorem{ex}{Example}
\renewcommand{\d}{\mathrm d}
\newcommand{\X}{\ensuremath{\mathfrak{X}}}
\newcommand{\U}{\ensuremath{\mathcal{U}}}
\newcommand{\C}{\mathcal{C}}
\newcommand{\CV}{\mathcal{C}_V}
\renewcommand{\H}{\mathcal{H}}          % Lie subgroupoid
\newcommand{\h}{\mathfrak{h}}           % Lie algebra
\newcommand{\Ver}{\mathcal{V}}          % Vertical subbundle
\newcommand{\Hor}{\text{\rm Hor}}       % Horizontal subbundle
\newcommand{\Lie}{\mathcal{L}}          % Lie derivative
\newcommand{\im}{\text{\rm Im}\,}       % Kernel
\newcommand{\ad}{\text{\rm ad}\,}       % adjoint
\newcommand{\g}{\mathfrak{g}}           % Lie algebra
\newcommand{\D}{\mathcal{D}}
\newcommand{\loc}{\, \substack{{\scriptscriptstyle loc.}\\{\sim}}\, }
\DeclareMathOperator{\id}{id}
\DeclareMathOperator{\Symp}{Symp}
\DeclareMathOperator{\Der}{Der}
\DeclareMathOperator{\Graph}{Graph}
\DeclareMathOperator{\Curv}{\text{Curv}}
\newcommand{\partialV}{\d^{_V}}
\newcommand{\partialC}{\d^{_C}}
\newcommand{\partialH}{\d^{_H}}
\newcommand{\partialCp}{\d^{_{C'}}}
\def\Ddots{\mathinner{\mkern1mu\raise\p@
\vbox{\kern7\p@\hbox{.}}\mkern2mu
\raise4\p@\hbox{.}\mkern2mu\raise7\p@\hbox{.}\mkern1mu}}
\newcommand{\comment}[1]{}
\title[Infinitesimal Gauge Symmetries of closed forms]
      {Infinitesimal Gauge Symmetries of closed forms}
\author[Olivier Brahic]{}
\subjclass{Primary: 53C8, 70S15; Secondary: 55R20.}
 \keywords{Differential geometry, Connections, Higher gauge theory.}
 \email{brahic@math.ist.utl.pt}
\thanks{The author is supported by FCT/POCTI/FEDER and by the projects POCI/MAT/55958/2004 and POCI/MAT/57888/2004}
\begin{document}
\maketitle

% Enter the first author's name and address:
\centerline{\scshape Olivier Brahic}
\medskip
{\footnotesize
% please put the address of the first author
 \centerline{Instituto Superior T\'{e}cnico, 
    dep. de Matem\'{a}tica.Lisboa, Portugal.}
} % Do not forget to end the {\footnotesize by the sign }

%\bigskip

%The abstract of your paper
\begin{abstract}
 Motivated by the relationship between symplectic fibrations and classical Yang-Mills theories, we study the closedness of a $n$-form ($n$=2,3) defined on the total space of a fibration as a simple model for an abstract field theory. We introduce $2$-plectic fibrations and interpret geometrically the corresponding equations for coupling in terms of higher analogues of connections.
\end{abstract}
\tableofcontents
\maketitle
%%%%%%%%%%%%%%%%%%%%%%%%%%%%%%%%%%%%
%%%%%%%%%%%%%%%%%%%%%%%%%%%%%%%%%%%%
%%%%%%%%%%%%%%%%%%%%%%%%%%%%%%%%%%%%
\section{Introduction}             %
\label{sec:introduction}           %
%%%%%%%%%%%%%%%%%%%%%%%%%%%%%%%%%%%%
%%%%%%%%%%%%%%%%%%%%%%%%%%%%%%%%%%%%
%%%%%%%%%%%%%%%%%%%%%%%%%%%%%%%%%%%%
It is a basic idea to look at closed forms defined on the total space of a fibration 
 as a simple model for an abstract field theory. Indeed, a well known fact in symplectic geometry is that if a closed $2$-form, defined on the total space of
 a fibration $M\to B$, restricts to a symplectic form on the fibres, then it admits a symplectic connection,  \emph{i.e.} a connection whose parallel transport preserves the fibred symplectic form.
   In particular, up to completeness issues, the fibres can be identified as symplectic manifolds and the fibration can be regarded as a \emph{symplectic fibration}, \emph{i.e.} a locally trivial fibration with fibre type a symplectic manifold $(F,\omega_F)$.

In fact, the equations for such a form on $M$ to be closed have even stronger restrictions: they force the
 fibration to reduce  to a \emph{hamiltonian fibration}. Roughly speaking, there is a
  $\text{Sp}(F,\omega_F)$-principal bundle $P$  associated with the fibration, to be thought of as the
   \emph{symplectic frame} bundle. It turns out that $P$ admits a reduction to the subgroup
    $\text{Ham}(F,\omega_F)$ of \emph{hamiltonian} symplectomorphisms (say, up to a cover \cite{MD1}).
      Let us emphasize here that each of these facts has its infinitesimal counterpart.

From these simple observations, we obtain a universal way to build such fibrations: one shall consider associated bundles
 $M:=P\!\times_G\! F$ where $P$ is a principal bundle with a fixed connection, and where the structure group $G$ acts
  on the fibre type $F$ in a hamiltonian way.

This construction has many applications. In classical mechanics, the procedure describes the  reduction of canonical bundles, and is closely related with the coadjoint orbit hierarchy.  Also, Weinstein \cite{Wein3} recovered in this manner the minimal coupling of Sternberg \cite{Stern}, which describes the dynamics of a classical
    particle in a Yang-Mills field. In fact, many classical Yang-Mills theories can be recovered from this simple procedure.
     From this perspective, note that it is the form on the total space that will play the role of a "field".

A fundamental problem closely related to these questions is to exhibit obstructions to the existence of a closed extension. Namely,
 given closed forms varying smoothly on the fibres, a closed extension is a closed form defined on the total space that restricts to the forms prescribed fibre-wise. 
  Here, the difficulty is to ensure the closedness of this form. Heuristically, such obstructions
   shall be responsible for "anomalies" in physical situations. Notice that from both the mathematical and the  physical point of view,
    this happens when the topology of the total space does not allow the presence of a specific geometric structure (in our case, the closed $2$-form).

The aim of this work is to look at similar questions in the case of $3$-forms. One of the motivations is an attempt to obtain the simplest possible model
 for higher Yang-Mills theories.

The paper is organised as follows: first we need an infinitesimal approach to the Leray-Serre spectral sequence. For this, we will use an Ehresmann connection in order to decompose de Rham differential of the total space into a sum $\d=\delta_{0,1}+\delta_{1,0}+\delta_{2,-1}$.
 Here $\delta_{i,j}$ are differential operators of bi-degree $(i,j)$ with respect to a decomposition $\Omega^k(M)=\oplus\ \Omega^p(B,\Omega^q(\Ver))$
 induced by the connection, where $\Ver$ denotes the vertical bundle. This will be the basic tool of this paper and, as we shall explain, will help understanding
 the closedness equations as symmetries of structures defined fibre-wise. Note that in this work, the notion of "coupling" essentially refers to how the different
 components of a form assemble in order to define a closed one. 

The second section is devoted to the study of closed $2$-forms. Using the above decomposition, we will first review the classical theory of symplectic fibrations.
 Our approach here is designed to carry easily to both the study of presymplectic fibrations, and the generalization for $3$-forms we are aiming at.
  At this stage, we shall be able to point out obstructions in the third De Rham cohomology of the base.   Working in local coordinates,
   we will recover a $2$-connection with values in a crossed module and; we will explain how such phenomenons appear when dealing with \emph{non equivariant} moment maps.

We will then move to the non-symplectic case. In that situation, there is no reason for the existence of a connection whose parallel transport preserves the
 forms fiber-wise. However, when a closed extension exists we shall see that, though the forms may not be preserved, the \emph{prequantization} structure they induce on the fibers is. This will lead to a re-reading of the closedness equations for which the $2$-connection picture still makes sense.

In the last section, we will see how these constructions generalize to the case $3$-forms. First we will focus on at a basic notion of $2$-plectic fibration, then look at arbitrary closed $3$-forms. In that case, it is the structure of exact Courant algebroid defined fibre wise which is preserved, rather than the fibred $3$-form itself. Here we shall recover, partially but in a striking way, the analogue of a $3$-connection with values in a $2$-crossed module.

%%%%%%%%%%%%%%%%%%%%%%%%%%%%%%%%%%%%%%%%%%%%%%%%%%%%%%%%%%%%%%%%%%%%%%%%%%%%%%%%%%%%%%%ù%%%%%
\section{The Leray-Serre spectral sequence}\label{Sec:Leray:Serre}%%%
%%%%%%%%%%%%%%%%%%%%%%%%%%%%%%%%%%%%%%%%%%%%%%%%%%%%%%%%%%%%%%%%%%%%%%%%%%%%%%%%%%%%%%%%%%%%%
For a submersion $p:M\to B$, the Leray-Serre spectral sequence is induced by a filtration  
$${C^k=F_0 C^{k}\supset \dots\supset F_qC^{k}\supset F_{q+1}C^{k}\supset\dots\supset F_{k+1}C^{k}=\{0\}}$$
 defined on the complex of differential forms  by:
\begin{equation}\label{filtration}
F_q C^k:=\bigl\{\alpha\in\Omega^k(M)\ \bigl|\ i_v\alpha=0\quad \forall v\in\Gamma(\Lambda^{k-q+1}\Ver)\bigr\}.
 \end{equation}
Here $\Ver:=\ker \d p$ denotes the vertical bundle. In practice, this allows to break down cohomological computations into several
 steps by making finer and finer truncations. This construction is well known, nevertheless it remains a rather abstract process from a general perspective, and,
 depending on technical assumptions, there may be various approaches leading to slightly different interpretations (see the comments in \cite[Th.1]{GLWS}).

In this work, we want a first approach that will emphasize the infinitesimal point of view, this in order to keep track of the differential equations.
 For this purpose, the use of connections turns out to be very convenient as we explain below.

Yet for our construction to hold \emph{stricto sensu}, we need to assume that we are dealing with a \emph{complete} fibration. In fact, by \emph{fibration},
 we mean a smooth submersion that admits a complete Ehresmann connection. Note however that the decomposition \eqref{eq:deltadecomposition} makes sense in a much broader context,
  as well as the filtration and the spectral sequence, provided one deals with abstract quotients.

Recall briefly that an Ehresmann connection \cite{Ehr} is defined as a smooth splitting  $TM=\Ver\oplus \Hor$. We shall denote $h:\X(B)\to \Gamma(\Hor)$
 the horizontal lift, where $h(u)$ is defined as the unique section of $\Hor$ that $p_*$-projects to $u$, and $C\in\Omega^2(B,\Gamma(\Ver))$
 the curvature:
$$C(u,v):=\bigl[h(u),h(v)\bigr]-h\bigl([u,v]\bigr),\quad  u,v\in\X(B).$$
Given a connection, we can decompose $k$-forms on $M$ into several components, involving vertical and horizontal factors.
 More precisely, one may first consider the dual decomposition $T^*M=\Ver^0\oplus \Hor^0$ and make identifications $\Hor^0\simeq \Ver^*$ and $\Ver^0\simeq \Hor^*$.
 Then using the horizontal lift, we get an isomorphism $\Gamma(\Hor^*)\simeq \Omega^1(B)\otimes C^\infty(M)$ given by $\alpha\mapsto\langle\alpha, h(-)\rangle$. This
 identification naturally extends to the space of differential forms. We  obtain an isomorphism:
\begin{equation}\label{eq:filtration:forms}\Omega^k(M)\simeq \bigoplus_{p+q=k}\Omega^{p,q},
\end{equation}
where $\Omega^{{p,q}}:=\Omega^p(B)\otimes\Omega^q(\Ver)$. More precisely, the $(p,q)$-component
 $\Theta^{_{(p,q)}}\in\Omega^{p,q}$ of a $k$-form $\Theta\in\Omega^k(M)$ is given by:
\begin{equation}\label{splitkforms}\big\langle\Theta^{_{(p,q)}}(v),w\big\rangle  
                                        =\big\langle\Theta,h(v){_\wedge} w\big\rangle, \quad v\in\Gamma(\wedge^pTB),\ w\in \Gamma(\wedge^q \Ver).
\end{equation}
It will be meaningful for us to think of an element in $\Omega^{p,q}$ as a $p$-form on $B$ with values in $q$-forms on the fibres,
 so we shall write $\Omega^p(B,\Omega^q(\Ver))$ rather than $\Omega^{p,q}$. Note that under the identification
 \eqref{eq:filtration:forms} the filtration \eqref{filtration} reads:
$$F_q C^{k}=\bigoplus_{q\leq p\leq k}\Omega^p(B, \Omega^{k-p}(\Ver)). $$
The decomposition \eqref{eq:filtration:forms} allows to make rather explicit computations, as the de Rham differential on $M$ decomposes accordingly into a sum:
\begin{equation*}\label{eq:deltadecomposition}\d=\delta_{0,1}+\delta_{1,0}+\delta_{2,-1}.\end{equation*}
Detailed computations for this fact may be found in \cite{Br}. Here $\delta_{i,j}$ has bi-degree $(i,j)$ and can be described as follows:
\begin{itemize}
\item first recall that there is a well defined \emph{vertical} De Rham operator $\d_V:\Omega^k(\Ver)\to \Omega^{k+1}(\Ver)$ given by the usual formula:
\begin{multline*}%\label{eq:ver:derham}
 (\d_V\eta)(v_0\dots v_n)=\sum_{i=0\dots p}(-1)^{i}\Lie_{v_i}\eta(v_1\dots\widehat{v_i}\dots v_p)\\
+\sum_{i<j}(-1)^{i+j}\eta([v_i,v_j],v_1\dots\widehat{v_i}\dots\widehat{v_j}\dots v_p),
\end{multline*}
where $\eta\in\Omega^k(\Ver)$ and $v_i\in \Gamma(\Ver)$. Then $\delta_{0,1}^{_{(p,q)}}:\Omega^{p,q}\to \Omega^{p,q+1}$ naturally extends the vertical
 De Rham differential by tensor product: $\delta^{_{(p)}}_{0,1}=(-1)^p\d_V\otimes \id$.  As we think of $\Theta$ as a $p$-form with values in $q$-forms,
 we shall write $$\delta_{0,1}^{_{(p,q)}}\Theta^{p,q}=(-1)^p \d_{V}\circ\Theta^{p,q}.$$
\item the operator $\delta_{1,0}^{_{(p,q)}}:\Omega^{p,q}\to \Omega^{p+1,q}$ is the covariant derivative $\partialH$
       associated to the connection. Recall that for an Ehresmann connection, this is an operator
 $\partialH:\Omega^p(B,\Omega^q(\Ver))\to\Omega^{p+1}(B,\Omega^q(\Ver))$ given by the following formula, where $v_1\dots v_n\in\X(B)$:
\begin{multline*}\label{eq:covariant:derivative}
(\partialH \Lambda)(v_0\dots v_p)
=\sum_{i=0\dots p}(-1)^{i}\Lie_{h(v_i)} \bigl(\Lambda(v_1\dots\widehat{v_i}\dots v_p)\bigr)\\
+\sum_{i<j}(-1)^{i+j}\Lambda([v_i,v_j],v_1\dots\widehat{v_i}\dots\widehat{v_j}\dots v_p),
\end{multline*}
Note in particular that for $\Lambda\in \Omega^k(\Ver)$ we have $(\partialH \Lambda)(u)=\Lie_{h(u)} \Lambda$ so  that $\partialH$ measures the parallel
 transport of the connection. Recall also that $\partialH\circ\partialH\neq 0$ unless the curvature vanishes.
 \item the operator $\delta_{2,-1}^{_{(p,q)}}:\Omega^{p,q}\to \Omega^{p+2,q-1}$ is obtained by anti-symmetrizing the contraction with the curvature. More precisely:
\begin{multline*}(\delta_{2,-1}^{_{(p,q)}}\Theta^{_{(p,q)}})(v_0\dots v_{p+1})\\
=(-1)^{p}\sum_{i<j}(-1)^{i+j} i_{C(v_i,v_j)}\Theta^{_{(p,q)}}(v_0\dots \widehat{v_i}\dots\widehat{v_j}\dots v_{p+1}).\end{multline*}
\end{itemize}

In the sequel, we will write $\partialV, \partialH$ and $\partialC $ as  generic notations for respectively
 $\delta_{1,0}^{_{(p,q)}},\delta_{0,1}^{_{(p,q)}}$ and $\delta_{2,-1}^{_{(p,q)}}$ in any degree. A useful diagram to picture the situation
 is the following:\vspace{-1cm}
$$\xymatrix@dl@R=30pt@C=27pt{  & \cdots                                                  &\cdots             & \cdots         \\
 \cdots\ar[r] &\Omega^{p,q+1}\ar[r]^{\partialH} \ar[u]   \ar[drr]^>>>>>>{\partialC}       & \Omega^{p+1,q+1} \ar[r]^{\partialH}\ar[u]&\Omega^{p+2,q+1}\ar[r]\ar[u] &\cdots \\
  \cdots\ar[r]&\Omega^{p,q} \ar[r]^{\partialH} \ar[u]^{\partialV}  \ar[drr]^>>>>>>{\partialC}         &\Omega^{p+1,q} \ar[u] \ar[r]&\Omega^{p+2,q}\ar[r]\ar[u]^{\partialV} &\cdots\\
 \cdots\ar[r] &\Omega^{p,q-1}  \ar[r]^{\partialH} \ar[u]^{\partialV}       & \Omega^{p+1,q-1} \ar[r]\ar[u]&\Omega^{p+2,q-1} \ar[r]\ar[u]^{\partialV} &\cdots \\
              & \cdots \ar[u]                                           &\cdots\ar[u]&\cdots\ar[u]}$$ 

By using the bi-grading, the equation $\d\circ\d=0$ easily yields the following commutations relations. These can be read as $\partialH$
 inducing a representation up to homotopy of $\X(B)$ on the vertical De Rham complex:
\begin{equation}\label{eq:commutation:relations}\left\{\begin{array}{lcr}
 \partialV\circ \partialV                              &=&0,\\
 \partialV\circ \partialH+\partialH\circ \partialV    \hspace{53pt}&=&0,\\
  \partialV\circ \partialC \hspace{0.5pt}+\partialH\circ\partialH +\partialC\circ \partialV   &=&0,\\
 \hspace{47pt}\partialH \circ \partialC + \partialC\circ \partialH&=&0,\\
 \hspace{93pt}\partialC\circ \partialC \ &=&0.
               \end{array}\right.
\end{equation}
Let us now try to portray the general situation by working out the usual procedure of a spectral sequence. Firstly, we simply obtain the $E_0$ term as
 the complex $E_0^{p,q}=\Omega^{p,q}$ with $\partialV$ as coboundary. Thus cochains in the $E_1$ term
 get identified with forms in $B$ with values in the vertical de Rham cohomology and we have $E_1^{p,q}=\Omega^p(B,H^q(\Ver))$. In the sequel,
 we  will denote $[\omega]_1\in E_1^{p,q}$ the class of a coboundary $\omega\in \Omega^p(B,\Omega^q(\Ver))$. Notice that one can really think
 of $H^\bullet(\Ver)$ as a vector bundle over $B$ as, by the completeness assumption $H^\bullet(\Ver)$ is easily seen to be \emph{locally} trivial over $B$.

By construction, the differential operator on $E_1$ is induced by the connection. In fact, the second equality in
 \eqref{eq:commutation:relations} clearly states that $\partialH$ factors out to a connection on  $H^\bullet(\Ver)$.
 Furthermore, it immediately follows from the third equality in \eqref{eq:commutation:relations} that the induced connection is \emph{flat}, so that $H^\bullet(\Ver)$
 behaves like a \emph{flat vector bundle} over $B$. In terms of Lie algebroids, this means that $H^\bullet(\Ver)$ is a representation of $TB$. Then
 the $E_2$ term is the associated cohomology. For this reason, we will denote $E_2^{p,q}$ by $H^p(B,H^q(\Ver))$ and $[\omega]_2\in E_2^{p,q}$
 the class of a coboundary $[\omega]_1\in E_1^{p,q}$. In fact, one can easily check that this representation is canonical, in the sense that
 is independent of the choice of the connection.

Of course, going further in the spectral sequence requires more effort. Instead, we will introduce geometric structures in the picture in order 
 to get a better insight. For now on, let us give, as a first application, a proof of a classical Theorem due to Thurston:
\begin{theorem}\label{thm:closed:extension}
 Let $M\xrightarrow{p} B$ be a fibration, and $\Theta_V$ a smooth family of closed $k$-forms on the fibres. Then $\Theta_V$ admits a closed extension if and only if there exists a cohomology class
 $[\Lambda]\in H^k(M)$ whose restriction to the fibres is $[\Theta_V]_1\in H^k(\Ver)$.
\end{theorem}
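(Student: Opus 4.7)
The plan is to apply the decomposition $\d = \partialV + \partialH + \partialC$ directly; the statement then falls out as a one-line consequence of the bidegree bookkeeping, once the framework is in place.

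For the easy direction, suppose $\Theta \in \Omega^k(M)$ is a closed extension of $\Theta_V$. By definition, the pullback of $\Theta$ to each fibre $M_b$ agrees with $(\Theta_V)_b$; under the identification \eqref{eq:filtration:forms} this says $\Theta^{(0,k)} = \Theta_V$. Hence $[\Lambda] := [\Theta] \in H^k(M)$ is a class whose fibrewise restriction is $[\Theta_V]_1 \in H^k(\Ver)$, as required.

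For the converse, let $\Lambda \in \Omega^k(M)$ be closed with $[\Lambda^{(0,k)}]_1 = [\Theta_V]_1$ in $E_1^{0,k} = \Omega^0(B,H^k(\Ver))$. By the very definition of the $E_1$-class, this equality means that there exists a global $\mu \in \Omega^0(B, \Omega^{k-1}(\Ver))$ with
$$\partialV \mu \;=\; \Lambda^{(0,k)} - \Theta_V.$$
Viewing $\mu$ as a $(k{-}1)$-form on $M$ sitting in bidegree $(0, k{-}1)$, its exterior derivative decomposes as $\d\mu = \partialV\mu + \partialH\mu + \partialC\mu$, the three summands living respectively in bidegrees $(0,k)$, $(1,k{-}1)$ and $(2, k{-}2)$. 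Setting $\Theta := \Lambda - \d\mu$ therefore produces a closed form whose $(0,k)$-component is
$$\Theta^{(0,k)} \;=\; \Lambda^{(0,k)} - \partialV\mu \;=\; \Theta_V,$$
that is, a closed extension of $\Theta_V$.

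The only genuinely nontrivial point is that the primitive $\mu$ exists \emph{globally}; mere fibrewise exactness of $\Lambda^{(0,k)} - \Theta_V$ would not suffice. This is exactly the reason the hypothesis is stated in terms of the $E_1$-class rather than pointwise fibre cohomology: vanishing in $\Omega^0(B, H^k(\Ver))$ means, by definition, global exactness with respect to $\partialV$. The completeness assumption enters here to ensure that $H^\bullet(\Ver) \to B$ is a locally trivial smooth vector bundle, which legitimises the identification of $E_1$ with sections of this bundle and guarantees smooth dependence of the primitive on the base point.
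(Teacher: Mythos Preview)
Your proof is correct and follows essentially the same route as the paper: pick a closed representative $\Lambda$, use the hypothesis to find a vertical primitive $\mu\in\Omega^{k-1}(\Ver)$ for the difference $\Lambda^{(0,k)}-\Theta_V$, and then correct $\Lambda$ by $\d\mu$ so that the $(0,k)$-component becomes $\Theta_V$. Your closing remarks about the global existence of $\mu$ are a welcome elaboration the paper leaves implicit, though note that the justification really runs the other way around: $E_1^{0,k}$ is \emph{by definition} the cohomology of the global complex $(\Omega^{0,\bullet},\partialV)$, and it is the identification of this with smooth sections of the bundle $H^k(\Ver)\to B$ that requires the completeness hypothesis, not vice versa.
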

\begin{proof}
By the assumptions we consider $\Theta_V\in\Omega^k(\Ver)$ with $\partialV\Theta_V=0.$

Assume that $\Theta_V$ admits a closed extension, \emph{i.e.} that there exists $\Lambda\in\Omega^k(M)$ such that  $\d\Lambda=0$ and $\Theta_V=\Lambda_V$,
 where $\Lambda_V:=\Lambda|_{\wedge^k\Ver}$. Then obviously $[\Lambda_{V}]_1=[\Theta_V]_1$ since $\Theta_V=\Lambda_V$.

Conversely, assume that there exists a closed form $\Lambda\in\Omega^k(M)$ such that $[\Lambda_V]_1=[\Theta_V]_1$. This precisely means that
 there exists $\alpha_V\in \Omega^{k-1}(\Ver)$ such that $\Theta_V=\Lambda_{V}+\partialV\alpha_V.$ We use \eqref{eq:filtration:forms} in order to
 identify $\alpha_V$ with an element in $\Omega^{k-1}(M)$ with one single component in $\Omega^{k-1}(\Ver)$, equivalently an element in $\Omega^{k-1}(\Hor^0)$. Then we compute that
 $\d\alpha_V=\partialV\alpha_V\oplus\ \partialH\alpha_V\oplus\partialC\alpha_V\oplus 0\oplus\dots\oplus 0.$ Therefore
$\Lambda+\d \alpha_V=\Lambda_V+\partialV \alpha_V\oplus \dots=\Theta_V\oplus\cdots$

We conclude that $\Lambda+\d \alpha_V$ is indeed a closed extension of $\Theta_V$.\end{proof}

%%%%%%%%%%%%%%%%%%%%%%%%%%%%%%%%%%%%%%%%%%%%%%%%%%%%%%%%%%%%%%%%%%%%%%
\section{Closed 2-forms on a fibration}\label{sec:omega:decomposition}
%%%%%%%%%%%%%%%%%%%%%%%%%%%%%%%%%%%%%%%%%%%%%%%%%%%%%%%%%%%%%%%%%%%%%%
For a $2$-form $\omega\in\Omega^2(M)$ the decomposition \eqref{eq:filtration:forms} reads as follows:
\begin{equation}\label{eq:omega:decomposition}
 \omega=\omega_V\oplus\alpha_H\oplus\omega_H.
\end{equation}
More explicitly, in this expression:
\begin{itemize}
 \item $\omega_V$ is a vertical $2$-form, \emph{i.e.} a $2$-form on the fibres: $\omega_V\in \Omega^2(\Ver)$
 \item $\alpha$ is a form with values in vertical $1$-forms: $\alpha\in\Omega^1(B,\Omega^1(\Ver))$,
 \item $\omega_{H}$ is a $2$-form with values in functions: $\omega_{H}\in\Omega^2(B,C^\infty(M))$.
\end{itemize}
Then the corresponding decomposition for $\d\omega$ yields the equations for $\omega$ to be closed:
\begin{eqnarray}
 \partialV\omega_V\hspace{84pt}&=&0,\label{eq:closed2:1}\\
 \partialH\omega_V+\partialV\alpha_H\hspace{42pt}&=&0,\label{eq:closed2:2}\\
 \partialC \omega_V+\partialH\alpha_H+\partialV\omega_H\hspace{3pt}&=&0,\label{eq:closed2:3}\\
 \partialC\alpha_H+\partialH\omega_H\hspace{2pt}&=&0\label{eq:closed2:4}.
\end{eqnarray}
In particular, the first equations above are easily interpreted:
\begin{itemize}
 \item $\omega_V$ is vertically closed, thus it induces a cohomology class in the vertical De Rham cohomology $[\omega_V]_1\in H^2(\Ver)$ seen as a vector bundle over $B$;
 \item the covariant derivative of $\omega_V$ takes values in exact forms for which $\alpha_H$ provides primitives.
  Here we see that $[\omega_V]_1$ defines a parallel section of $H^\bullet(\Ver)$ as a flat vector bundle.
  More precisely, for any complete vector field $u\in\X(B)$ we obtain by integrating \eqref{eq:closed2:2} the following:
\begin{equation}\label{eq:integration:2form}
 \bigl(\phi^{h(u)}_t\bigr)_*\omega_V=\omega_V+\d_V\Bigl(\,\int_0^t (\phi^{h(u)}_s)_*\alpha_H(u) \d s\Bigr).
\end{equation}
\end{itemize}
The other equations seem more intricate for an interpretation, we will postpone this question to the Section \ref{sec:gaugegeneral2forms},
 and focus on the classical treatment for symplectic fibrations. 
%%%%%%%%%%%%%%%%%%%%%%%%%%%%%%%%%%%%%%%%%%%%%%%%%%%%%%%%%%%%%%%%%%%%%%%%%%
\subsection{Symplectic Fibrations}\label{sec:symplectic:fibrations}%%%%%
%%%%%%%%%%%%%%%%%%%%%%%%%%%%%%%%%%%%%%%%%%%%%%%%%%%%%%%%%%%%%%%%%%%%%%%%%%
In this section, we will review well-known facts concerning symplectic fibrations, formerly studied in \cite{GLWS}, \cite{Wein2}.
 None of the results here is original, except maybe for the Proposition \ref{prop:obstr:omegaclosed2}. Our main concern is to formalize some ideas
  from \cite{GLS} so that the generalization to $3$-forms will be straightforward. We refer the interested reader to \cite{LD}, \cite{MD1} for
   a topological approach, see also \cite{Br1}, \cite{Ts} where such questions are treated using gerbes.

In order to study symplectic fibrations, one approach is to start with a Cech cocycle with values in the group of symplectic diffeomorphisms $\Symp(F,\omega_F)$ for
 a given model $(F,\omega_F)$. There is however a slightly different approach, which is by dealing only with vertical forms and connections. Note that by doing so, we get rid of a specific model $(F,\omega_F)$ from the very beginning.

Consider a fibration $M\to B$ together with a symplectic form varying smoothly on the fibres. More precisely we consider a vertically closed
 non-degenerate $2$-form, namely $\omega_V\in\Omega^2(\Ver)$ satisfying:
\begin{align*}
 \partialV\!\omega_V&=0,\ \ \\
  \omega_V^{\wedge 2n}&\neq 0,
\end{align*}
where $n$ denotes the rank of $\Ver$. In that situation, a \textbf{symplectic connection} is a connection whose parallel transport preserves $\omega_V$:
\begin{equation}\label{eq:sp:connection}
 \partialH\!\,\omega_V=0.
\end{equation}
\begin{definition}A \textbf{symplectic fibration} is a couple $(M\to B,\omega_V)$ as above, such that $\omega_V$ admits a symplectic connection.
\end{definition}
\begin{remark} 
In this definition, we assume implicitly that the connection is complete. Thus one can identify the fibres with each other
 as symplectic manifolds by using the parallel transport, and the condition \eqref{eq:sp:connection} is just the infinitesimal counterpart
 for a fibration to be locally trivial.
\end{remark}
First we look at the case where $\omega_V$ is the restriction of a $2$-form defined on the total space $M$. The following characterization
 will useful for later purposes, the proof is completely tautological and will be left to the reader:
\begin{proposition}\label{prop:fibre:non:degeneracy:2form}
 Let $\omega\in\Omega^2(M)$ be a $2$-form defined on the total space of a fibration $M\xrightarrow{p} B$.
 Then the following assertions are equivalent:
\begin{enumerate}[i)]
\item the restriction of $\omega$ to the fibres of $p$ is non-degenerate, 
\item $\Ver^{\perp\omega}\cap\Ver=\{0\}$,
\item $\Graph(\omega)\cap \Ver\oplus\Ver^0=\{0\}.$
\end{enumerate}
\end{proposition}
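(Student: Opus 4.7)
The plan is to observe that all three conditions are just different repackagings of the statement that the bilinear form $\omega|_{\Ver\times\Ver}$ on each fibre has trivial kernel, so the proof amounts to unwinding definitions and spelling out the identifications once.

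For (i)$\Leftrightarrow$(ii), I would note that the vertical form $\omega|_{\Ver\times\Ver}$ is non-degenerate if and only if its kernel, namely $\{v\in\Ver : \omega(v,w)=0 \text{ for every } w\in\Ver\}$, is trivial. By the very definition of the $\omega$-orthogonal, this kernel is exactly $\Ver^{\perp\omega}\cap\Ver$, so the two conditions coincide word for word.

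For (ii)$\Leftrightarrow$(iii), I would first recall the convention that $\Graph(\omega)$ denotes the image of the musical map $TM\to T^*M$, $v\mapsto i_v\omega$, viewed as a subbundle of $TM\oplus T^*M$, and that $\Ver^0$ is the annihilator of $\Ver$ in $T^*M$. Then a pair $(v,\xi)$ lies in $\Graph(\omega)\cap(\Ver\oplus\Ver^0)$ precisely when $v\in\Ver$ and $\xi=i_v\omega$ annihilates $\Ver$; the latter says $\omega(v,w)=0$ for all $w\in\Ver$, i.e.\ $v\in\Ver^{\perp\omega}$. Hence the intersection is non-trivial exactly when $\Ver\cap\Ver^{\perp\omega}\neq\{0\}$, which is the negation of (ii).

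No genuine obstacle arises here — as the author already remarks, the statement is tautological. The only point worth flagging in the write-up is to fix the conventions for $\Ver^0$ and $\Graph(\omega)$ at the outset, after which each equivalence reduces to a single line of pointwise unpacking.
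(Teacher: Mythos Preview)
Your proposal is correct and is exactly the tautological unwinding the paper has in mind; indeed the paper omits the proof entirely, saying it ``is completely tautological and will be left to the reader.'' Your write-up, with the conventions for $\Ver^0$ and $\Graph(\omega)$ fixed at the outset, is precisely what such a reader would supply.
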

Here, $\Ver^0\subset T^*M$ denotes the annihilator of the vertical bundle.
\begin{definition}
 A $2$-form $\omega\in\Omega^2(M)$ is said to be \textbf{fibre non-degenerate} if one of the conditions
 above is satisfied.
\end{definition}
When a fibre non-degenerate $2$-form is closed, we have the following fundamental observation due to Gotay \emph{et al.} \cite{GLWS}:
%%%%%%%%%%%%%%%%%%%%%%%%%%%%%%%%%%%%%%%%%%%%%%%%%%%%%%%%%%%%%%%%%%%%%%%%%%%%%%%%%%%%%%%%
\begin{proposition}\label{prop:induced:symp:connection}
 Let $\omega\in\Omega^2(M)$ be a closed, fibre non-degenerate  $2$-form defined on $M\to B$. 
 Then $(M\to B,\omega_V)$ admits a symplectic connection with hamiltonian curvature.
\end{proposition}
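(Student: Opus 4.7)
The plan is to exploit the fibre non-degeneracy to single out a canonical horizontal distribution, and then read off the closedness equations \eqref{eq:closed2:1}--\eqref{eq:closed2:4} in this preferred splitting.

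First I would \emph{define} the connection by setting $\Hor := \Ver^{\perp\omega}$. By Proposition \ref{prop:fibre:non:degeneracy:2form}(ii), this subbundle meets $\Ver$ only at zero, and a dimension count shows $TM = \Ver \oplus \Hor$. This is the natural Ehresmann connection carried by a fibre non-degenerate form (completeness must be assumed or checked separately, as with all such constructions in this paper).

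The key observation is that for this particular choice of $\Hor$, the mixed component $\alpha_H$ in the decomposition \eqref{eq:omega:decomposition} vanishes identically: indeed, for $u \in \X(B)$ and $v \in \Gamma(\Ver)$,
\begin{equation*}
\alpha_H(u)(v) = \omega\bigl(h(u), v\bigr) = 0,
\end{equation*}
since $h(u) \in \Hor = \Ver^{\perp\omega}$ by construction. With $\alpha_H = 0$, equation \eqref{eq:closed2:2} reduces to $\partialH \omega_V = 0$, which is exactly the condition \eqref{eq:sp:connection} asserting that the connection is symplectic.

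For the curvature, equation \eqref{eq:closed2:3} collapses to $\partialC \omega_V + \partialV \omega_H = 0$. Unwinding the definition of $\partialC$ on the bi-degree $(0,2)$ component, I get for any $u, v \in \X(B)$ the identity
\begin{equation*}
i_{C(u,v)} \omega_V \;=\; -\,\d_V\bigl(\omega_H(u,v)\bigr)
\end{equation*}
fibrewise. Since $\omega_V$ is fibre non-degenerate, this equation uniquely determines $C(u,v)$ as the hamiltonian vector field on each fibre associated to the function $-\omega_H(u,v) \in C^\infty(M)$. This is precisely the statement that the curvature takes values in hamiltonian vector fields, completing the proof.

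The whole argument is essentially algebraic once the right connection is fixed; the only non-routine step is recognizing that the symplectic orthogonal of $\Ver$ is forced on us by the fibre non-degeneracy, and that this choice simultaneously kills $\alpha_H$ and identifies $\omega_H$ as a generating hamiltonian for the curvature. I would expect the only genuine obstacle, which the paper presumably sweeps into the standing completeness convention, to be ensuring that $\Ver^{\perp\omega}$ actually defines a \emph{complete} Ehresmann connection.
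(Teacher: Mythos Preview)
Your proof is correct and follows essentially the same route as the paper: define $\Hor := \Ver^{\perp\omega}$ via Proposition~\ref{prop:fibre:non:degeneracy:2form}, observe that $\alpha_H$ vanishes for this choice, and read off the symplectic and hamiltonian conditions from \eqref{eq:closed2:2} and \eqref{eq:closed2:3}. The paper also records as a bonus that \eqref{eq:closed2:4} forces $\partialH\omega_H = 0$, but this is not needed for the statement itself.
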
%%%%%%%%%%%%%%%%%%%%%%%%%%%%%%%%%%%%%%%%%%%%%%%%%%%%%%%%%%%%%%%%%%%%%%%%%%%%%%%%%%%%%%%%%%%%
\begin{proof}
By letting $\Hor:=\Ver^{\perp\omega}$ we obtain, according to the proposition \ref{prop:fibre:non:degeneracy:2form}, a vector bundle complementary to $\Ver$ in $TM$,
 therefore a well defined Ehresmann connection. For this choice of horizontal subspace, the component $\alpha_H$ in the decomposition \eqref{eq:omega:decomposition}
 vanishes because of \eqref{splitkforms}.

Then it suffices to set $\alpha_H=0$ in \eqref{eq:closed2:2} and \eqref{eq:closed2:3} to obtain the following:
\begin{align*}
\forall u,v\in\X(B) \quad\Lie_{h(u)}\omega_V=&\ 0,\\
i_{C(u,v)}\omega_V=&-\partialV\omega_H(u,v).
\end{align*}
The first equation precisely states that the connection is symplectic, and the second
that $\omega_H$ prescribes hamiltonian functions for the curvature elements. Note that the closedness condition \eqref{eq:closed2:4} implies
 furthermore that $\omega_H$ shall be $\partialH$-closed.
\end{proof}

Conversely, starting with a symplectic fibration, one may try to refine Thm. \ref{thm:closed:extension} in order to decide whether $\omega_V$
 admits a closed extension.

Once fixed a symplectic connection, one can choose $\alpha_H=0$ to get equation
 \eqref{eq:closed2:2} satisfied. Notice however that we have some freedom in the choice of $\alpha_H$,
 as this equation only requires that it takes values in \emph{closed} $1$-forms, \emph{i.e.}
 that $\alpha_H\in\Omega^1(B,\Omega^1_{cl}(\Ver))$. In fact, this choice may be seen alternatively as varying the connection:

\begin{proposition}\label{symp:connections:affine}%%%%%%%%%%%%%%%%%%%%%%%%%%%%%%%%%%%%%%%%%%%%%%%%%%%%%%%%%%%%%%%%%%%%%%%
 For any symplectic fibration $(M \to B,\omega_V)$, the space of symplectic connections is 
 an affine space modelled on $\Omega^1(B,\Omega^1_{cl}(\Ver))$. 
\end{proposition}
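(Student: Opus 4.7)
The plan is to parameterize Ehresmann connections sharing the prescribed vertical bundle $\Ver$ by their horizontal lifts, and then to translate the symplectic condition \eqref{eq:sp:connection} into an equation on the difference. First, I would observe that if $h_1,h_2:\X(B)\to\Gamma(TM)$ are the horizontal lifts of two connections with the same vertical bundle, then $A(u):=h_2(u)-h_1(u)$ belongs to $\Gamma(\Ver)$ and is $C^\infty(M)$-linear in $u$. By fibrewise non-degeneracy of $\omega_V$ (Proposition \ref{prop:fibre:non:degeneracy:2form}), the contraction $X\mapsto i_X\omega_V$ is an isomorphism $\Gamma(\Ver)\xrightarrow{\sim}\Omega^1(\Ver)$, so $A$ is equivalently encoded as
$$\tilde A\in\Omega^1\bigl(B,\Omega^1(\Ver)\bigr),\qquad \tilde A(u):=i_{A(u)}\omega_V.$$
This already shows that the set of Ehresmann connections with vertical bundle $\Ver$ forms an affine space modelled on $\Omega^1(B,\Omega^1(\Ver))$.

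Next, I would implement the symplectic condition \eqref{eq:sp:connection}, rewritten as $\Lie_{h_i(u)}\omega_V=0$ for all $u\in\X(B)$. Assuming it holds for $i=1$, linearity of the Lie derivative yields
$$\Lie_{h_2(u)}\omega_V=\Lie_{h_1(u)}\omega_V+\Lie_{A(u)}\omega_V=\Lie_{A(u)}\omega_V.$$
Since $A(u)$ is vertical, its flow preserves the fibres, so the Lie derivative of the vertical form $\omega_V$ along it is computed fibrewise via Cartan's formula:
$$\Lie_{A(u)}\omega_V=\partialV\bigl(i_{A(u)}\omega_V\bigr)+i_{A(u)}\partialV\omega_V=\partialV\tilde A(u),$$
using \eqref{eq:closed2:1} in the last equality. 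Therefore $h_2$ is symplectic if and only if $\partialV\tilde A(u)=0$ for every $u\in\X(B)$, which is precisely the statement $\tilde A\in\Omega^1(B,\Omega^1_{cl}(\Ver))$.

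To conclude, non-emptiness of the space of symplectic connections is built into the definition of a symplectic fibration, and the two steps above show that fixing any such $h_1$ identifies the remaining symplectic connections with $\Omega^1(B,\Omega^1_{cl}(\Ver))$ via a bijection that is manifestly affine. I do not anticipate a serious obstacle here; the only point that deserves explicit care is the use of Cartan's formula for $\Lie_{A(u)}\omega_V$, which is legitimate precisely because $A(u)$ is vertical and $\omega_V$ is a fibrewise form, so that the flow of $A(u)$ acts on $\omega_V$ as on a smooth family of forms on fibres.
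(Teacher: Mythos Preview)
Your argument is correct and follows essentially the same route as the paper's proof: both identify the difference of two symplectic horizontal lifts as a $1$-form with values in vertical vector fields preserving $\omega_V$, and then use the fibrewise symplectic isomorphism $i_{-}\omega_V$ to rewrite this as $\Omega^1(B,\Omega^1_{cl}(\Ver))$. Your version is in fact more explicit, spelling out via Cartan's formula why $\Lie_{A(u)}\omega_V=0$ is equivalent to $\partialV\tilde A(u)=0$; the only quibble is that the vertical closedness $\partialV\omega_V=0$ you invoke comes directly from the definition of a symplectic fibration rather than from \eqref{eq:closed2:1}.
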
%%%%%%%%%%%%%%%%%%%%%%%%%%%%%%%%%%%%%%%%%%%%%%%%%%%%%%%%%%%%%%%%%%%%%%%
\begin{proof}
The horizontal lifts $h,h'$ associated to different symplectic connections differ by $h'-h=\Delta$ where
 $\Delta\in\Omega^1(B,\Gamma_{\omega_V}(\Ver))$. Here $\Gamma_{\omega_V}(\Ver)$ denotes the space of vertical vector fields
 that preserve $\omega_V$, \emph{i.e.} satisfying $\Lie_{X_V}\omega_V=0.$ Then, since $\omega_V$ is symplectic it establishes an identification:
$\Omega^1(B,\Gamma_{\omega_V}(\Ver))\simeq\Omega^1(B,\Omega^1_{cl}(\Ver))$
obtained by composition with $i_{_-}{\omega_V}:\Gamma_{\omega_V}(\Ver)\xrightarrow{_\sim}\Omega^1_{cl}(\Ver)$.
\end{proof}

\begin{proposition}\label{prop:obstr:omegaclosed1}%%%%%%%%%%%%%%%%%%%%%%%%%%%%%%%%%%%%%%%%%%%%%%%%%%%%%%%%%%%%%%%%%%%%%%%
Let $(M,\omega_V)$ be a symplectic fibration with connected fibres. Then there is a class $[\partialC\omega_V]_2\in H^2(B,H^1(\Ver))$ which is an obstruction
 to the existence of a symplectic connection with hamiltonian curvature.
\end{proposition}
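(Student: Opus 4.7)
The plan is threefold: verify that $\partialC\omega_V$ is both $\partialV$-closed and $\partialH$-closed so that it genuinely represents a class in $E_2^{2,1}$; show this class is independent of the chosen symplectic connection; and finally observe that hamiltonian curvature forces its vanishing.

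For the first step, I apply the commutation relations \eqref{eq:commutation:relations} to $\omega_V$ together with the two defining identities of a symplectic fibration, $\partialV\omega_V=0$ and $\partialH\omega_V=0$. The third relation yields $\partialV\partialC\omega_V=-\partialH\partialH\omega_V-\partialC\partialV\omega_V=0$, and the fourth gives $\partialH\partialC\omega_V=-\partialC\partialH\omega_V=0$. Hence $\partialC\omega_V$ is vertically closed and its $E_1$-class is $\partialH$-closed, so it descends to a well-defined $[\partialC\omega_V]_2\in H^2(B,H^1(\Ver))$. One may recognise it as the Leray--Serre $d_2$-image of $[\omega_V]_2\in E_2^{0,2}$, which is already a hint that it should be canonical.

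For canonicity, Proposition \ref{symp:connections:affine} lets me compare two symplectic connections by $h'=h+\Delta$ with $\Delta\in\Omega^1(B,\Gamma_{\omega_V}(\Ver))$. A direct computation of brackets gives
\[
  C'(u,v)-C(u,v)=(\partialH\Delta)(u,v)+[\Delta(u),\Delta(v)].
\]
Setting $\beta:=i_\Delta\omega_V\in\Omega^1(B,\Omega^1_{cl}(\Ver))$ and applying the Cartan formula, the identity $\Lie_{h(u)}\omega_V=0$ collapses $i_{(\partialH\Delta)(u,v)}\omega_V$ into $(\partialH\beta)(u,v)$, while $\Lie_{\Delta(u)}\omega_V=0$ rewrites $i_{[\Delta(u),\Delta(v)]}\omega_V$ as $\d_V\bigl(\omega_V(\Delta(v),\Delta(u))\bigr)$, which is $\partialV$-exact. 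Thus $\partialCp\omega_V-\partialC\omega_V$ lies in $\partialH\Omega^1(B,\Omega^1_{cl}(\Ver))+\partialV\Omega^2(B,C^\infty(M))$, so the two classes agree in $E_2^{2,1}$.

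The obstruction property is then immediate: if some symplectic connection has hamiltonian curvature, there is $\omega_H\in\Omega^2(B,C^\infty(M))$ with $\partialC\omega_V=-\partialV\omega_H$, whence $[\partialC\omega_V]_1$ already vanishes at $E_1$, a fortiori in $E_2^{2,1}$. The hypothesis of connected fibres enters precisely here: it guarantees that the fibrewise primitive functions of $i_{C(u,v)}\omega_V$ can be chosen smoothly and assembled into a globally defined $\omega_H$. The main technical step is really the canonicity computation, where one must reorganise the contraction $i_{(\partialH\Delta+[\Delta,\Delta])}\omega_V$ into the advertised sum using only the two preservation identities $\Lie_{h(u)}\omega_V=0$ and $\Lie_{\Delta(u)}\omega_V=0$.
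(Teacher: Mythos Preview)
Your argument establishes that the class is well-defined, independent of the chosen symplectic connection, and \emph{necessary} for the existence of a symplectic connection with hamiltonian curvature. What you have not shown---and what is the substantive content of the proposition as the paper proves it---is the \emph{converse}: that the vanishing of $[\partialC\omega_V]_2$ is \emph{sufficient} for such a connection to exist. Without this direction you have only exhibited a potential obstruction, not the complete one.

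The paper's proof of sufficiency proceeds as follows. If $[\partialC\omega_V]_2=0$, then by definition of $E_2$ there exist $\alpha_H\in\Omega^1(B,\Omega^1_{cl}(\Ver))$ and $\omega_H\in\Omega^2(B,C^\infty(M))$ with
\[
\partialC\omega_V=-\partialH\alpha_H-\partialV\omega_H.
\]
Because $\omega_V$ is fibrewise non-degenerate, one may write $\alpha_H=i_\Delta\omega_V$ for a unique $\Delta\in\Omega^1(B,\Gamma(\Ver))$, and the vertical closedness of $\alpha_H$ forces $\Delta$ to take values in symplectic vertical vector fields. The modified connection $h':=h+\Delta$ is therefore symplectic, and a direct computation---essentially your canonicity computation run in reverse---yields
\[
\partialCp\omega_V=\partialC\omega_V+\partialH\alpha_H+[\alpha_H{}_\wedge\alpha_H]_{\Ver^*}=-\partialV\bigl(\omega_H+\omega_V(\Delta_\wedge\Delta)\bigr),
\]
so $h'$ has hamiltonian curvature.

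Your canonicity calculation already contains all the ingredients for this step; the missing idea is to invert the logic. Instead of starting from a second symplectic connection and showing the class is unchanged, start from the vanishing of the class, extract $\alpha_H$ (hence $\Delta$) from it, and use $\Delta$ to \emph{construct} the desired connection. Incidentally, your remark about connected fibres is misplaced: in the necessity direction the existence of a smooth global $\omega_H$ is part of the hypothesis ``hamiltonian curvature'', not something to be proved.
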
%%%%%%%%%%%%%%%%%%%%%%%%%%%%%%%%%%%%%%%%%%%%%%%%%%%%%%%%%%%%%%%%%%%%%%%%%%
\begin{proof}
 First we use the commutation relations \eqref{eq:commutation:relations} to check that:
 \begin{equation*}\partialV\circ\partialC\omega_V=-\partialH\circ\partialH\omega_V-\partialC\circ\partialV\omega_V=0.
 \end{equation*}
 This ensures that $[\partialC\omega_V]_1$ is a well defined element in $\Omega^2(B,H^1(\Ver))$.

 Using \eqref{eq:commutation:relations} again, we obtain
$\partialH\circ\partialC \omega_V=-\partialC\circ \partialH\omega_V=0$, therefore $[\partialC\omega_V]_1\in\Omega^2(B,H^1(\Ver))$ is a cocycle and the corresponding class
 $[\partialC\omega_V]_2\in H^2(B,H^1(\Ver))$ is well defined.

Assume now that this class vanishes; this precisely means that there exists $\alpha\in\Omega^1(B,\Omega^1_{cl}(\Ver))$ such that
\begin{equation*}[\partialC\omega_V]_2=\partialH[-\alpha_H]_2.
 \end{equation*}
The above equality lies in $\Omega^2(B,H^1(\Ver))$ so $\partialC\omega_V $ and $\partialH \alpha_H$ need only to
 coincide up to some $\partialV\omega_H$, where $\omega_H\in\Omega^2(B,C^\infty(M))$. In conclusion, the vanishing of $[\partialC\omega_V]_2$
 is equivalent to the existence of $\alpha_H$ and $\omega_H$ satisfying $\partialC \omega_V=-\partialH\alpha_H-\partialV\omega_H$ and $\partialV\alpha_H=0$.

Consider now the connection $h':=h+\Delta$ corresponding to $\alpha_H$ as in the Proposition \ref{symp:connections:affine} by letting $\alpha_H:=i_{\Delta}\omega_V$.
 Notice that $h'$ is symplectic because $\alpha_H$ takes values in closed $1$-forms. Moreover $h'$ has curvature $C'=C+\partialH\Delta+[\Delta_\wedge\Delta]$
 where $[\Delta_\wedge\Delta](X,Y):=[\Delta(X),\Delta(Y)]$. Then the following computation shows that $C'$ takes values in hamiltonian vector fields,
 with hamiltonian prescribed by $\omega_H'=\omega_H+\omega_V(\Delta_\wedge\Delta)$:
\begin{align*}\partialCp\omega_V=&\partialC{\omega_V}+\partialH\alpha_H+[{\alpha_H}_\wedge\alpha_H]_{\Ver^*}\\
                                  =&-\partialV\omega_H + [{\alpha_H}_\wedge\alpha_H]_{\Ver^*}\\                                 
                                  =&-\partialV(\omega_H+\omega_V(\Delta_\wedge\Delta)).
\end{align*}
Here $[{\alpha_H}_\wedge \alpha_H]_{\Ver^*}$ is defined using the Poisson bracket induced on the space of vertical $1$-forms induced by $\omega_V$, and we used
 the general fact that the bracket of closed $1$-forms is exact. More precisely for any $u,v\in \X(B)$, we have
 $$[{\alpha_H}_\wedge\alpha_H]_{\Ver^*}(u,v):=[\alpha_H(u),\alpha_H(v)]_{\Ver^*}=\partialV \omega_V(\Delta(u),\Delta(v)).$$\end{proof}
%%%%%%%%%%%%%%%%%%%%%%%%%%%%%%%%%%%%%%%%%%%%%%%%%%%%%%
%%%%%%%%%%%%%%%%%%%%%%%%%%%%%%%%%%%%%%%%%%%%%%%%%%%%%%
\begin{proposition}\label{prop:obstr:omegaclosed2}%%%%%%%%%%%
%%%%%%%%%%%%%%%%%%%%%%%%%%%%%%%%%%%%%%%%%%%%%%%%%%%%%%
Consider  a symplectic fibration $(M\xrightarrow{p}B,\omega_V)$ with connected fibres, and assume that it admits a connection with hamiltonian curvature.
 Then there exists a $2$-form $\omega\in\Omega^2(M)$ extending $\omega_V$ such that $\d \omega=p^*\phi,$ where $\phi$ is a closed $3$-form on $B$. Moreover,
  if $[\phi]\in H^3(B)$ vanishes then $\omega_V$ admits a closed extension.
\end{proposition}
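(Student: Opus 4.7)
The plan is to build $\omega$ by prescribing its three bidegree components explicitly, arrange that only the $(3,0)$-component of $\d\omega$ can survive, and then show that this surviving piece is necessarily basic. Using the symplectic connection, I would set $\alpha_H=0$, and choose $\omega_H\in\Omega^2(B,C^\infty(M))$ to prescribe Hamiltonians for the curvature, as furnished by the hypothesis (exactly as in the proof of Proposition \ref{prop:induced:symp:connection}). Then define $\omega:=\omega_V\oplus 0\oplus\omega_H$ via \eqref{eq:omega:decomposition}; by construction $\omega$ extends $\omega_V$.

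Next I would verify that, with this choice, equations \eqref{eq:closed2:1}, \eqref{eq:closed2:2} and \eqref{eq:closed2:3} hold automatically: the first is part of the symplectic fibration data, the second because $\partialH\omega_V=0$ and $\alpha_H=0$, and the third is precisely the hamiltonian-curvature condition $\partialV\omega_H=-\partialC\omega_V$. Consequently $\d\omega$ is concentrated in bidegree $(3,0)$ and equals $\partialH\omega_H\in\Omega^3(B,C^\infty(M))$.

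The key step is then to prove that $\partialH\omega_H$ is basic, i.e.\ the pullback of some $\phi\in\Omega^3(B)$. Applying $\partialV$ and chaining the commutation relations \eqref{eq:commutation:relations}, I would compute
$$\partialV\partialH\omega_H=-\partialH\partialV\omega_H=\partialH\partialC\omega_V=-\partialC\partialH\omega_V=0.$$
Thus the value of $\partialH\omega_H$ on any triple of vector fields on $B$ is a $d_V$-closed function on $M$, hence fibrewise constant by connectedness of the fibres, hence pulled back from $B$. This yields the desired $\phi$ with $\partialH\omega_H=p^*\phi$, and closedness $\d\phi=0$ follows from $\d^2\omega=0$ together with the injectivity of $p^*$ on forms.

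For the moreover statement, if $[\phi]$ vanishes write $\phi=\d\psi$ with $\psi\in\Omega^2(B)$; then $\omega':=\omega-p^*\psi$ is closed and still extends $\omega_V$ since $p^*\psi$ has trivial vertical component. I do not anticipate serious obstacles beyond the bidegree bookkeeping for $\d\omega$; all the ingredients (the commutation relations, connectedness of fibres, and the basic-form criterion via $\partialV$) are already in place from Section \ref{Sec:Leray:Serre}.
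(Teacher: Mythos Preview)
Your proof is correct and follows essentially the same approach as the paper's: define $\omega=\omega_V\oplus 0\oplus\omega_H$, verify that only the $(3,0)$-component of $\d\omega$ survives, and use the commutation relations \eqref{eq:commutation:relations} together with connectedness of the fibres to show $\partialH\omega_H$ is basic. The only minor difference is that you deduce $\d\phi=0$ from $\d^2\omega=0$ and injectivity of $p^*$, whereas the paper verifies this by an explicit expansion of $\partialH\circ\partialH\omega_H$ in terms of $\omega_V$ and the curvature; your argument is cleaner and equally valid.
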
%%%%%%%%%%%%%%%%%%%%%%%%%%%%%%%%%%%%%%%%%%%%%%%%%%%%%%
%%%%%%%%%%%%%%%%%%%%%%%%%%%%%%%%%%%%%%%%%%%%%%%%%%%%%%
\begin{proof}%%%%%%%%%%%%%%%%%%%%%%%%%%%%%%%%%%%%%%%%%%%%%%%%%%%%%%
Denote by $\omega_H\in\Omega^2(B,C^\infty(M))$ the form specifying hamiltonians for the curvature.
By the assumptions, if we let $\omega=\omega_V\oplus 0 \oplus \omega_H$ then all the equations ensuring $\d \omega=0$ are satisfied except for \eqref{eq:closed2:4}.
Namely, we have:
\begin{align*}
 \partialV \omega_V \hspace{38pt}=&0,\\
 \partialH\omega_V\hspace{38pt}=&0,\\
 \partialC\omega_V+\partialV\omega_H=&0.
\end{align*}
By using successively the commutation relations \eqref{eq:commutation:relations} and the above conditions, we obtain
$\partialV\circ\partialH\omega_H
=-\partialH\circ\partialV\omega_H
=\partialH\circ\partialC\omega_V
=\partialC\circ\partialH\omega_V
=0.$ We conclude that $\partialH\omega_H$ takes values in fibre-wise constant functions, so it defines a $3$-form $\phi$ on $B$
 such that $\d\omega=\partialH\omega_H=p^*\phi$.
As the following  computation shows, $\phi$ is necessarily closed, this is a purely combinatorial fact,
 as it follows directly from the definition of $\partialC$ and the anti-symmetricity of $\omega_V$. Indeed we have:
\begin{align*}
&\ (\partialH \circ\partialH\omega_H)(u_0,u_1,u_2,u_3)\\
=&\ (\partialC\circ\partialV\omega_H)(u_0,u_1,u_2,u_3)\\
=&\ \langle C(u_0,u_1),\partialV\omega_H(u_2,u_3)\rangle -\langle C(u_0,u_2),\partialV\omega_H(u_1,u_3)\rangle\\
 &          \ \quad\quad+\langle C(u_0,u_3),\partialV\omega_H(u_1,u_2)\rangle+\langle C(u_1,u_2),\partialV\omega_H(u_0,u_3)\rangle\\
 &\ \quad\quad\quad\quad -\langle C(u_1,u_3),\partialV\omega_H(u_0,u_2)\rangle +\langle C(u_2,u_3),\partialV\omega_H(u_0,u_1)\rangle\\
=&\ \omega_V( C(u_0,u_1),C(u_2,u_3)) -\omega_V( C(u_0,u_2),C(u_1,u_3))\\
 &\ \quad\quad     +\omega_V(C(u_0,u_3),C(u_1,u_2))+\omega_V( C(u_1,u_2),C(u_0,u_3)) \\
&\   \quad\quad\quad\quad -\omega_V( C(u_1,u_3),C(u_0,u_2))+\omega_V(C(u_2,u_3),C(u_0,u_1))=0.
\end{align*}

Assume now that the class $[\phi]=[\partialH\omega_H]\in H^3(B)$ vanishes. Then $\phi=\d \omega_B$ where $\omega_B\in\Omega^2(B)$ and
 we can replace $\omega_H$ by $\omega_H+p^*\omega_B$ to get all equations \eqref{eq:closed2:1}-\eqref{eq:closed2:4} satisfied.\end{proof}

Notice that it requires rather strong topological constraints on the fibration for the last obstruction class not to vanish, as $[\phi]$ shall define an element in the kernel of the map $p^*:H^3(B)\to H^3(M)$ induced in cohomology.

\begin{remark}\label{rem:final:obstruction:2form}%%%%%%%%%%%%%%%%%%%%%%%%%%%%%%%%%%%%%%%%%%%%%%%%%%%%%%%%%%%%%%%%%%%%%%%
The Propositions \ref{prop:obstr:omegaclosed1} and  \ref{prop:obstr:omegaclosed2} are rather basic results, that has subtle refinements.
 For instance, when $p$ has $1$-connected fibres, the first obstruction (Prop. \ref{prop:obstr:omegaclosed1})
 vanishes of course. More surprisingly, if we assume moreover that the fibre type $F$ is compact then the second obstruction
 (Prop. \ref{prop:obstr:omegaclosed2}) also vanishes, this was proved by Guillemin-Lerman-Sternberg using an averaging argument \cite[Thm 1.4.1]{GLS}.
 \comment{see also  \cite[Lem 1.6]{MD1}.}

If we do not assume the fibres to be simply connected, the vanishing of the first obstruction is a difficult problem in practice;
this was studied by McDuff-Lalonde from a topological point of view in \cite{LD} for compact fibres, see also \cite{MD1}.

 In the context of gerbes, Brylinsky \cite{Br1} looked at such problems (see also Tsemo \cite{Ts}). \comment{precisely \cite[Thm 2.6.6]{Ts}}
Roughly speaking, gerbes get involved when trying to glue principal bundles defined over the fibre type. Assuming that the fibred $2$-form is integral, Brylinky obtained in this way
 a class in $H^3(B,\mathbb{Z})$.
\comment{Tsemo gives an obstruction as an element in $H^2(B,H^1(F,\mathbb{R})/H^1(F,P_{\omega_F}))$}
\comment{Yet, it is not sooo clear to relate precisely this obstruction with Prop. \ref{prop:obstr:omegaclosed1} and \ref{prop:obstr:omegaclosed2}

 Maybe should we think of a class in $H^2(B,H^1(F,\mathbb{R})/H^1(F,P_{\omega_F}))$ being somehow represented by 
an element in $H^3(B, something)$ which is 'integral' in the sense that it integrates along cycles always lead to elements in $H^1(F,P_{\omega_F}))$,
 on the lines of \cite[p. 321]{LD}  

(In fact, here I would like to reproduce the argument that $H^2(B,\mathbb{R}/\mathbb{Z})\simeq H^3(B,\mathbb{Z})$ whose elements can be represented by integral
 De Rham classes, \emph{i.e.} a usual De Rham class $\phi\in H^3(B,\mathbb{R})$ so that $\int_C\phi\in\mathbb{Z}$ for all cycle)}%We will poursue the infinitesimal analogue of this result in the next section, exhibiting the situation of Proposition \ref{prop:obstr:omegaclosed1} as a higher connection.
\end{remark}%%%%%%%%%%%%%%%%%%%%%%%%%%%%%%%%%%%%%%%%%%%%%%%%%%%%%%%%%%%%%%%%%%%%%%%

\subsection{Weakly Hamiltonian fibrations as 2-connections}\label{sec:local:symplectic}
As we just pointed out, the Proposition \ref{prop:obstr:omegaclosed2} has integral counterparts in gerbe theory. From our approach, one may expect some sort of an higher analogue of  connection. It is indeed the case, but we need first to introduce some 
 terminology.
\begin{definition}
 A \textbf{weakly hamiltonian fibration} is a symplectic fibration $(M\to B,\omega_V)$ together with a connection that has Hamiltonian curvature.
 Such a connection will be said to be \textbf{hamiltonian} if the $2$-form $\omega_H$ prescribing the hamiltonian functions can be chosen so that $\partialH\omega_H=0$.
\end{definition}
Let us take a closer look at weakly hamiltonian fibrations. By the definition, the following conditions hold:
\begin{align}
0&=\partialH\omega_V,\label{eq:ham:curv1}\\
0&=\partialC\omega_V+\partialV\omega_H,\label{eq:ham:curv2}
\end{align}
where $\omega_H\in\Omega^2(B,C^\infty(M))$. Furthermore this setting implies, according to the proposition  \ref{prop:obstr:omegaclosed2}, the existence of a $3$-form $\phi\in\Omega^3(B)$ such that:
\begin{align}
\hspace{50pt}\partialH\omega_H&=p^*\phi\label{eq;ham:curv3}.
\end{align}
As we will explain in this section, the above equations correspond in fact to those of a so-called $2$-connection with vanishing fake curvature. Before this, we shall first recall some basic notions concerning crossed modules.

\subsubsection{Connections with values in a crossed module}

\begin{definition} A crossed module of groups consists into a couple of groups $G$ and $H$, together with a morphism $t:H \to G$ and a left action $G\to \text{Aut}(H)$ of $G$ on $H$, such that the following conditions are satisfied:
\begin{align*}
t(g \cdot h)&= g t(h) g^{-1}, \\
t(h)\cdot h'&=h h' h^{-1},
\end{align*}
for any $h,h'\in H,\ g\in G$.
\end{definition}

Crossed modules are categorified versions of groups, as they encode a (strict) $2$-group, \emph{i.e.} a category internal to the category of groups \cite{BaezLauda}. The infinitesimal counterpart of a crossed module of groups is a crossed module of Lie algebras:
\begin{definition} A crossed module of Lie algebras is given by a
a pair of Lie algebras $\mathfrak{g}$ and $\mathfrak{h}$, together with a morphism $\d t: \mathfrak{h}\to \mathfrak{g}$, and a Lie algebra morphism $\triangleright:\mathfrak{g}\to \Der(\mathfrak{h})$ such that the following conditions are satisfied:
\begin{align*}
\d t ( \xi \triangleright \eta)&=[x, \d t (\eta)],\\
\d t (\eta) \triangleright \eta'&=[\eta,\eta'],
\end{align*}
for any $\eta,\eta'\in \mathfrak{h},\ \xi\in\mathfrak{g}$.
\end{definition}

As is the case for crossed modules of groups, crossed modules of Lie algebras encode a category internal to Lie algebras \cite{BaezCrans}. There is also a categorical version of principal bundles with structure a $2$-group, as well as connections for such bundles (see Breen-Messing \cite{BrMe} for more details. In this work though, we will only use basic such notions, see for instance the work of Baez-Huerta-Shreiber \cite{BaezYM}, \cite{BS1}, \cite{BH}).
 
Given a crossed module of Lie algebras, the simplest instance of a $2$-connection $(a,b)$ can be described locally by a $\g$-valued $1$-form $a$ and an $\h$-valued $2$-form $b$. The curvature of $(a,b)$ is then defined as a couple $(F,\phi)$, where: 
$$F:=\d t(b)+\d a+ a_\wedge a$$
 is called the \emph{fake curvature}, while the curvature $3$-form $\phi$ is given by:
$$\phi:=\d_a b=\d b+a_\wedge b.$$

 Such objects shall be understood as ``categorical connections'', the basic idea is that the holonomy shall be defined along surfaces rather than paths. 
In this context, the vanishing of the fake curvature is an important feature as it ensures the existence of a well defined holonomy,  to be understood as a $2$-functor defined on the path $2$-groupoid of the base manifold (see the work of Schreiber-Waldorf \cite{Schreiber-Waldorf1},\cite{Schreiber-Waldorf2} and Martins-Picken \cite{MartinsPicken1},\cite{MartinsPicken2}).

\subsubsection{Weakly hamiltonian fibrations as $2$-connections}

 As we shall see in this work, the closure equation for a $2$-form can be recovered in that language. In the case of a weakly hamiltonian fibration, one may
 proceed as follows. First assume that $B\simeq \mathcal{U}$ is an open set identified with $\mathbb{R}^n$, with coordinates $x=(x_1,\dots,x_n)$, we denote $\partial_i$ the corresponding vector fields. 

We need first to build a trivialization for which $\omega_V$ is independent of the basic variables. This is rather trivial but let us give more details, as these will be relevant later on. Since we assume the connection to be complete, we can use the holonomy along straight lines through $0$ (say, the trajectories of the radial vector field $r=\sum_ix_i\partial_i$ on $\mathbb{R}^n$) in order to identify $p^{-1}(\U)$ with $\simeq F\times \U$, where $F:=p^{-1}(\{0\})$. Since $(\partialH\omega_V)(r)=0$ we see that $\omega_V$,
 which is \emph{\`a priori} a $2$-form on $F$ with parameters in $\mathbb{R}^n$, turns out to be independent of $(x_1,\dots,x_n)$ and we obtain a trivialization with $\omega_V\simeq\omega_F$.  In this trivialization,  the horizontal lifts of the connection write:
$$h(\partial_i)\loc\partial_i+a_i,\text{ where }a_i:\U \to \X(F).$$
Furthermore, the curvature is locally given by:
\begin{align*}
 C(\partial_i,\partial_j)&:=   [h(\partial_i),h(\partial_j)]-h([\partial_i,\partial_j])\\
                         & \loc   [\partial_i+a_i, \partial_j+a_j ]\\
                         & =    \partial_i a_j-\partial_j a_i+[a_i,a_j].
\end{align*} A common notation for the last equality is simply $C\loc\d a+ a_\wedge a$, where $a:=\sum_i a_i\d x_i\in \Omega^1(U,\X(F))$
 is the local connection form.
Then the invariance condition  \eqref{eq:ham:curv1} of $\omega_V$ by the connection reads:
\begin{align*}
(\partialH\omega_F)(\partial_i)=0
\iff&(\partial_i+\Lie_{a_i})(\omega_F)=0\\
\iff& \Lie_{a_i} \omega_F=0.
\end{align*}
Here, we see that the local connection form takes values in \emph{symplectic} vector fields.
 Furthermore, taking local coefficients for $\omega_H\loc\sum_{i,j} b_{i,j}\d x_i{_\wedge}\d x_j$
 we observe that $b=\sum_{i,j}b_{i,j}\d x_i{_\wedge}\d x_j$ is a $2$-form with values in functions on $F$.

In order to recover a $2$-connection, this suggests to consider the following crossed module of Lie algebras:
\begin{equation*}\label{crossed:module:symplectic}C^{\infty}(F)\xrightarrow{\d t}\X(F)_{\omega_F}.
\end{equation*}
Here the crossed module structure is given as follows:
\begin{itemize}
 \item $C^\infty(F)$ is endowed with the Poisson bracket $\{\ ,\ \}$ induced by $\omega_F$,
 \item $\X(F)_{\omega_F}$ is the space of symplectic vector fields (\emph{i.e.} satisfying $\Lie_X\omega_F=0$) acting on $C^\infty(F)$ in the obvious way: $X\triangleright f:=\Lie_X f$, 
  \item $\d t(f):=\{f,\ \}$ or, equivalently $t(f)=\omega_F^{-1}(\d f)$ where $\omega_V\!:\!TF\!\to T^*F$, $X\mapsto i_X\omega_F$ denotes the contraction.
\end{itemize}
In other words, this crossed module is just the geometric variation for $\g\xrightarrow{\ad} \text{Aut}(\g)$ in the case where $\g=(C^\infty(F),\{\ ,\ \})$.
 With this in mind, we can now express the equation \eqref{eq:ham:curv1} locally as follows:
\begin{align*}
     \bigl(\partialC\omega_V+\partialV\omega_H\bigr)(\partial_i,\partial_j)=0 
\iff&  i_{C_{i,j}} \omega_F+ \d_V\omega_H^{i,j}=0\\
\iff&  C_{i,j}+(\omega_F^{-1})(\d_V\omega_H^{i,j})=0\\
\iff&   \d a + a_\wedge a+t(b)=0.
\end{align*}
Here we obtain the equation \eqref{eq:ham:curv1} as the vanishing of the fake curvature. Finally, writing now the condition \eqref{eq;ham:curv3} locally, we obtain:
\begin{align*}
(\partialH\omega_H)(\partial_i,\partial_j,\partial_k)=p^*\phi(\partial_i,\partial_j,\partial_k)\iff&\oint_{i,j,k} \Lie_{\partial_i+a_i} \omega_H^{j,k}=\phi_{i,j,k}\\
\iff&\oint_{i,j,k} \partial_i \omega_H^{j,k}+ \Lie_{a_i} \omega_H^{j,k}=\phi_{i,j,k}\\
\iff&  \d b +a_\wedge b=\phi.
\end{align*}
In the language of $2$-connections, this means that $\phi=\partialH \omega_H$ is the curvature $3$-form of the $2$-connection $(a,b)$.

We can now summarize the discussion of weakly hamiltonian fibration in terms of $2$-connections in the following way:

\begin{proposition}
Let $(M\to B,\omega, \Hor)$ be a weakly hamiltonian fibration. Then for any $x\in M$, there exists a neighborhood $\mathcal{U}$ and a local coordinate chart $p^{-1}(\mathcal{U})\simeq \mathcal{U}\times F$ for which we obtain the following correspondence:
 \begin{eqnarray*}
 \omega_V \in \Omega^2_{sp}(\Ver)  &\loc& \omega_F \in \Omega^2_{sp}(F),\\
\ \   h\in\Omega^1\bigl(B,\X_B(M)\bigr) & \loc& a\in\Omega^1\bigl(\U,\X(F)\bigr),\\
 \omega_H&\loc& b\in\Omega^2\bigl(\U,C^\infty(F)\bigr).
\end{eqnarray*}
In particular, the operator associated with the connection reads locally:
\begin{eqnarray*}  \hspace{35pt} \partialH\! & \loc &\d + a_\wedge.\end{eqnarray*}

Furthermore, in this chart, the equations \eqref{eq:closed2:2}-\eqref{eq:closed2:4} ruling a weakly hamiltonian fibration are those of a $2$-connection $(a,b)$ with values in $C^\infty(F)\to \X(F)_{\omega_F}$, whose fake curvature vanishes, and with curvature $3$-form $\phi$, namely:
\begin{eqnarray*}
\partialH\omega_V=0\hspace{22pt}&\iff& a\in \Omega^1(\mathcal{U},\X(F)_{\omega_F}),\\
\partialC\omega_V+\partialV\omega_H =0 \hspace{22pt}&\iff& \d a+a_\wedge a + t(b)=0,\\  
\partialH\omega_H=p^*\phi\hspace{10pt}&\iff & \d b+a_\wedge b=\phi.
\end{eqnarray*}
\end{proposition}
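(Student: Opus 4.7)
The plan is to construct the advertised local trivialization using radial parallel transport, and then to verify each of the three correspondences by direct translation through the resulting dictionary. Given $x \in M$, I would pick a coordinate chart $\mathcal{U}\simeq \mathbb{R}^n$ around $p(x)$ and use completeness of the connection to trivialize $p^{-1}(\mathcal{U})\simeq F\times \mathcal{U}$, where $F=p^{-1}(\{0\})$, by parallel transporting the fibre over $0$ along the flow of the radial vector field $r=\sum_i x_i \partial_i$. By construction, the horizontal distribution contains the $r$-direction of the product, so $h(\partial_i)\loc \partial_i + a_i$ for some $a_i:\mathcal{U}\to \X(F)$, which assembles into a local connection form $a\in \Omega^1(\mathcal{U},\X(F))$.

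Next I would verify that $\omega_V$ becomes base-independent. Since $(\partialH\omega_V)(r)=0$ by \eqref{eq:ham:curv1}, integration along radial lines shows that the pullback of $\omega_V$ along the parallel transport is preserved, so under the trivialization $\omega_V$ agrees with a fixed symplectic form $\omega_F$ on $F$. Similarly $\omega_H$, being already a section of $\Omega^2(B,C^\infty(M))$, gets local coefficients $b_{ij}\in C^\infty(\mathcal{U}\times F)$ and packages into $b \in \Omega^2(\mathcal{U}, C^\infty(F))$. A direct unfolding of the definition of $\partialH$ in this trivialization yields $\partialH \loc \d + a_\wedge$, which is a routine computation.

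With the dictionary in place, each equation is a tautological translation. The condition $\partialH \omega_V = 0$ reads $(\partial_i + \Lie_{a_i})\omega_F = 0$, hence $\Lie_{a_i}\omega_F = 0$, which means precisely that $a$ takes values in $\X(F)_{\omega_F}$. The condition $\partialC\omega_V + \partialV\omega_H=0$ reads componentwise as $i_{C_{ij}}\omega_F + \d_V b_{ij} = 0$; using the non-degeneracy of $\omega_F$ this is equivalent to $C_{ij} + t(b_{ij}) = 0$, i.e.\ to $\d a + a_\wedge a + t(b)=0$, the vanishing of the fake curvature. Finally, $\partialH\omega_H = p^*\phi$ expands as $\oint_{i,j,k}\Lie_{\partial_i + a_i}b_{jk}=\phi_{ijk}$, giving $\d b + a_\wedge b = \phi$, so $\phi$ is the curvature $3$-form of $(a,b)$.

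The main obstacle is really just the first paragraph: one has to set up the radial trivialization carefully enough that $\omega_V$ becomes independent of the base coordinates and the horizontal lifts take the stated form. Once that dictionary is fixed, everything else is an unwinding of the bi-graded de Rham operator into coordinate expressions, together with the crossed-module identifications $t(f)=\omega_F^{-1}(\d f)$ and $\xi\triangleright f = \Lie_\xi f$.
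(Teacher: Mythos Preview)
Your proposal is correct and follows essentially the same route as the paper: radial parallel transport to trivialize and render $\omega_V$ base-independent, then a direct translation of each of the three equations through the dictionary $h\loc \partial + a$, $C\loc \d a + a_\wedge a$, $\omega_H\loc b$. The only minor wobble is the sentence ``the horizontal distribution contains the $r$-direction of the product, so $h(\partial_i)\loc \partial_i + a_i$'': the form $h(\partial_i)=\partial_i+a_i$ holds in \emph{any} trivialization (it just expresses that $h(\partial_i)$ projects to $\partial_i$), while the radial-transport property is what makes $\omega_V$ independent of $x$; but this does not affect the argument.
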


\begin{remark}
For instance, the Proposition \ref{prop:obstr:omegaclosed2} appears now as a well-known fact, namely that if the fake curvature vanishes, then the curvature $3$-form takes values in $\ker \d t$  (in our case $\ker \d t=\mathbb{R}$, as $F$ is assumed to be connected).
\end{remark}

\subsection{Classical Yang-Mills fields revisited}\label{sec:Classical:YangMills}

In this section, we shall generalize a universal construction for hamiltonian fibrations  due to Weinstein \cite{Wein3},  in order to obtain weakly hamiltonian fibrations. The Theorem \ref{thm:classical:yangmills} is a fundamental result, as it gives  a geometrical context to the so-called minimal coupling \cite{Stern}. See also \cite{GS} for other applications. 

\begin{theorem}\label{thm:classical:yangmills}
 Consider the following setting:
\begin{itemize}
 \item a principal $G$-bundle $P\to B$, with a principal connection $\theta$;
 \item an hamiltonian space $G \circlearrowright(F,\omega_F)\xrightarrow{J}\g^*$.
\end{itemize}
Then the associated bundle $M:=P{\scriptstyle \mathop{\times}\limits_{G}}F$ comes equipped with a closed
 $2$-form that fibres symplectically over $B$.
\end{theorem}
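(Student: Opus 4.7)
The plan is to build the required $2$-form explicitly by specifying its three components in the decomposition \eqref{eq:omega:decomposition} and then verifying the closedness equations \eqref{eq:closed2:1}--\eqref{eq:closed2:4}. First I would push the principal connection $\theta$ forward to an Ehresmann connection on $M=P\times_G F\to B$: at $[p,f]\in M$, the horizontal subspace is declared to be the image of $\Hor_p(P)\times\{0_f\}$ under the quotient map $P\times F\to M$. With this choice, the vertical bundle $\Ver$ identifies fibrewise with $TF$, and a direct computation shows that the curvature of the induced Ehresmann connection at $u,v\in T_bB$ is $C(u,v)=(\Omega(u,v))_F$, where $\Omega\in\Omega^2(B,\ad P)$ is the curvature of $\theta$ and $\xi_F$ denotes the fundamental vector field of $\xi\in\g$ on $F$.

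With this connection fixed, I would take as candidate the form whose components are
\[
\omega_V:=\omega_F\text{ on }\Ver,\qquad \alpha_H:=0,\qquad \omega_H:=\langle J,\Omega\rangle\in\Omega^2(B,C^\infty(M)),
\]
where $\omega_H$ is well-defined because $J$ is $G$-equivariant, so the pairing $\langle J,\Omega\rangle$ descends along the quotient $P\times F\to M$. Then \eqref{eq:closed2:1} is the closedness of $\omega_F$; \eqref{eq:closed2:2} is the $G$-invariance of $\omega_F$ read at the infinitesimal level (the horizontal lifts on $M$ are induced by $\theta$, and $G$ acts symplectically); and \eqref{eq:closed2:3} is precisely the moment map identity $i_{\xi_F}\omega_F=\d\langle J,\xi\rangle$ applied to $\xi=\Omega(u,v)$.

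The step I expect to be most delicate is \eqref{eq:closed2:4}, which reduces (since $\alpha_H=0$) to $\partialH\omega_H=0$. Unwinding the definition of $\partialH$ applied to the $2$-form $\langle J,\Omega\rangle$, one has to combine two inputs: the Bianchi identity $\d^\nabla\Omega=0$ for the induced connection on $\ad P$, and the fact that $G$-equivariance of $J$ translates geometrically into the section $\widetilde J$ of the coadjoint bundle $P\times_G\g^*$ being \emph{covariantly constant} along horizontal directions. A Leibniz-type argument then gives $\partialH\langle J,\Omega\rangle=\langle\d^\nabla\widetilde J,\Omega\rangle+\langle\widetilde J,\d^\nabla\Omega\rangle=0$, completing the verification.

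An alternative global approach, which bypasses the equation-by-equation check, is to work first upstairs on $P\times F$: form the $1$-form $\Lambda:=\langle J,\theta\rangle$ by pairing the moment map with the connection, and set $\tilde\omega:=\pi_F^*\omega_F-\d\Lambda$. This is tautologically closed. One then checks that $\tilde\omega$ is $G$-basic for the diagonal action (invariance follows from the equivariance of $J$ and $\theta$, horizontality is exactly the moment map identity), so $\tilde\omega$ descends to a closed $2$-form on $M$. Its restriction to any fibre recovers $\omega_F$, so fibre-wise symplecticity is automatic.
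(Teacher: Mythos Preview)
Your main approach---pushing $\theta$ to an Ehresmann connection on $M$, setting $\omega_V=\omega_F$, $\alpha_H=0$, $\omega_H=\langle J,\Omega\rangle$, and checking \eqref{eq:closed2:1}--\eqref{eq:closed2:4}---is exactly the paper's proof, only with more detail on the step $\partialH\omega_H=0$ (the paper simply asserts that this follows from equivariance of $J$, without writing out the Bianchi/covariant-constancy argument). Your alternative global argument via $\tilde\omega=\pi_F^*\omega_F-\d\langle J,\theta\rangle$ on $P\times F$ is precisely what the paper calls ``the usual construction obtained by reduction'' and defers to the references \cite{Wein3,MaSa,GLS,GS}.
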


\begin{proof} The usual construction can be obtained by reduction, see \cite{Wein3}, \cite[Thm. 6.17]{MaSa}, \cite[ex. 2.2]{GLS}, \cite[Prop. 35.5]{GS} for more details. Here we will give a proof using connections, in order to emphasize the approach. 

First of all, notice that $\theta$ induces an Ehresmann connection on the associated bundle. On the other hand, the $G$-invariance of $\omega_F$
 ensures that there is a well-defined vertical symplectic form $\omega_V$ locally equivalent to $\omega_F$. It also responsible for $\omega_V$ to be invariant
  by the connection, so that $\partialH\omega_V=0$.

The term $\omega_H$ is obtained by pairing the moment map with the curvature:
\begin{equation}\label{eq:classical:yang:mills:cocurvature}\omega_H:=\langle J,\omega_\theta\rangle \in\Omega^2(B,C^\infty(M)).\end{equation}
The connection induced by $\theta$ is easily seen to have hamiltonian curvature, and we  recover the condition $\partialC\omega_V+\partialV\omega_H=0$. Furthermore, the equation $\partialH\omega_H=0$ follows from the equivariance of $J$.
\end{proof}

Let us now emphasize the following crucial points:
\begin{itemize}
 \item  it is the equivariance condition of the moment map $J$ which is responsible for the equation $\partialH\omega_H=0$ to hold,
 \item in the case where $J$ is not assumed to be equivariant, the argument used above in order to \emph{define} $\omega_H$ does not even hold.
\end{itemize}
In order to obtain a weakly hamiltonian fibration, this suggests to perform the  construction of Thm. \ref{thm:classical:yangmills} using a \emph{non-equivariant} moment map. Recall that for a non-equivariant map $J: F\to \g^*$, it is only required that:
$$-i_{\xi_F}\omega_F=\d \langle J,\xi \rangle,\quad\forall\in\g.$$ 

In that situation, $\Lambda^J(\xi,\eta):=\{\langle J,\xi\rangle,\langle J,\eta\rangle\}- \langle J,[\xi,\eta]\rangle$ defines a $2$-cocycle $\Lambda^J:\wedge^2 \g \to \mathbb{R}$. Here $\{\ ,\ \}$ denotes the Poisson bracket induced by $\omega_F$ on the space of functions,  and we assumed for simplicity that $F$ is connected; we refer for instance to  \cite{MaSa} and \cite{Ab-Mars} for these basic facts. More explicitly, $\Lambda^J$ as defined above has values in locally constant functions and satisfies:
$$\oint_{\eta,\xi,\mu}\Lambda^J([\eta,\xi],\mu)=0.$$
Stated in a different language, this means that $(J^*,\Lambda^J)$ defines a
$L_\infty$-morphism $\bigl(0\to \g\bigr)\longrightarrow \bigl(\mathbb{R}\to C^\infty(F)\bigr)$ between $2$ terms $L_\infty$-algebras:
$$\xymatrix@C=20pt@R=20pt{ 0 \ar[r]\ar[d]\ar[r]   & \mathbb{R}\ar[d]                &                                           & \mathbb{R}  \\
            \g \ar[r]^>>>>{J^*} & C^\infty(F)                              &              \wedge^2\g\ar[ur]^{\Lambda^J}    }      $$
where $\{J^*,J^*\} \xrightarrow{\Lambda^J} J^*[\ ,\ ]$ plays the role of the homotopy,  and $J^*:\g\to C^\infty(F)$ denotes the dual map to $J$. In view of section \ref{sec:local:symplectic}, it is quite remarkable that  non-equivariant moment maps force one to consider $\g$ and $C^\infty(F)$ as $2$-algebras, as opposed to  equivariant moment maps, which are usual Lie algebra morphisms $J^*:\g\to C^\infty(F)$. 

\begin{ex}\label{app:non-equivariant} Consider a central extension of Lie algebras $\mathfrak{z}\hookrightarrow \mathcal{E} \twoheadrightarrow \g$, and a coadjoint orbit $F\subset \mathcal{E}^*$. Then any splitting $\g\hookrightarrow \mathcal{E}$ produces a non-equivariant moment map $J:=\sigma^\star|_F$ obtained by restricting  $\sigma^*: \mathcal{E}^*\to \g^*$. The simplest relevant example is given by the non-trivial central extension of $se_2\ltimes \mathbb{R}^2$, with a coadjoint orbit $F$ that does not lie in $\g^*\subset \mathcal{E}^*$.
\end{ex}
Back to the problem of weakly hamiltonian fibrations, we can prove:
\begin{proposition}
 Consider the following setting:
\begin{itemize}
 \item a principal $G$-bundle $P\to B$, with a principal connection $\theta$;
 \item an symplectic action with non-equivariant map $G \circlearrowright(F,\omega_F)\xrightarrow{J}\g^*$.
\end{itemize}
Then the associated bundle $M:=P{\scriptstyle \mathop{\times}\limits_{G}}F$ admits a structure of weakly hamiltonian fibration.
\end{proposition}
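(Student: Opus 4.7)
The plan is to mimic the proof of Theorem \ref{thm:classical:yangmills}, replacing the global formula $\omega_H = \langle J, \omega_\theta\rangle$ — which relies on equivariance to descend from $P\times F$ to $M$ — by a local version glued with a partition of unity pulled back from $B$. The point is that the weakly Hamiltonian condition requires only \eqref{eq:ham:curv1} and \eqref{eq:ham:curv2}, which are both affine conditions whose inhomogeneous term depends only on $\omega_V$ and the connection; they therefore survive any basic convex combination.

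The first half of the argument goes through unchanged. The principal connection $\theta$ induces an Ehresmann connection on $M=P\times_G F$, and the $G$-invariance of $\omega_F$ yields a vertical symplectic form $\omega_V\in\Omega^2(\Ver)$ with $\partialV\omega_V=0$ and (by the same $G$-invariance) $\partialH\omega_V=0$. This establishes \eqref{eq:ham:curv1}.

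For the Hamiltonian curvature equation \eqref{eq:ham:curv2}, pick a trivializing cover $\{U_\alpha\}$ of $B$ for $P$, with local $\g$-valued curvature forms $\omega_\theta^\alpha\in\Omega^2(U_\alpha,\g)$, and set
$$\omega_H^\alpha:=\langle J,\omega_\theta^\alpha\rangle\in\Omega^2(U_\alpha,C^\infty(p^{-1}(U_\alpha))).$$
Even without equivariance, the moment map identity $i_{\xi_F}\omega_F=-d\langle J,\xi\rangle$ holds pointwise, and the local curvature of the induced connection on $M$ reads $C(u,v)\loc(\omega_\theta^\alpha(u,v))_F$. A direct computation then gives $\partialV\omega_H^\alpha+\partialC\omega_V=0$ on $p^{-1}(U_\alpha)$. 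The forms $\omega_H^\alpha$ generally disagree on overlaps — this is exactly where the non-equivariance cocycle $\Lambda^J$ enters — but choosing a partition of unity $\{\rho_\alpha\}$ on $B$ subordinate to $\{U_\alpha\}$, the global form
$$\omega_H:=\sum_\alpha(p^*\rho_\alpha)\,\omega_H^\alpha\in\Omega^2(B,C^\infty(M))$$
still satisfies \eqref{eq:ham:curv2}, because each $p^*\rho_\alpha$ is basic and therefore annihilated by $\partialV$, while $\sum_\alpha p^*\rho_\alpha=1$.

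The main subtle point is conceptual rather than computational: the non-equivariance of $J$ does \emph{not} obstruct \eqref{eq:ham:curv2}, which is insensitive to the fibrewise locally constant ambiguities in $\omega_H^\alpha$, but it does generically obstruct $\partialH\omega_H=0$, since $\partialH$ differentiates in the base directions and sees the transition cocycles. This is precisely why the construction produces a \emph{weakly} Hamiltonian and not a Hamiltonian fibration; in the $2$-connection language of Section \ref{sec:local:symplectic}, the class $[\Lambda^J]$ reappears as a non-vanishing curvature $3$-form $\phi=\partialH\omega_H$.
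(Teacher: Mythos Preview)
Your proof is correct and follows essentially the same route as the paper: build $\omega_V$ and the induced connection from $G$-invariance of $\omega_F$, define local $\omega_H^\alpha=\langle J,\omega_\theta^\alpha\rangle$ in trivializing charts, and glue via a basic partition of unity, exploiting that $\partialV$ kills the $p^*\rho_\alpha$. Your closing remarks on why non-equivariance obstructs $\partialH\omega_H=0$ but not \eqref{eq:ham:curv2} are a helpful addition not spelled out in the paper's version.
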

\begin{proof}
First we build a vertical form $\omega_V$ locally equivalent to $\omega_F$, and invariant by the connection induced on $M$ as in the proof of \ref{thm:classical:yangmills}, so that the equation \eqref{eq:ham:curv1} is satisfied.

 Then there only remains to construct $\omega_H$ satisfying $\partialC\omega_V+\partialV\omega_H=0$. For this, we just use a partition of unity.
More precisely, we may consider an open cover $\{\U\}_{\U\in \C}$ of $B$, with local sections $\U \to P$ and $\Psi_\U:p^{-1}(\mathcal{U})=\mathcal{U}\times F$
 the corresponding local trivialization. Then we denote:
\begin{itemize} 
 \item $a=\sum_i a_i\d x_i\in \Omega^1(\mathcal{U},\g)$ the local connection form of $\theta$,
 \item $\omega_\theta^\U=\sum_{i,j}\bigl(\partial_i a_j-\partial_j a_i +[a_i,a_j]\bigr)\d x_i {}_\wedge \d x_j$ the local curvature $2$-form.
\end{itemize} 
We apply then \eqref{eq:classical:yang:mills:cocurvature} locally, namely we set $\omega_H^{\U}:=\langle \omega_\theta^\U, J \rangle\in \Omega^3(\U,C^\infty(F))$.
 Then  $\omega_H:=\substack{\sum \\ {\scriptscriptstyle \U\in \C}} \rho_\U.\Phi_\U^*( \omega_H^\U)$ satisfies: 
\begin{align*}
 \partialC\omega_V+\partialV\omega_H=& \partialC\omega_V+\partialV\ \sum_{\U\in \C}\rho_\U.\Phi_\U^*( \omega_H^\U),\\
                                     =& \partialC\omega_V+\sum_{\U\in \C}\rho_{\U}.\Phi_\U^*(\d_F \omega_H^\U),\\
                                      =&\partialC\omega_V+\sum_{\U\in \C}\rho_{\U}.\Phi_\U^* \bigl( -i_{\omega_\theta^\U}\omega_F\bigr),\\
                                       =&\partialC\omega_V-\sum_{\U\in \C}\rho_\U.i_{\Phi_\U^*\omega_\theta^\U}\Phi_\U^*(\omega_F)=0.
\end{align*}
Here, we used the fact that the functions $\rho_\U$ are basic, and that $\Phi_\U^*\omega_F=\omega_V$ and $\Phi_\U^*\omega_\theta^\U=C$ by construction.
\end{proof}

\begin{remark}
In \cite{To},  Toppan points out the appearance of "anomalies" in \emph{classical} dynamical systems. The reason for such phenomenons to occur comes from the fact that the symmetries of a classical system may produce conserved Noether charges whose Poisson bracket satisfies a centrally extended version of the original
 symmetry algebra. This is very appealing to us when comparing with the above description of weakly hamiltonian fibrations and the role of non equivariant moment maps.
 Yet it remains unclear to us how relate precisely both approaches.
\end{remark}

\section{Gauge symmetries of a closed 2-form}\label{sec:gaugegeneral2forms}
We saw in the previous section how to interpret the closure equations of a $2$-form in the case of symplectic fibrations. There, the vanishing of the term $\alpha_H$ played a crucial role. One may wonder if there exists similar interpretations in the more general
 situation where the fibres are no more assumed to be symplectic manifolds. In that situation, there is no justification for requiring that $\alpha_H$ to vanish as in Prop. \ref{prop:induced:symp:connection}.

In that case, the first difficulty comes from the equation \eqref{eq:closed2:2} where we have a connection that does not preserve the fibred $2$-forms $\omega_V$. The aim of this section is to explain that, although $\omega_V$ may not be preserved by the parallel transport, there is still a structure attached to $\omega_V$ which is preserved. This is related with Lie algebroids \cite{Pr} and their prequantizations, which we recall now.

 A \emph{Lie algebroid} $A\to M$
 is a vector bundle $A$  together with a map $\sharp:A\to TM$ called the anchor, and a bracket $[\hspace{5pt},\ ]$
 defined on the space $\Gamma(A)$ of sections of $A$ such that for any  $f\in C^\infty(M)$ and $\alpha, \beta \in\Gamma(A)$ of $A$,
 the following conditions hold:
\begin{align*} 
 [\alpha,f\beta]&=f[\alpha,\beta]+\Lie_{\sharp(\alpha)}(f),\\
 [\alpha,\beta]&=-[\beta,\alpha],\\
 { \oint} \bigl[[\alpha,\beta],\gamma\bigr]&=0.
\end{align*}
Recall also the notion of infinitesimal symmetry for a Lie algebroid:
\begin{definition}\label{def:derivation:Liealgebroid} A \emph{derivation}, or \emph{symmetry}, of a Lie algebroid $A\to M$ is an application
$D:\Gamma(A)\to\Gamma(A)$ together with a vector field $X\in\X(M)$ (called the \textbf{symbol} of the derivation) such that for 
any smooth function $f\in C^\infty(M)$ and sections $\alpha,\beta\in \Gamma(A)$, the following conditions are satisfied:
\begin{align*}
   D(f\alpha)            &= f D(\alpha)+(\Lie_X f)\alpha,\\
  D([\alpha,\beta])   &=  [D(\alpha),\beta]+[\alpha,D(\beta)]. \end{align*}\end{definition}
A derivation is the infinitesimal version of a Lie algebroid automorphism: any derivation
determines a vector field on  $A$ whose flow is a Lie algebroid automorphism \emph{et vice versa}
(see \cite[App.]{CrFe2}). We shall denote $\Der(A)$ the space of 
derivations, it carries a natural structure of Lie algebra
 whose bracket is given by the commutator $[D_1,D_2]=D_1\circ D_2-D_2\circ D_1$.
\subsection{Closed 2-forms and prequantization Lie algebroids} %
\label{sec:forms:algebroids}         %
%%%%%%%%%%%%%%%%%%%%%%%%%%%%%%%%%%%%
%%%%%%%%%%%%%%%%%%%%%%%%%%%%%%%%%%%%
%%%%%%%%%%%%%%%%%%%%%%%%%%%%%%%%%%%%
A closed $2$-form can be described in terms of Lie algebroids as a $2$-cocycle with values in the natural representation of $TF$ on $F\times\mathbb{R}$ given by $X\triangleright  f=\Lie_X f$. Concretely, this means that given a closed $2$-form, we obtain a Lie algebroid structure on $A_{\omega_F}:=TF\times\mathbb{R}\to F$ by letting $\sharp(X,f)=X$ and:
\begin{equation*}
  \bigl[(X,f),(Y,g)\bigr]=(\left[X,Y\right],\Lie_X g-\Lie_Y f+\omega_F(X,Y)).
\end{equation*}

In this description, $A_{\omega_F}$ comes equipped with a splitting $TF\hookrightarrow A_{\omega_F}$ of the anchor map. Yet there is a slightly more conceptual approach,
 where one can get rid of this splitting, it is the one of a prequantization Lie algebroid \cite{Cr1,Rog1}:
 
 \begin{definition} A prequantization Lie algebroid is a Lie algebroid extension
\begin{equation}\label{ex:attiyahS1}
\mathbb{R}\times F\hookrightarrow A \twoheadrightarrow TF. 
\end{equation}
\end{definition}

Given a prequantization Lie algebroid, the choice of a splitting $\sigma:TF\to A$ induces a $2$-form $\omega_F\in\Omega^2(F)$ by letting
 $\omega_F(X,Y):=[\sigma(X),\sigma(Y)]-\sigma([X,Y])$. This form turns out to be closed because of the Jacobi identity on $A$. Furthermore, as easily checked, different splittings give rise to \emph{cohomologous} $2$-forms.

 This last point turns out to be crucial for our purposes, and a key point in this discussion. Indeed, a diffeomorphism that does not preserve a $2$-form still induces an isomorphism of the prequantization, provided the De Rham cohomology class of the form is preserved. The significance of \eqref{eq:closed2:2} should now appear clearly:
though the vertical form $\omega_V$ may not be preserved by a connection, the induced  \emph{prequantization structure} on the fibres is.

This fact does not help understanding \eqref{eq:closed2:3} and \eqref{eq:closed2:4} yet.
 To this end, we need to know more about the symmetries
 of prequantization Lie algebroids, which will be the purpose of the next section. 

\begin{remark} Let us briefly recall how prequantizations are related with $S^1$-principal bundles \cite{Cr1,Rog1}. When the group of periods of a $2$-form is a lattice, \emph{i.e.} when
 $\{\int_S \omega_F/[S]\in \pi_2(F)\}=a\mathbb{Z}$ for some $a\in\mathbb{R}$, the Lie algebroid $A_{\omega_F}$ integrates to a transitive Lie groupoid. If $F$ is simply connected, we obtain a $S^1$-principal bundle by considering the source-fiber 
 at a point, acted on by the isotropy $\mathbb{R}/a\mathbb{Z}=S^1$.

Then \eqref{ex:attiyahS1} is the corresponding Atiyah sequence and the choice of a splitting $\sigma$ is equivalent to a principal connection with curvature $\omega_F$. This motivated the terminology of \emph{prequantization} Lie algebroids in \cite{Cr1}. In fact, up to integrability issues, one may always think of a prequantization Lie algebroid as the infinitesimal counterpart of a $S^1$-principal bundle.
\end{remark}
%%%%%%%%%%%%%%%%%%%%%%%%%%%%%%%%%%%%%%%%%%%%%%%%%%%%%%%ùù
%%%%%%%%%%%%%%%%%%%%%%%%%%%%%%%%%%%%%%%%%%%%%%%%%%%%%%%%%ùù
\subsection{Symmetries of a prequantization}
%%%%%%%%%%%%%%%%%%%%%%%%%%%%%%%%%%%%%%%%%%%%%%%%%%%%%%%%ù
%%%%%%%%%%%%%%%%%%%%%%%%%%%%%%%%%%%%%%%%%%%%%%%%%%%%%%%ùù
In this section, we shall take a closer look at the space of symmetries of a prequantization Lie algebroid. Note that by 'symmetries', we mean here derivations as in the definition \ref{def:derivation:Liealgebroid}. Our description will closely follow the treatment  of \cite{BCG} for exact Courant algebroids. We shall prove the following:
%%%%%%%%%%%%%%%%%%%%%%%%%%%%%%%%%%%%%%%%%%%%%%%%%%%%%%%ùù%%%%%%%%%%%%%%%%%%%%%%%%%%%%%%%%%%%%%%%%%%%%%%%%%%%%%%%ùù
%%%%%%%%%%%%%%%%%%%%%%%%%%%%%%%%%%%%%%%%%%%%%%%%%%%%%%%ùù%%%%%%%%%%%%%%%%%%%%%%%%%%%%%%%%%%%%%%%%%%%%%%%%%%%%%%%ùù
\begin{theorem}\label{thm:der:2form}%%%%%%%%%%%%%%%%%%%%%%%%%%%%%%%%%%%%%%%%%%%%%%%%%%%%%%%ùù
The space of derivations of a prequantization Lie algebroid $F\times\mathbb{R}\hookrightarrow A \twoheadrightarrow TF$ fits into an exact sequence of Lie algebras:
 \begin{equation}\label{seq:der:2form:infalgebras}
   \Omega^1_{cl}(F)\hookrightarrow  \Der(A)\twoheadrightarrow\X(F).
 \end{equation}
More precisely, given a splitting of the anchor $\sigma:TF\to A$, we denote $\omega_F=[\sigma,\sigma]-\sigma[\ ,\ ]$ the corresponding closed $2$-form.  
There is an identification:
\begin{equation}\label{id:der:2form}\Der(A)\simeq\Bigl\{(X,\alpha)\in \X(F)\ltimes\Omega^1(F)|\Lie_X\omega_F=\d\alpha\Bigr\},
 \end{equation}
such that the Lie algebra structure on $\Der(A)$ and its infinitesimal action on $A \simeq TF \mathop{\ltimes}\limits_{\omega_F} \mathbb{R}$
respectively take the form:\vspace{-5pt}
\begin{align}
\bigl[(X,\alpha),(Y,\beta)\bigr]=&\bigl([X,Y],\Lie_X\beta-\Lie_Y\alpha \bigr),\\
\label{eqn:split:derivation:action:2form}(X,\alpha)\triangleright(Y,g)=&\bigl(\Lie_X Y ,\Lie_X g+\alpha(Y)\bigr).
\end{align}%%%%%%%%%%%%%%%%%%%%%%%%%%%%%%%%%%%%%%%%%%%%%%%%%%%%%%%ùù%%%%%%%%%%%%%%%%%%%%%%%%%%%%%%%%%%%%%%%%%%%%%%%%%%%%%%%ùù
\end{theorem}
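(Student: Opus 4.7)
The plan is to fix a splitting $\sigma: TF \to A$ of the extension $\mathbb{R} \hookrightarrow A \twoheadrightarrow TF$, which realizes $A$ as $TF \oplus \mathbb{R}$ with the bracket twisted by $\omega_F := [\sigma, \sigma] - \sigma[\,,\,]$, and then to unpack the derivation axioms in these coordinates in order to extract the identification.

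First I would observe that any derivation $D \in \Der(A)$ preserves the canonical subbundle $\mathbb{R} = \ker \sharp$: the Leibniz axiom forces $\sharp \circ D = \Lie_X \circ \sharp$, so $\sharp(D\alpha) = 0$ whenever $\sharp(\alpha) = 0$. Compatibility with the prequantization extension, namely that $D$ acts on $\mathbb{R} \cdot \mathbf{1}$ as the tautological derivation $\Lie_X$, equivalently $D(\mathbf{1}) = 0$, pins down $D(0, g) = (0, \Lie_X g)$. Similarly, the derivation induced by $D$ on the quotient $TF$ must equal $\Lie_X$, since any derivation of the tangent algebroid with vanishing symbol is itself zero (a direct check from $C^\infty(F)$-linearity applied to $fY$). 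Hence on horizontal sections one has $D\sigma(Y) = \sigma(\Lie_X Y) + \alpha(Y)\,\mathbf{1}$ for some $C^\infty(F)$-linear map $\alpha: \X(F) \to C^\infty(F)$, i.e.\ $\alpha \in \Omega^1(F)$, which yields the parametrization on the right of \eqref{id:der:2form} and the action \eqref{eqn:split:derivation:action:2form}.

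The heart of the argument is then translating $D[u, v] = [Du, v] + [u, Dv]$ into a condition on the pair $(X, \alpha)$. Evaluating on $u = (Y_1, 0)$, $v = (Y_2, 0)$ and expanding with the bracket formula on $A$, the naturality identity $\Lie_X [Y_1, Y_2] = [\Lie_X Y_1, Y_2] + [Y_1, \Lie_X Y_2]$, and the standard identity $\d\alpha(Y_1, Y_2) = \Lie_{Y_1}\alpha(Y_2) - \Lie_{Y_2}\alpha(Y_1) - \alpha([Y_1, Y_2])$, everything cancels except the equation $\Lie_X \omega_F = \d\alpha$. Mixed pairs $u = (Y, 0)$, $v = (0, g)$ produce no new constraint, so this is the only condition. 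From here the exact sequence drops out: surjectivity of the symbol map comes from $\alpha := i_X \omega_F$, since $\d \omega_F = 0$ and Cartan's magic formula give $\d(i_X\omega_F) = \Lie_X \omega_F$; the kernel consists of pairs $(0, \alpha)$ with $\d\alpha = 0$, namely $\Omega^1_{cl}(F)$.

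The Lie algebra structure on $\Der(A)$ is the commutator of derivations, and a straightforward unwinding in the parametrization above produces both the stated formula $\bigl[(X, \alpha), (Y, \beta)\bigr] = \bigl([X, Y], \Lie_X \beta - \Lie_Y \alpha\bigr)$ and the action \eqref{eqn:split:derivation:action:2form}. The main obstacle will be the bracket-preservation calculation yielding $\Lie_X \omega_F = \d\alpha$: although the computation is essentially bookkeeping, one must carefully track signs and absorb the twist by $\omega_F$ in the right-hand side. A useful sanity check is to verify that replacing $\sigma$ by $\sigma + \beta$ transforms $\omega_F \mapsto \omega_F + \d\beta$ and $\alpha \mapsto \alpha + \Lie_X \beta$, so the condition $\Lie_X \omega_F = \d\alpha$, the Lie bracket, and the action are all independent of the chosen splitting, as they must be for the exact sequence \eqref{seq:der:2form:infalgebras} to be canonical.
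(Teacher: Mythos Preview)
Your proposal is correct and follows essentially the same route as the paper: fix a splitting to identify $A\simeq TF\ltimes_{\omega_F}\mathbb{R}$, argue that any derivation has the form $(X,\alpha)\triangleright(Y,g)=(\Lie_X Y,\Lie_X g+\alpha(Y))$, and then expand the bracket-compatibility condition to extract $\Lie_X\omega_F=\d\alpha$. The paper carries out that expansion on general pairs $(Y_1,g_1),(Y_2,g_2)$ in one pass, whereas you split into the horizontal case $(Y_1,0),(Y_2,0)$ and the mixed case; this is the same computation organized slightly differently. Your additional remarks---the explicit surjectivity via $\alpha=i_X\omega_F$ and Cartan's formula, the commutator computation, and the sanity check under $\sigma\mapsto\sigma+\beta$---are useful elaborations that the paper omits, but they do not constitute a different strategy.

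One point worth flagging: you invoke ``compatibility with the prequantization extension'' to force $D(\mathbf{1})=0$, treating it as part of the data. The paper's Definition~\ref{def:derivation:Liealgebroid} does not include this condition explicitly, and its proof simply asserts the form of $D$ without discussion. Strictly from the two axioms in that definition one only gets $D(\mathbf{1})=h\cdot\mathbf{1}$ with $h$ locally constant, leading to the condition $\Lie_X\omega_F+h\,\omega_F=\d\alpha$; the extra parameter $h$ disappears exactly when one restricts to derivations preserving the central section (or when $[\omega_F]\neq 0$). So your reading of the intended notion is the right one, and you are being more careful here than the paper itself.
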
%%%%%%%%%%%%%%%%%%%%%%%%%%%%%%%%%%%%%%%%%%%%%%%%%%%%%%%ùù%%%%%%%%%%%%%%%%%%%%%%%%%%%%%%%%%%%%%%%%%%%%%%%%%%%%%%%ùù
\begin{proof}
We first choose $\sigma:TF\to A$ and let $\omega_F$ the corresponding $2$-form, so that we have an identification $A\simeq A_{\omega_F}$. Observe that a derivation $D$
 of $A_{\omega_F}=\Ver\times\mathbb{R}$ always covers an application $Y\mapsto \Lie_{X}Y$ on $\X(F)$, it follows that $D$ is necessarily of the
 form $D=(X,\alpha)$ as in \eqref{eqn:split:derivation:action:2form} for some $\alpha\in \Omega^1(F)$. Then it suffices to look at the conditions on
 $(X,\alpha)$ to be a derivation of the Lie bracket. We compute using \eqref{eqn:split:derivation:action:2form} that:
\begin{align*}
 (X,\alpha)\triangleright \bigl[(Y_1,g_1),(Y_2,g_2)\bigr]
 =\bigl([X,[Y_1,Y_2]],&\ \Lie_X\circ\Lie_{Y_1}g_2-\Lie_X\circ\Lie_{Y_2} g_1\\&\ \ +\Lie_X\omega_F(Y_1,Y_2)+\alpha([Y_1,Y_2])\bigr),\\
 \bigl[(X,\alpha)\triangleright (Y_1,g_1),(Y_2,g_2)\bigr]=\bigl([[X,Y_1],Y_2],&\ \Lie_{[X,Y_1]}g_2-\Lie_{Y_2}\circ\Lie_{X}g_1\\
                                                     &\ \ -\Lie_{Y_2}\alpha(Y_1)+\omega_F([X,Y_1],Y_2)\bigr),\\
\bigl[(Y_1,g_1),(X,\alpha)\triangleright (Y_2,g_2)\bigr]=\bigl([Y_1,[X,Y_2]],&\ \Lie_{Y_1}\circ\Lie_X g_2 -\Lie_{[X,Y_2]}g_1\\
                                                     &\ \ +\Lie_{Y_1}\alpha(Y_2)+\omega_F(Y_1,[X,Y_2]\bigr).
\end{align*}
By collecting the terms that do not cancel with each others, we obtain that $(X,\alpha)$ is a derivation \emph{iff} the following condition holds:\vspace{-6pt}
\begin{multline*}
 \Lie_X\, \omega_F(Y_1,Y_2)-\omega_F([X,Y_1],Y_2])-\omega_F(Y_1,[X,Y_2])\\=\Lie_{Y_1} \alpha(Y_2)-\Lie_{Y_2} \alpha(Y_1)-\alpha([Y_1,Y_2]),
\end{multline*}
which is precisely the expression for $(\Lie_X\omega_F)(Y_1,Y_2)=\d\alpha(Y_1,Y_2)$. The exactness of \eqref{seq:der:2form:infalgebras} then
 easily follows since $(0,\alpha)\in \Der(A)$ \emph{iff} $\d\alpha=0$.\end{proof}
%%%%%%%%%%%%%%%%%%%%%%%%%%%%%%%%%%%%%%%%%%%%%%%%%%%%%%%ùù%%%%%%%%%%%%%%%%%%%%%%%%%%%%%%%%%%%%%%%%%%%%%%%%%%%%%%%ùù
%%%%%%%%%%%%%%%%%%%%%%%%%%%%%%%%%%%%%%%%%%%%%%%%%%%%%%%ùù%%%%%%%%%%%%%%%%%%%%%%%%%%%%%%%%%%%%%%%%%%%%%%%%%%%%%%%ùù
\begin{theorem}\label{thm:adjoint:action:long:exact:sequence:2form}
For a prequantization Lie algebroid $F\times\mathbb{R}\hookrightarrow A \twoheadrightarrow TF$,
 the ``adjoint`` action $\ad:\Gamma(A)\xrightarrow{} \Der(A)$
induces the following exact sequence:\vspace{-2pt}
 \begin{equation*}\label{seq:der:2form}
   H^0(F)\hookrightarrow \Gamma(A) \xrightarrow{\ad} \Der(A)\twoheadrightarrow H^1(F).
 \end{equation*}
More precisely, using the identifications of Thm. \ref{thm:der:2form} the map $\ad$ takes the form:
$\ad_{(X,f)}=\bigl(X,i_X\omega_F-\d f\bigr).$
\end{theorem}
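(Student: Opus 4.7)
The plan is to exploit the concrete description of $\Der(A)$ from Theorem \ref{thm:der:2form} and reduce everything to a direct computation of the bracket on $\Gamma(A)$. Fix a splitting so that $A\simeq TF\ltimes_{\omega_F}\mathbb{R}$; then sections of $A$ are pairs $(X,f)$ and the bracket reads $[(X,f),(Y,g)] = ([X,Y],\Lie_X g-\Lie_Y f+\omega_F(X,Y))$. Rewriting the scalar component as $\Lie_X g + (i_X\omega_F - df)(Y)$ and comparing with the formula \eqref{eqn:split:derivation:action:2form} shows directly that $\ad_{(X,f)}$ corresponds, under the identification of Theorem \ref{thm:der:2form}, to the pair $(X,\, i_X\omega_F-df)$. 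Consistency with the constraint in \eqref{id:der:2form} is just Cartan's magic formula: $\d(i_X\omega_F-\d f)=\d i_X\omega_F=\Lie_X\omega_F-i_X\d\omega_F=\Lie_X\omega_F$, using $\d\omega_F=0$.

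Next I would identify the kernel and cokernel of $\ad$. The kernel consists of $(X,f)$ with $X=0$ and $i_X\omega_F-\d f=0$, i.e. $\d f=0$, which gives exactly the locally constant functions, identified with $H^0(F)$. For the cokernel, I define
\[
 \Phi:\Der(A)\longrightarrow H^1(F),\qquad (X,\alpha)\longmapsto [\alpha-i_X\omega_F].
\]
The cocycle $\alpha-i_X\omega_F$ is closed because $\d(\alpha-i_X\omega_F)=\d\alpha-\Lie_X\omega_F=0$ by the defining equation $\Lie_X\omega_F=\d\alpha$ combined with $\d\omega_F=0$; so $\Phi$ is well defined.

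It then remains to check exactness at $\Der(A)$ and surjectivity onto $H^1(F)$. For the first, the formula of Step~1 gives $\Phi\circ\ad_{(X,f)}=[-\d f]=0$, so the image of $\ad$ lies in $\ker\Phi$. Conversely, if $\Phi(X,\alpha)=0$, choose $f$ with $\alpha-i_X\omega_F=-\d f$; then by the formula $\ad_{(X,f)}=(X,i_X\omega_F-\d f)=(X,\alpha)$, proving the derivation is inner. For surjectivity, given any closed $\alpha\in\Omega^1(F)$, the pair $(0,\alpha)$ automatically satisfies $\Lie_0\omega_F=0=\d\alpha$, hence is a derivation mapping to $[\alpha]$.

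The only mildly delicate step is the bookkeeping in the initial computation of $\ad_{(X,f)}$, where the sign and the position of $i_X\omega_F$ must be arranged to match the convention \eqref{eqn:split:derivation:action:2form}; once the formula $\ad_{(X,f)}=(X,i_X\omega_F-\d f)$ is in place, the rest of the argument is a straightforward cohomological unwinding powered by Cartan's formula and the closedness of $\omega_F$.
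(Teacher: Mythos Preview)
Your proof is correct and follows essentially the same approach as the paper: compute $\ad_{(X,f)}$ by matching the bracket formula against \eqref{eqn:split:derivation:action:2form}, then define the projection to $H^1(F)$ and read off exactness. The paper's proof is much terser (it simply writes the identity $\ad_{(X,f)}(Y,g)=(X,i_X\omega_F-\d f)\triangleright(Y,g)$ and declares ``exactness easily follows''), whereas you spell out the kernel, the well-definedness of $\Phi$, and exactness at $\Der(A)$ in full; your sign convention $[\alpha-i_X\omega_F]$ versus the paper's $[i_X\omega_F-\alpha]$ is of course immaterial.
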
%%%%%%%%%%%%%%%%%%%%%%%%%%%%%%%%%%%%%%%%%%%%%%%%%%%%%%%ùù%%%%%%%%%%%%%%%%%%%%%%%%%%%%%%%%%%%%%%%%%%%%%%%%%%%%%%%ùù
%%%%%%%%%%%%%%%%%%%%%%%%%%%%%%%%%%%%%%%%%%%%%%%%%%%%%%%ùù%%%%%%%%%%%%%%%%%%%%%%%%%%%%%%%%%%%%%%%%%%%%%%%%%%%%%%%ùù%%%%%%%%%
\begin{proof}
Once chosen a splitting $\sigma$, we simply have to check that 
${\ad_{(X,f)}(Y,g)=}$ ${(\Lie_X Y,\Lie_X g-\Lie_Y f+ \omega_F(X,Y))=(X,i_{X}\omega_F-\d f)\triangleright (Y,g).}$

We obtain a map $\Der(A)\to H^1(F)$ by letting $(X,\alpha)\mapsto [i_X\omega_F-\alpha]$ and exactness easily follows.
\end{proof}

\begin{remark}
Notice that the formula \eqref{eqn:split:derivation:action:2form} suggests to write a derivation $(X,\alpha)$ in the following matricial form:
$$(X,\alpha)=\left[\begin{matrix}
\Lie_X &  0 \\
\alpha & \Lie_X
\end{matrix}\right].$$
This makes it easily observed that $(X,\alpha)$ preserves the splitting \emph{iff} $\alpha=0$, in which case $\Lie_X\omega_F=0$
 (recall that $(X,\alpha)$ is submitted to the condition $\Lie_X\omega_F=\d\alpha$). Also, note that inner derivations take the matricial form
$$\ad_{(X,f)}\simeq
 \left[\begin{matrix}
\Lie_X &  0 \\
 i_X\omega_F-\d f& \Lie_X
\end{matrix}\right]$$
so the action of $(X,f)\in \Gamma(A)$ preserves the splitting \emph{iff} $i_X\omega_F=\d f$. In particular, if $X$ vanishes then $f$ shall be a (locally) constant function.
\end{remark}
\subsection{Fibred prequantizations}\label{sec:fibrewisedprequantization}
Given a fibration $M\to B$, the discussion of the previous section makes sense fibre-wise.
 One may consider fibred prequantization Lie algebroids,
 defined as  extensions of the vertical bundle:
$$ M\times\mathbb{R}\hookrightarrow A_V \twoheadrightarrow \Ver.$$
Then by choosing a splitting $\sigma_V:\Ver\hookrightarrow A_V$, we obtain an identification $A_V\simeq \Ver\times\mathbb{R}$, and a vertically closed $2$-form
 $\omega_V:=[\sigma_V,\sigma_V]-\sigma_V[\ ,\ ]$. Under this identification, the anchor becomes $\sharp(X_V,f)=X_V$  and the bracket reads:
\begin{equation*}
 \bigl[(X_V,f),(Y_V,g)\bigr]=\bigl(\left[X_V,Y_V\right],\Lie_{X_V} g-\Lie_{Y_V} f+\omega_V(X_V,Y_V)\bigr).
\end{equation*}
Furthermore, if $\sigma_V,\sigma_V':\Ver\hookrightarrow A_V$ are different splittings, then the corresponding $2$-forms $\omega_V$, $\omega'_V$ give
 \emph{cohomologous} vertical $2$-forms $\omega_V-\omega'_V=\partialV(\sigma_V-\sigma'_V)$.
 Recall now that $[\omega_V]_1$, seen as a section of $H^2(\Ver)\to B$, defines an element in the spectral sequence. In fact, we have by construction:
 %%%%%%%%%%%%%%%%%%%%%%%%%%%%%%%%%%%%%%%%%%%%%%%%%%%%%%%% 
\begin{theorem}\label{thm:prequantization:spectral:element}%%%%%%%%%%%%%%%%%%%%%%%%%%%%%%%
For a complete fibration $M\to B$, the space of isomorphism classes of fibred prequantizations identifies with $E_1^{0,2}=H^2(\Ver)$.%%%%%%%%%%%%%%%%%%%%%%%%%%%%%%%%%%%%%%%%%%%%%%%%%%%%%%%%%%%%%%
\end{theorem}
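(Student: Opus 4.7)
The plan is to construct an explicit bijection by sending a fibred prequantization $M\times\mathbb{R}\hookrightarrow A_V\twoheadrightarrow\Ver$ to the class $[\omega_V]_1\in H^2(\Ver)$, where $\omega_V=[\sigma_V,\sigma_V]-\sigma_V[\,,\,]$ arises from any splitting $\sigma_V:\Ver\hookrightarrow A_V$. The first step, well-definedness on isomorphism classes, already follows from the material preceding the statement: two splittings of the same $A_V$ differ by an element of $\Omega^1(\Ver)$ and produce vertical $2$-forms differing by $\partialV(\sigma_V-\sigma'_V)$, and any isomorphism $A_V\to A'_V$ of extensions transports a splitting of the target to a splitting of the source returning the same $2$-form.

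For surjectivity I would take any cocycle $\omega_V\in\Omega^2(\Ver)$ with $\partialV\omega_V=0$ and define $A_V:=\Ver\oplus(M\times\mathbb{R})$ with anchor $\sharp(X_V,f):=X_V$ and bracket
\begin{equation*}
\bigl[(X_V,f),(Y_V,g)\bigr]:=\bigl([X_V,Y_V],\,\Lie_{X_V}g-\Lie_{Y_V}f+\omega_V(X_V,Y_V)\bigr).
\end{equation*}
Skew-symmetry and Leibniz are immediate, while the Jacobi identity unfolds exactly to the equation $\partialV\omega_V=0$; the obvious inclusion and projection make $A_V$ a fibred prequantization, and the tautological splitting $X_V\mapsto(X_V,0)$ recovers $\omega_V$.

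For injectivity, suppose $A_V$ and $A'_V$ are sent to the same class. Choosing splittings gives vertically closed $\omega_V,\omega'_V$ with $\omega_V-\omega'_V=\partialV\beta$ for some $\beta\in\Omega^1(\Ver)$. Using the two splittings to identify $A_V\simeq\Ver\times\mathbb{R}\simeq A'_V$, the natural candidate for an isomorphism of extensions is $\Phi(X_V,f):=(X_V,\,f+\beta(X_V))$; it obviously covers the identity on $\Ver$ and restricts to the identity on $M\times\mathbb{R}$, so only the compatibility with brackets needs to be verified. A direct expansion of $\Phi[(X_V,f),(Y_V,g)]$ against $[\Phi(X_V,f),\Phi(Y_V,g)]'$ collapses, after cancelling Lie-derivative terms, to the condition $\omega_V-\omega'_V=\partialV\beta$, which holds by hypothesis.

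I expect the only genuinely delicate step to be this last one, because one must be careful that the primitive $\beta$ chosen at the level of $2$-forms lifts consistently to an algebroid isomorphism; but as the computation above indicates, the obstruction is a single equation that is precisely what cohomologousness of $\omega_V$ and $\omega'_V$ in the vertical complex $(\Omega^\bullet(\Ver),\partialV)$ is designed to provide, so no further integration step is required and the bijection $[A_V]\leftrightarrow[\omega_V]_1$ follows.
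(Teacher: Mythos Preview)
Your proposal is correct and matches the paper's approach: the paper simply states that the theorem holds ``by construction'' from the preceding discussion (splittings yield vertically closed $2$-forms, different splittings yield cohomologous ones), and you have spelled out exactly the natural details of that claim, including the explicit inverse construction and the isomorphism $\Phi(X_V,f)=(X_V,f+\beta(X_V))$ witnessing injectivity.
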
%%%%%%%%%%%%%%%%%
We now introduce a notion of symmetry for fibred prequantizations:
\begin{definition} A symmetry  $D$ of a prequantization $A_V$ with symbol $X$ is a map $D:\Gamma(A_V)\to \Gamma(A_V)$ satisfying the following conditions:
\begin{align*}
 D(f\alpha)&=f D(\alpha)+\Lie_X(f)\alpha,\\
 D[\alpha,\beta]&=[D\alpha,\beta]+[\alpha,D\beta].
\end{align*}\end{definition}
Considering the space $\Der(A_V)$ of symmetries of a prequantization $A_V$, the first observation is the following:
\begin{proposition}%%%%%%%%%%%%%%%%%%%%%%%%%%%%%%%%%%%%%%%%%%%%%%%%%%%%%%%%%%%%%%%%%%%%%%%%%%%%%%%%%%
The space of derivations of a fibred prequantization comes equipped with a map $\Der(A_V)\to \X(B).$ %%%%%%%%%%%%%%%%%%%%%%%%%%%%%%%%%%%%%%%%%%%%%%%%%%%%%%%%%%%%%%%%%%%%%%%%%%%%%%%%%%%%%%
\end{proposition}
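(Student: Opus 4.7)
The plan is to produce the projection as follows: if $D\in\Der(A_V)$ has symbol $X\in\X(M)$, then $X$ should turn out to be $p$-projectable, and we define the map by sending $D$ to its projection $\bar X\in\X(B)$.

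First, I would establish the standard anchor compatibility of a Lie algebroid derivation: for any $D\in\Der(A_V)$ with symbol $X$, one has
\[
\sharp\bigl(D(\alpha)\bigr)=[X,\sharp(\alpha)],\qquad \alpha\in \Gamma(A_V).
\]
This is a direct consequence of the two defining conditions in Definition \ref{def:derivation:Liealgebroid} combined with the Leibniz rule $[\alpha,f\beta]=f[\alpha,\beta]+\bigl(\Lie_{\sharp(\alpha)}f\bigr)\beta$ of the Lie algebroid: applying $D$ on both sides, using once the derivation property with respect to the bracket and once the derivation property with respect to $C^\infty(M)$-multiplication, and then comparing the coefficients of $\beta$ yields $\Lie_{\sharp(D\alpha)}f=\Lie_{[X,\sharp(\alpha)]}f$ for every $f\in C^\infty(M)$, whence the claimed identity. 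Nothing in this step uses the prequantization structure; it is a general fact about symbols of derivations.

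Second, I would specialize to the fibred prequantization $M\times\mathbb{R}\hookrightarrow A_V\twoheadrightarrow \Ver$, where the anchor factors as $A_V\twoheadrightarrow\Ver\hookrightarrow TM$. The previous identity then forces $[X,Y_V]\in\Gamma(\Ver)$ for every $Y_V\in\Gamma(\Ver)$, since any vertical vector field can be written as $\sharp(\alpha)$ for some $\alpha\in\Gamma(A_V)$ (for instance via any splitting $\sigma_V:\Ver\hookrightarrow A_V$). This commutation condition is precisely the classical criterion for $X\in\X(M)$ to be $p$-projectable, so there exists a unique $\bar X\in\X(B)$ such that $X$ and $\bar X$ are $p$-related.

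Finally, I set $\Der(A_V)\to\X(B)$, $D\mapsto \bar X$. This map is well defined and depends only on $D$, since the symbol is uniquely determined by the derivation. I would not expect any real obstacle here: the only step requiring care is the symbol-anchor compatibility in the first paragraph, which is routine but indispensable. As a bonus observation one can record that the assignment is in fact a morphism of Lie algebras, since the commutator of two derivations has symbol $[X_1,X_2]$, and $p$-projectability is closed under the Lie bracket of vector fields on $M$.
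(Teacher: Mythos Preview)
Your argument is correct and follows essentially the same route as the paper: establish the anchor compatibility $\sharp(D\alpha)=[X,\sharp\alpha]$, use surjectivity of $\sharp$ onto $\Ver$ to conclude that $X$ preserves the vertical bundle, and deduce that $X$ is $p$-projectable. The paper merely states the anchor compatibility as a consequence of the definitions, whereas you spell out how it follows from the Leibniz rule, but the proofs are otherwise identical.
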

\begin{proof}\label{prop:der:fibredpreq:anchor}%%%%%%%%%%%%%%%%%%%%%%%%%%%%%%%
It follows from the definitions of a symmetry that $\sharp_V(D\alpha)=[X,\sharp_V\alpha_V],$ for any $\alpha\in\Gamma(A_V)$. Since $\sharp_V$ is surjective,
 this implies that the symbol $X$ of any symmetry preserves the entire vertical bundle. It follows that $X$ is necessarily $p_*$-projectable.
\end{proof}
Thus, for  fibred prequantizations the flow of a symmetry commutes with $p_*$,  for this reason we shall refer
 to this space as \emph{gauge} symmetries and denote it by  $\Der_B(A_V)$
 rather than $\Der(A_V)$, though this is redundant by the preceding proposition.
\begin{definition}%%%%%%%%%%%%%%%%%%%%%%%%%%%%%%%
 A fibred prequantization $A_V$ is said to have \emph{enough symmetries} if the map $\Der_B(A_V)\to \X(B)$ in Prop. \ref{prop:der:fibredpreq:anchor} is surjective. 
\end{definition}%%%%%%%%%%%%%%%%%%%%%%%%%%%%%%%
In the case where $A_V$ does not admit enough symmetries, the prequantizations induced on different fibres may differ as Lie algebroids. In this paper, we will always assume that there are enough symmetries.
%%%%%%%%%%%%%%%%%%%%%%%%%%%%%%%%%%%%%%%%%%%%%%%%%%%%%%%%%%%%%%%%%%%%%%%%%%%%%%%%%%%%%%%%%%%%%%%%%%%%%%%%%%%%
%%%%%%%%%%%%%%%%%%%%%%%%%%%%%%%%%%%%%%%%%%%%%%%%%%%%%%%%%%%%%%%%%%%%%%%%%%%%%%%%%%%%%%%%%%%%%%%%%%%%%%%%%%%%
%%%%%%%%%%%%%%%%%%%%%%%%%%%%%%%%%%%%%%%%%%%%%%%%%%%%%%%%%%%%%%%%%%%%%%%%%%%%%%%%%%%%%%%%%%%%%%%%%%%%%%%%%%%%
%%%%%%%%%%%%%%%%%%%%%%%%%%%%%%%%%%%%%%%%%%%%%%%%%%%%%%%%%%%%%%%%%%%%%%%%%%%%%%%%%%%%%%%%%%%%%%%%%%%%%%%%%%%%
\begin{theorem}\label{thm:gauge:der:2form} Let $M\xrightarrow{p} B$ be a complete fibration. If a vertical prequantization
 $M\times\mathbb{R}\hookrightarrow A_V \twoheadrightarrow \Ver$ has enough symmetries, then the space of these symmetries fits into an exact sequence of Lie algebras:
 \begin{equation*}\label{seq:gauge:der:2form}
   \Omega^1_{cl}(\Ver)\hookrightarrow\Der_B(A_V)\twoheadrightarrow \X_B(M).
 \end{equation*}
More precisely, given a splitting $\sigma:\Ver\to A_V$ we denote $\omega_V=[\sigma,\sigma]-\sigma[\ ,\ ]$ the corresponding closed vertical  $2$-form.
 Then there is an identification:
\begin{equation}\label{gauge:sym:2form:split}\Der_B(A_V)\simeq\Bigl\{(X,\alpha_V)\in \X_B(M)\ltimes\Omega^1(\Ver)\ \bigl|\  \Lie_X\omega_V=\partialV\alpha_V\Bigr\},
 \end{equation}
such that the Lie algebra structure on $\Der_B(A_V)$ and its infinitesimal action on
$A_V \simeq \Ver {\scriptstyle\mathop{\ltimes}\limits_{\omega_F}} \mathbb{R}$ respectively take the following form:\vspace{-5pt}
\begin{align}
\label{eq:bracket:gauge:derivation:split}  \bigl[(X,\alpha_V),(Y,\beta_V)\bigr]=&\bigl([X,Y],\Lie_X\beta_V-\Lie_Y\alpha_V\bigr),\\ 
\label{eqn:gauge:derivation:2form}         (X,\alpha_V)\triangleright (Y_V,g)=&\bigl(\Lie_X Y_V, \Lie_X f+\alpha_V(Y_V)\bigr).
\end{align}
\end{theorem}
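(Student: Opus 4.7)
The plan is to adapt the proof of Theorem \ref{thm:der:2form} to the fibred setting: use the splitting $\sigma$ to identify $A_V \simeq \Ver \times \mathbb{R}$ with the $\omega_V$-twisted bracket, exploit that the symbol of any $D \in \Der_B(A_V)$ is projectable by Proposition \ref{prop:der:fibredpreq:anchor}, and mimic fibrewise the algebraic manipulations already carried out for Theorem \ref{thm:der:2form}.

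First I would derive the matricial form \eqref{eqn:gauge:derivation:2form}. Given $D \in \Der_B(A_V)$ with symbol $X \in \X_B(M)$, anchor compatibility forces $\sharp_V D(Y_V, 0) = [X, Y_V]$, and the Leibniz rule $D(fY_V, 0) = f D(Y_V, 0) + \Lie_X(f)(Y_V, 0)$ shows the second component of $D(Y_V, 0)$ is $C^\infty(M)$-linear in $Y_V$, hence a vertical one-form $\alpha_V \in \Omega^1(\Ver)$. Similarly, $D$ preserves the centre and, up to the standard normalisation fixing the canonical generator $(0, 1)$ of the extension, $D(0, g) = (0, \Lie_X g)$, yielding \eqref{eqn:gauge:derivation:2form}.

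Next, the condition that $D$ be a derivation of the twisted bracket of $A_V$ is translated into a condition on $(X, \alpha_V)$. Expanding $D[(Y_{1,V}, g_1), (Y_{2,V}, g_2)]$ and $[D(Y_{1,V}, g_1), (Y_{2,V}, g_2)] + [(Y_{1,V}, g_1), D(Y_{2,V}, g_2)]$ using \eqref{eqn:gauge:derivation:2form} and collecting the non-trivially cancelling terms, one reduces to
\[
(\Lie_X \omega_V)(Y_{1,V}, Y_{2,V}) = (\partialV \alpha_V)(Y_{1,V}, Y_{2,V}),
\]
exactly as in Theorem \ref{thm:der:2form} with $\d$ replaced by $\partialV$ since only vertical arguments are plugged in; here one uses that $X$ is projectable, hence preserves $\Gamma(\Ver)$, so that $\Lie_X$ acts naturally on vertical tensors. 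Conversely, any pair $(X, \alpha_V)$ satisfying $\Lie_X \omega_V = \partialV \alpha_V$ defines such a derivation via \eqref{eqn:gauge:derivation:2form}.

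Finally, the Lie-algebra structure and the exactness of \eqref{seq:gauge:der:2form} follow. A direct computation of $[D_1, D_2](Y_V, g)$ from \eqref{eqn:gauge:derivation:2form} shows the commutator again has the form \eqref{eqn:gauge:derivation:2form} with data $\bigl([X_1, X_2],\ \Lie_{X_1} \alpha_{2,V} - \Lie_{X_2} \alpha_{1,V}\bigr)$, since the $\omega_V$-twist in the bracket of $A_V$ never enters when one merely composes operators. Exactness of \eqref{seq:gauge:der:2form} is then immediate: the kernel of $(X, \alpha_V) \mapsto X$ consists of the pairs $(0, \alpha_V)$ with $\partialV \alpha_V = 0$, i.e.\ $\alpha_V \in \Omega^1_{cl}(\Ver)$; surjectivity is precisely the assumption that $A_V$ has enough symmetries. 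The main technical obstacle is keeping careful track of how the projectable field $X$ interacts with the purely vertical structures $\omega_V$ and $\alpha_V$; this is contained by restricting every bracket-level identity to vertical arguments, so that every $\d$ collapses unambiguously to $\partialV$.
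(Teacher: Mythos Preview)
Your proposal is correct and follows exactly the approach the paper indicates: the paper omits the proof entirely, stating only that it ``mimics point by point the one of Theorem \ref{thm:der:2form} by using a splitting $\sigma_V:\Ver\to A_V$,'' which is precisely what you do. One small gloss worth tightening: surjectivity onto $\X_B(M)$ is not \emph{literally} the ``enough symmetries'' hypothesis (which is surjectivity onto $\X(B)$), but follows from it once you note that any vertical field $X_V$ lifts to the derivation $(X_V, i_{X_V}\omega_V)$.
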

The proof can be omitted as it mimics point by point the one the Theorem \ref{thm:der:2form} by using a splitting  $\sigma_V:\Ver\to A_V$.
%%%%%%%%%%%%%%%%%%%%%%%%%%%%%%%%%%%%%%%%%%%%%%%%%%%%%%%%%%%%%%%%%%%%%%%%%%%%%%%%%%%%%%%%%%%%%%%%%%%%%%%%%%%%
%%%%%%%%%%%%%%%%%%%%%%%%%%%%%%%%%%%%%%%%%%%%%%%%%%%%%%%%%%%%%%%%%%%%%%%%%%%%%%%%%%%%%%%%%%%%%%%%%%%%%%%%%%%%
%%%%%%%%%%%%%%%%%%%%%%%%%%%%%%%%%%%%%%%%%%%%%%%%%%%%%%%%%%%%%%%%%%%%%%%%%%%%%%%%%%%%%%%%%%%%%%%%%%%%%%%%%%%%
%%%%%%%%%%%%%%%%%%%%%%%%%%%%%%%%%%%%%%%%%%%%%%%%%%%%%%%%%%%%%%%%%%%%%%%%%%%%%%%%%%%%%%%%%%%%%%%%%%%%%%%%%%%%
At that point, the following should be easy to prove as well:
\begin{corollary}
 A fibred prequantization $A_V$ admits enough symmetries \emph{iff} the equation \eqref{eq:closed2:2} admits a solution $\alpha_H$.
\end{corollary}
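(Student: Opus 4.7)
The plan is to unpack both conditions into the same pointwise vertical equation, and then globalise via a partition of unity. Fix a splitting $\sigma_V:\Ver\to A_V$ (with associated vertical $2$-form $\omega_V$) and an Ehresmann connection on $M\to B$. By Theorem~\ref{thm:gauge:der:2form}, the gauge symmetries of $A_V$ are the pairs $(X,\alpha_V)\in\X_B(M)\ltimes\Omega^1(\Ver)$ satisfying $\Lie_X\omega_V=\partialV\alpha_V$, so ``enough symmetries'' amounts to the statement that every $u\in\X(B)$ is the $p_*$-image of some such $X$.

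The first step would be to absorb the ambiguity in the choice of lift of $u$. Writing any projectable $X$ covering $u$ as $X=h(u)+Y$ with $Y\in\Gamma(\Ver)$, and using $\partialV\omega_V=0$ with Cartan's formula to get $\Lie_Y\omega_V=\partialV(i_Y\omega_V)$, one obtains
\[
\Lie_X\omega_V=(\partialH\omega_V)(u)+\partialV(i_Y\omega_V).
\]
Replacing $\alpha_V$ by $\beta_V:=\alpha_V-i_Y\omega_V$ then shows that a symmetry covering $u$ exists if and only if some $\beta_V\in\Omega^1(\Ver)$ satisfies the pointwise equation $(\partialH\omega_V)(u)=\partialV\beta_V$. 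Since equation~\eqref{eq:closed2:2}, after unfolding the bi-degree sign of $\delta_{0,1}$, reads $\Lie_{h(u)}\omega_V=\partialV(\alpha_H(u))$ for every $u$, a global solution $\alpha_H$ immediately yields enough symmetries by taking $\beta_V:=\alpha_H(u)$.

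For the converse, the pointwise solutions $\beta_V^u$ must be assembled into a $C^\infty(B)$-linear $\alpha_H\in\Omega^1(B,\Omega^1(\Ver))$. I would do this locally: on a trivialising chart $\U\subset B$ with coordinates $(x_1,\dots,x_n)$, pick $\beta_V^i\in\Omega^1(\Ver|_{p^{-1}(\U)})$ with $\partialV\beta_V^i=(\partialH\omega_V)(\partial_i)$ and set $\alpha_H^{\U}:=\sum_i\beta_V^i\,\d x_i$; then glue via a partition of unity $\{\rho_{\U}\}$ subordinate to a cover of $B$ by such charts, $\alpha_H:=\sum_{\U}\rho_{\U}\,\alpha_H^{\U}$.

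The main, and only non-routine, obstacle is checking that this gluing preserves \eqref{eq:closed2:2}. This is immediate once one notices that $\partialV$ is $C^\infty(B)$-linear: it differentiates only in the fibre directions, while the cut-offs $\rho_{\U}$ are pulled back from $B$. Hence $\partialV\alpha_H=\sum_{\U}\rho_{\U}\,\partialV\alpha_H^{\U}=\partialH\omega_V$, which is \eqref{eq:closed2:2}. Everything else is formal manipulation with Theorem~\ref{thm:gauge:der:2form} and the vanishing of $\partialV\omega_V$.
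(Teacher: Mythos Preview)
Your argument is correct and is precisely the computation the paper gestures at when it says ``the following should be easy to prove as well'' without giving any details. The reduction step---absorbing the vertical part $Y$ of a projectable lift into $\alpha_V$ via $\Lie_Y\omega_V=\partialV(i_Y\omega_V)$---is the right move, and the partition-of-unity gluing for the converse is exactly what is needed to pass from the existence of a primitive for each individual $u\in\X(B)$ to a $C^\infty(B)$-linear choice $\alpha_H\in\Omega^1(B,\Omega^1(\Ver))$.

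One cosmetic point: in your final display you write $\partialV\alpha_H=\partialH\omega_V$, but with the paper's sign convention $\delta_{0,1}^{(1,q)}=-\d_V$ one has $\partialV\alpha_H=-\partialH\omega_V$, which is of course exactly \eqref{eq:closed2:2}. You handled the sign correctly earlier when you unfolded the bi-degree, so this is just a slip in the last line. Also note that the paper adds, immediately after the corollary, that the existence of a solution $\alpha_H$ is independent of the chosen Ehresmann connection and splitting $\sigma_V$; your proof implicitly fixes both, so if you want to match the paper's remark you should observe separately that changing the connection shifts $\partialH\omega_V$ by a $\partialV$-exact term (via $\Lie_\Delta\omega_V=\partialV(i_\Delta\omega_V)$), and changing $\sigma_V$ shifts $\omega_V$ by a $\partialV$-exact term, neither of which affects solvability.
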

Note that the condition above is independent of the choice of the Ehresmann connection and splitting $\sigma_V$ chosen.
  Also, it is worth noticing that a prequantization $A_V \in E_1^{0,2}$ seen as an element in the spectral sequence
 (Thm. \ref{thm:prequantization:spectral:element}) has enough symmetries \emph{iff} it is a cocycle for $\d_1$.
%%%%%%%%%%%%%%%%%%%%%%%%%%%%%%%%%%%%%%%%%%%%%%%%%%%%%%%%%%%%%%%%%%%%%%%%%%%%%%%%%%%%%%%%%%%%%%%%%%%%%%%%%%%%
%%%%%%%%%%%%%%%%%%%%%%%%%%%%%%%%%%%%%%%%%%%%%%%%%%%%%%%%%%%%%%%%%%%%%%%%%%%%%%%%%%%%%%%%%%%%%%%%%%%%%%%%%%%%
%%%%%%%%%%%%%%%%%%%%%%%%%%%%%%%%%%%%%%%%%%%%%%%%%%%%%%%%%%%%%%%%%%%%%%%%%%%%%%%%%%%%%%%%%%%%%%%%%%%%%%%%%%%%
\begin{theorem}\label{thm:gauge:adjoint:action:long:exact:sequence:2form}%%%%%%%%%%%%%%%%%%%%%%%%%%%%%%%%%%%%%%%%%%%%%%%%%%%%%%%%%%%%%%%%%%%%%%%%%%%%%%%%%%%%%%%%%%%%%%%%%%%%%%%%%%%%
 Let $M\xrightarrow{p} B$ be a fibration. For a fibred prequantization
  $M\times\mathbb{R}\hookrightarrow A_V \twoheadrightarrow \Ver$ with enough symmetries
 the ``adjoint`` action $\ad:\Gamma(A_V)\xrightarrow{} \Der_B(A_V)$
 induces an exact sequence of Lie algebras of the form:\vspace{-3pt}
\begin{equation*}\label{seq:der:2form:infinitver}
  H^0(\Ver)\hookrightarrow \Gamma(A_V) \xrightarrow{\ad} \Der_B(A_V)\twoheadrightarrow \X(B)\ltimes H^1(\Ver).
\end{equation*}
More precisely, under the identifications of Thm. \eqref{thm:gauge:der:2form} we have: 
\begin{equation}\label{eq:ad:vertical}\ad_{(X_V,f)}=\bigl(X_V,i_{X_V}\omega_V-\d f\bigr)\in\X(\Ver)\ltimes\Omega^1(\Ver). 
\end{equation}%%%%%%%%%%%%%%%%%%%%%%%%%%%%%%%%%%%%%%%%%%%%%%%%%%%%%%%%%%%%%%%%%%%%%%%%%%%%%%%%%%%%%%%%%%%%%%%%%%%%%%%%%%%%
\end{theorem}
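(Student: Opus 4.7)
The plan is to mimic the proof of Theorem~\ref{thm:adjoint:action:long:exact:sequence:2form} in the fibred setting, fixing a splitting $\sigma_V:\Ver\to A_V$ to identify $A_V\simeq \Ver\times_{\omega_V}\mathbb{R}$ and reducing everything to direct computations with the explicit vertical bracket and the action formula \eqref{eqn:gauge:derivation:2form}.

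First I would establish the formula \eqref{eq:ad:vertical} for $\ad$. Applying the bracket on $A_V$ to $(X_V,f)$ and $(Y_V,g)$ gives
$$\ad_{(X_V,f)}(Y_V,g)=\bigl([X_V,Y_V],\,\Lie_{X_V}g-\Lie_{Y_V}f+\omega_V(X_V,Y_V)\bigr),$$
which is exactly $(X_V,i_{X_V}\omega_V-\d f)\triangleright(Y_V,g)$ under \eqref{eqn:gauge:derivation:2form}. To see that $(X_V,i_{X_V}\omega_V-\d f)$ lies in $\Der_B(A_V)$, I only need the defining constraint $\Lie_X\omega_V=\partialV\alpha_V$ from \eqref{gauge:sym:2form:split}, which follows from Cartan's formula together with $\partialV\omega_V=0$ and $\partialV\circ\d=0$. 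The kernel of $\ad$ is then read off at once: $\ad_{(X_V,f)}=0$ forces $X_V=0$ and $\d f=0$, i.e. $(X_V,f)\in H^0(\Ver)$.

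For the cokernel I would proceed in two stages. The symbol map $\Der_B(A_V)\to\X(B)$, $(X,\alpha_V)\mapsto p_*X$, of Prop.~\ref{prop:der:fibredpreq:anchor} is surjective by the enough-symmetries hypothesis and vanishes on $\im\ad$ since adjoint derivations have vertical symbol; hence it factors through $\Der_B(A_V)/\im\ad\twoheadrightarrow\X(B)$ whose kernel is represented by vertical pairs $(X_V,\alpha_V)$. For such a pair the combination $i_{X_V}\omega_V-\alpha_V$ is vertically closed (by the derivation constraint), so it represents a class in $H^1(\Ver)$; this class vanishes on $\im\ad$ because a direct substitution yields $[\d f]_1=0$, and conversely if it vanishes then $\alpha_V=i_{X_V}\omega_V-\d f$ for some $f$, exhibiting $(X_V,\alpha_V)=\ad_{(X_V,f)}$. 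This establishes exactness at $\Der_B(A_V)$ and identifies the cokernel with $\X(B)\ltimes H^1(\Ver)$.

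Surjectivity onto $\X(B)\ltimes H^1(\Ver)$ then follows from enough symmetries: given $(u,[\beta_V])$, pick any $(X,\alpha'_V)\in\Der_B(A_V)$ with $p_*X=u$, decompose $X=h(u)+X_V$, and add to $\alpha'_V$ a vertically closed representative of $[\beta_V]-[i_{X_V}\omega_V-\alpha'_V]_1$; this modification preserves the derivation condition because it only changes $\alpha'_V$ by a $\partialV$-closed form. The step I expect to require the most care is checking that the sequence is one \emph{of Lie algebras}, namely that the induced $\X(B)$-action on $H^1(\Ver)$ obtained from the bracket \eqref{eq:bracket:gauge:derivation:split} on $\Der_B(A_V)$ coincides with the canonical flat connection on $H^\bullet(\Ver)$ pointed out after \eqref{eq:commutation:relations}; I expect this to be a direct but not entirely automatic verification from the formulas for $\partialH$ and the symmetry bracket.
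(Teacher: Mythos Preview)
Your proposal is correct and follows exactly the strategy the paper intends: the paper omits the proof entirely, stating only that it is similar to the non-fibred Theorem~\ref{thm:adjoint:action:long:exact:sequence:2form}, which is precisely what you do by fixing a splitting, verifying $\ad_{(X_V,f)}=(X_V,i_{X_V}\omega_V-\d f)$ via \eqref{eqn:gauge:derivation:2form}, and reading off kernel and cokernel. Your treatment is in fact more detailed than either proof in the paper, and you correctly flag the one point---the semidirect product Lie algebra structure on the cokernel---that the paper glosses over.
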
%%%%%%%%%%%%%%%%%%%%%%%%%%%%%%%%%%%%%%%%%%%%%%%%%%%%%%%%%%%%%%%%%%%%%%%%%%%%%%%%%%%%%%%%%%%%%%%%%%%%%%%%%%%%
%%%%%%%%%%%%%%%%%%%%%%%%%%%%%%%%%%%%%%%%%%%%%%%%%%%%%%%%%%%%%%%%%%%%%%%%%%%%%%%%%%%%%%%%%%%%%%%%%%%%%%%%%%%%
Again, the proof is similar to that of the Theorem \ref{thm:adjoint:action:long:exact:sequence:2form} and can be omitted.%%%%%%%%%%%%%%%%%%%%%%%%%%%%%%%%%%%%%%%%%%%%%%%%%%%%%%%%%%%%%%%%%%%%%%%%%%%%%%%%%%%%%%%%%%%%%%%%%%%%%%%%%%%%
%%%%%%%%%%%%%%%%%%%%%%%%%%%%%%%%%%%%%%%%%%%%%%%%%%%%%%%%%%%%%%%%%%%%%%%%%%%%%%%%%%%%%%%%%%%%%%%%%%%%%%%%%%%%
%%%%%%%%%%%%%%%%%%%%%%%%%%%%%%%%%%%%%%%%%%%%%%%%%%%%%%%%%%%%%%%%%%%%%%%%%%%%%%%%%%%%%%%%%%%%%%%%%%%%%%%%%%%%

%%%%%%%%%%%%%%%%%%%%%%%%%%%%%%%%%%%%%%%%%%%%%%%%%%%%%%%%%%%%%%%%%%%%%%%%%%%%%%%%%%%%%%%%%%%%%%%%%%%%%%%%%%%%
%%%%%%%%%%%%%%%%%%%%%%%%%%%%%%%%%%%%%%%%%%%%%%%%%%%%%%%%%%%%%%%%%%%%%%%%%%%%%%%%%%%%%%%%%%%%%%%%%%%%%%%%%%%%
%%%%%%%%%%%%%%%%%%%%%%%%%%%%%%%%%%%%%%%%%%%%%%%%%%%%%%%%%%%%%%%%%%%%%%%%%%%%%%%%%%%%%%%%%%%%%%%%%%%%%%%%%%%%
%%%%%%%%%%%%%%%%%%%%%%%%%%%%%%%%%%%%%%%%%%%%%%%%%%%%%%%%%%%%%%%%%%%%%%%%%%%%%%%%%%%%%%%%%%%%%%%%%%%%%%%%%%%%
\subsection{Closed extensions as Lie algebroid fibrations}\label{closed:extensions1}
%%%%%%%%%%%%%%%%%%%%%%%%%%%%%%%%%%%%%%%%%%%%%%%%%%%%%%%%%%%%%%%%%%%%%%%%%%%%%%%%%%%%%%%%%%%%%%%%%%%%%%%%%%%%
%%%%%%%%%%%%%%%%%%%%%%%%%%%%%%%%%%%%%%%%%%%%%%%%%%%%%%%%%%%%%%%%%%%%%%%%%%%%%%%%%%%%%%%%%%%%%%%%%%%%%%%%%%%%
%%%%%%%%%%%%%%%%%%%%%%%%%%%%%%%%%%%%%%%%%%%%%%%%%%%%%%%%%%%%%%%%%%%%%%%%%%%%%%%%%%%%%%%%%%%%%%%%%%%%%%%%%%%%
%%%%%%%%%%%%%%%%%%%%%%%%%%%%%%%%%%%%%%%%%%%%%%%%%%%%%%%%%%%%%%%%%%%%%%%%%%%%%%%%%%%%%%%%%%%%%%%%%%%%%%%%%%%%
As explained in \cite{BZ}, a natural class of \emph{locally trivial} fibrations for Lie algebroids is given by abstract extensions of a tangent bundle $TB$:
$$A_V\hookrightarrow A_M \twoheadrightarrow TB,$$
under the completeness assumption. As the following result shows,  when  $A_V$ is a prequantization, we obtain a geometrical context to the Thm. \ref{thm:closed:extension} in the case of $2$-forms:
\begin{theorem}\label{thm:thurston:geometric:2forms}%%%%%%%%%%%%%%%%%%%%%%%%%%%%%%%%%%%%%%%%%%%%%%%%%%%%%%%%%%%%%%%%%%%%%%%%%%%%%%%%%%%%%%%%%%%%%%%%%%%%%%%%%%%%
Consider a complete fibration  $M\to B$ together with a closed $2$-form $\omega_V$ defined fibre-wise, and denote $A_{\omega_V}$ the associated fibred prequantization.
Then closed $2$-forms on $M$ whose cohomology class restricts to $[\omega_V]_1\in H^2(\Ver)$ are the same as Lie algebroid extensions $A_M$ of the form $A_V\hookrightarrow A_M \to TB$.
\end{theorem}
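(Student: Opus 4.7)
The plan is to exhibit explicit constructions in both directions, then verify they are mutually inverse up to the natural equivalences on each side. The bijection will be an instance of the general principle that prequantization Lie algebroids ``are'' closed $2$-forms.

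For the forward direction, start with a closed $2$-form $\omega \in \Omega^2(M)$ with $[\omega|_{\Ver}]_1 = [\omega_V]_1$. Form the prequantization Lie algebroid $A_\omega := TM \times \mathbb{R}$ with anchor $(X,f)\mapsto X$ and bracket
\[
  \bigl[(X,f),(Y,g)\bigr] = \bigl([X,Y],\Lie_X g - \Lie_Y f + \omega(X,Y)\bigr).
\]
Composing the anchor with $p_* : TM \to TB$ gives a surjection of Lie algebroids $A_\omega \twoheadrightarrow TB$ (here one uses that $p_*$ is a Lie algebroid morphism, which is immediate). Its kernel as a vector bundle is $\Ver \times \mathbb{R}$ and, by restriction of the bracket above, is precisely the prequantization $A_{\omega|_{\Ver}}$. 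Because $[\omega|_{\Ver}]_1 = [\omega_V]_1$, we may pick $\alpha_V \in \Omega^1(\Ver)$ with $\omega|_{\Ver} = \omega_V + \partialV \alpha_V$ and use the standard isomorphism of extensions
\[
  \Phi : A_{\omega|_{\Ver}} \xrightarrow{\;\sim\;} A_{\omega_V} = A_V, \qquad (X_V,f) \mapsto (X_V, f + \alpha_V(X_V)),
\]
verifying directly that $\Phi$ intertwines the two brackets. Transporting the Lie algebroid structure of $A_\omega$ through $\Phi$ on the kernel produces the desired extension $A_V \hookrightarrow A_M \twoheadrightarrow TB$.

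For the backward direction, let $A_V \hookrightarrow A_M \twoheadrightarrow TB$ be an extension of Lie algebroids over $M \to B$. The key observation is that the anchor $\sharp_M : A_M \to TM$ makes $A_M$ itself a prequantization of $TM$. Indeed, a diagram chase (or direct check using that the restriction of $\sharp_M$ to $A_V$ is the anchor $\sharp_V : A_V \to \Ver$, whose kernel is $M \times \mathbb{R}$) shows that $\sharp_M$ is surjective with kernel $M \times \mathbb{R}$, so
\[
  M \times \mathbb{R} \hookrightarrow A_M \twoheadrightarrow TM
\]
is a prequantization in the sense of the paper. Choosing a splitting $TM \to A_M$ produces a closed $2$-form $\omega \in \Omega^2(M)$ via $\omega(X,Y) = [\sigma(X),\sigma(Y)] - \sigma([X,Y])$. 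Restricting $\sigma$ to $\Ver$ gives a splitting of $A_V$ whose associated fibrewise $2$-form is $\omega|_{\Ver}$; since $A_V = A_{\omega_V}$ by hypothesis and different splittings yield cohomologous forms fibrewise, we conclude $[\omega|_{\Ver}]_1 = [\omega_V]_1$.

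Finally, one checks the two constructions are inverse up to the natural equivalences: on the algebroid side, isomorphism of extensions fixing $A_V$ and $TB$; on the form side, the ambiguity inherent in choosing a splitting, which precisely corresponds to modifying $\omega$ by a form in the image of $\d$ restricted to $1$-forms pulled back through the horizontal projection. The main subtlety, and where most of the care is needed, is ensuring that these equivalences match up correctly — that is, that a change of splitting on the $A_M$ side induces an exact modification of $\omega$ of the expected type, and conversely that two closed extensions $\omega, \omega'$ which differ by such an exact correction yield isomorphic Lie algebroid extensions. This amounts to unwinding the cocycle/coboundary dictionary between Lie algebroid extensions of $TM$ by $M \times \mathbb{R}$ and closed $2$-forms on $M$, applied in the relative setting over $B$.
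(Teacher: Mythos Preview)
Your proof is correct and, in its forward direction (closed form $\Rightarrow$ extension), essentially identical to the paper's converse paragraph. The backward direction, however, is handled differently. The paper invokes the general structure theory of Lie algebroid extensions from \cite{Br}: after choosing a complement to $A_V$ in $A_M$, an extension is encoded by a pair $(\D,\omega)$ with $\D:\X(B)\to\Der_B(A_V)$ and $\omega\in\Omega^2(B,\Gamma(A_V))$ subject to $\Curv_\D=\ad\circ\omega$ and $\D^\triangleright\omega=0$. Writing $\D_u=(h(u),\alpha_H(u))$ and $\omega(u,v)=(C(u,v),\omega_H(u,v))$ under the splitting $A_V\simeq\Ver\times\mathbb{R}$, these two structure equations are then shown to coincide literally with the closedness equations \eqref{eq:closed2:2}--\eqref{eq:closed2:4}, so that $\omega_V\oplus\alpha_H\oplus\omega_H$ is the desired closed $2$-form.

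Your route is more conceptual: you observe directly that $A_M$ is itself a prequantization of $TM$ (the diagram chase you sketch for surjectivity and kernel of $\sharp_M$ is fine), and then read off $\omega$ from a splitting $TM\to A_M$. This is shorter and makes the theorem an instance of the dictionary ``prequantization $=$ closed $2$-form'' applied once on $M$ and once fibrewise. What the paper's approach buys, and what yours does not display, is the explicit identification of the extension data $(\D,\omega)$ with the components $(h,\alpha_H,C,\omega_H)$ of the decomposition \eqref{eq:omega:decomposition}; this is precisely what feeds into the $2$-connection interpretation of Section~\ref{sec:2connections}. If you only want the bijection of the theorem, your argument suffices; if you want to see \emph{why} the closedness equations are the structure equations of an extension, the paper's computation is the point.
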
 
\begin{proof}
Recall from \cite{Br} that, up to the choice of a complementary sub-bundle of $A_V$ in $A_M$, extensions of the form $A_V\hookrightarrow A_M \to TB$ are equivalent
 to couples $(\D,\omega)$ where $\D:\X(B)\to \Der_B(A_V)$ and $\omega\in\Omega^2(B, \Gamma(A_V))$ are submitted to the following compatibility conditions:
\begin{align}\label{eq:struc:extensions2form}
\Curv_\D=&\ad\circ\omega,\\
\D^\triangleright\omega=&0, 
\end{align}
where $\Curv_\D:=[\D,\D]-\D_{[\ ,\ ]}$ and $(\D^\triangleright\omega)(u,v,w):=\oint_{u,v,w} \D_u\omega(v,w)-\omega([u,v],w)$. In the case where $A_V$ is a vertical prequantization, one may take advantage of an identification $A_V\simeq \Ver\times \mathbb{R}$
 and use \eqref{gauge:sym:2form:split} in order to obtain a more explicit description for $(\D,\omega)$. We obtain that:
\begin{itemize}
 \item $\D$ is of the form $\D_u=\bigl(h(u),\alpha_H(u)\bigr)\in \X_B(M)\ltimes \Omega^1(\Ver)$ where $h(u)$ and $\alpha_H(u)$ satisfy
 $\Lie_{h(u)}\omega_V=\partialV\!\!\circ\alpha_H(u),$ we recover here \eqref{eq:closed2:2}; 
 \item $\omega$ is of the form $\omega(u,v)=\bigl(C(u,v),\omega_H(u,v)\bigr) \in\Gamma(\Ver)\ltimes C^\infty(M)$.
\end{itemize}
Here, $h(u)$ is the horizontal lift of a usual Ehresmann connection on $M\to B$, defined to be the symbol of the derivation $\D_u$. Then using the formula \eqref{eq:bracket:gauge:derivation:split} for the bracket of derivations, we can also compute $\Curv_\D$ more explicitly:
\begin{align*}\Curv_\D(u,v)&:=[\D_u,\D_v]-\D_{[u,v]}\\
                           &=\bigl([h(u),h(v)],\Lie_{h(u)}\alpha_H(v)-\Lie_{h(v)}\alpha_H(u)-\alpha_H(u,v)\bigr)\\&\quad\quad\quad\quad\quad-(h([u,v]),\alpha_H([u,v]))\\
                           &=\bigl([h(u),h(v)]-h([u,v]),\partialH\alpha_H(u,v)\bigr).
\end{align*}
It follows by applying the formula \eqref{eq:ad:vertical} for the adjoint action that the structure equation \eqref{eq:struc:extensions2form} reads:
$$\Curv_\D=\ad\circ\omega  \iff
 \left\{\begin{array}{l}\partialC\omega_V+\partialH\alpha_H+\partialV \omega_H=0,\\ C \text{ is the curvature of h}.
          \end{array}\right.
$$
Thus we recover here the equation \eqref{eq:closed2:3}. Finally, taking into account the way that $\Der_B(A_V)$ act on $A_V$ \eqref{eqn:gauge:derivation:2form} we easily obtain:
$$\D^\triangleright\omega=0\iff \partialC \alpha_H+\partialH\omega_H=0.$$  
This is precisely the equation \eqref{eq:closed2:4} and we conclude that $\omega_V\oplus\alpha_H\oplus\omega_H$ defines a closed $2$-form on $M$.

Reciprocally, given a closed $2$-form $\omega_M$ defined on the total space, we may consider the prequantization $A_{\omega_M}\hspace{-4pt}\to M$ induced on $M$, 
and compose the anchor with $p_*$ in order to obtain a Lie algebroid epimorphism $A_{\omega_M}\hspace{-4pt}\twoheadrightarrow TB$.
As easily seen, it has kernel $A_V=\Ver {\scriptstyle\mathop{\ltimes}\limits_{\omega_V}} \mathbb{R}$ so we have an extension:\vspace{-5pt}
$$A_{\omega_V}\hookrightarrow A_{\omega_M}\twoheadrightarrow TB.$$
By choosing an Ehresmann connection $h$, we can see the horizontal distribution as a complementary sub-bundle of $A_{\omega_V}=\Ver \times \mathbb{R}$
in $A_{\omega_M}=TM\times \mathbb{R}$. Then we apply the construction of \cite{Br}, namely: $\D$ and $\omega$ are obtained by letting
$\D_u:=[h(u),\ ]_{A_{\omega_M}}$ and $\omega(u,v):=[h(u),h(v)]_{A_{\omega_M}}\hspace{-4pt}-ħ([u,v])$. 
By doing this, we easily recover the expressions $\D=(h,\alpha_H)$ and  $\omega=(C,\omega_H)$. \end{proof}
 %%%%%%%%%%%%%%%%%%%%%%%%%%%%%%%%%%%%%%%%%%%%%%%%%%%%%%%%%%%%%%%%%%%%%%%%%%%%%%%%%%%%%%%%%%%%%%%%%%%%%%%%%%%%
%%%%%%%%%%%%%%%%%%%%%%%%%%%%%%%%%%%%%%%%%%%%%%%%%%%%%%%%%%%%%%%%%%%%%%%%%%%%%%%%%%%%%%%%%%%%%%%%%%%%%%%%%%%%
%%%%%%%%%%%%%%%%%%%%%%%%%%%%%%%%%%%%%%%%%%%%%%%%%%%%%%%%%%%%%%%%%%%%%%%%%%%%%%%%%%%%%%%%%%%%%%%%%%%%%%%%%%%%
%%%%%%%%%%%%%%%%%%%%%%%%%%%%%%%%%%%%%%%%%%%%%%%%%%%%%%%%%%%%%%%%%%%%%%%%%%%%%%%%%%%%%%%%%%%%%%%%%%%%%%%%%%%%
%%%%%%%%%%%%%%%%%%%%%%%%%%%%%%%%%%%%%%%%%%%%%%%%%%%%%%%%%%%%%%%%%%%%%%%%%%%%%%%%%%%%%%%%%%%%%%%%%%%%%%%%%%%%
%%%%%%%%%%%%%%%%%%%%%%%%%%%%%%%%%%%%%%%%%%%%%%%%%%%%%%%%%%%%%%%%%%%%%%%%%%%%%%%%%%%%%%%%%%%%%%%%%%%%%%%%%%%%

\subsection{The 2-connection picture}\label{sec:2connections}
We now want to interpret the equations \eqref{eq:closed2:1}-\eqref{eq:closed2:3} in terms of  $2$-connections, as we did for weakly hamiltonian fibration in the Section \ref{sec:local:symplectic}. Again we shall recover \eqref{eq:closed2:3} as the vanishing of the fake curvature, and the left hand side of the equation \eqref{eq:closed2:4} as the curvature $3$-form.

To this purpose, we first use the Ehresmann connection in order to obtain a local diffeomorphism %$$ \xymatrix{p^{-1}(U)\ar[rr]^{\Phi} \ar[rd]&&U\times F\ar[dl]\\ &U},$$
$\Psi_\U:p^{-1}(\mathcal{U})\to F\times \mathcal{U}$, where $\mathcal{U}$ is a open subset in $B$ diffeomorphic to $\mathbb{R}^n$ with  system of local coordinates $x=(x_1,\dots,x_n)$. We denote by  $\partial_i=\partial x_i$ the corresponding vector fields.

The connection and its curvature write locally as follows:
\begin{align*}
h(\partial_i)&\loc\partial_i+a_i,\text{ where }a_i:\mathcal{U} \to \X(F),\\
C(\partial_i,\partial_j)&=[h(\partial_i),h(\partial_j)]-h([\partial_i,\partial_j])\\ &\loc\partial_i a_j-\partial_j a_i+[a_i,a_j].
\end{align*}
By letting $a:=\sum_i a_i\d x_i\in \Omega^1(\mathcal{U},\X(F))$ we obtain the local connection form, and we recover above the formula $C\loc\d a+ a_\wedge a$.
Then we take local coefficients $\alpha_H\loc\sum_i\alpha_i\d x_i$, where $\alpha_i:\mathcal{U}\to \Omega^1(F).$ Using the description in the proof of Thm. \ref{thm:thurston:geometric:2forms}, we obtain the following local expression for $\D$:
$$\D_{\partial x_i}=(h(\partial_i),\alpha_H(\partial_i))\loc(\partial_i+a_i,\alpha_i).$$
Let us now set  $\alpha:=\sum_i \alpha_i\d x_i\in\Omega^1(\mathcal{U},\Omega^1(F))$. Here, the reader may probably expect $(a,\alpha)$
 to be 'the' local connection form of $\D$. There is however a basic issue that will prevent us to do this.
 Indeed, unlike in the symplectic case, $\omega_V$ depends on the basic variables,
 thus when writing the equation \eqref{eq:closed2:2} in local coordinates, we obtain:
\begin{equation}\label{wronglocal2form}
 \partial_i{\omega_V}+ \Lie_{a_i}\omega_V+\partialV\alpha_i=0.
\end{equation}
Though this equation establishes $(\partial_i+a,\alpha_i)$ as an element of $\Der(A_{\omega_V})$, the fact that $\partial_i\omega_V\neq 0$  does not allow to write that $(a,\alpha)\in \Omega^1(\U,\Der(A_{\omega_F}))$.

Instead, we can take advantage of \eqref{eq:integration:2form}, from which we deduce the existence of $\Delta_\U:\mathcal{U}\to\Omega^1(F)$ satisfying
$\omega_V+\partialV\Delta_\U=\omega_F.$
Using $\Delta_\U$, we can now rewrite the equation \eqref{wronglocal2form} in the following manner:
\begin{equation*}
 \Lie_{a_i}\omega_F=\partialV\bigl(\alpha_i+(\partial_i+\Lie_{a_i})\Delta_\U\bigr),
\end{equation*}
which precisely means that $(a,\alpha+\partialH\Delta_\U)$ takes values in $\Der(A_{\omega_F})$.
 This makes $A:=(a,\alpha+\partialH\Delta_\U)$ a much better candidate for what to be defined as the local connection form of $\D$.
 
We now look at the structure equations in local coordinates, for the sake of simplicity, we will assume first that the term $\Delta_\U$ vanishes.

\noindent\textbf{The case $\Delta_\U=0$:}

When writing down the local curvature form associated with $A$, we obtain:
\begin{align*}
   (\d A+A_\wedge A)(\partial_i,\partial_j )
&= \partial_i(a_j,\alpha_j)-\partial_j(a_i,\alpha_i)+[(a_i,\alpha_i),(a_j,\alpha_j)]_\g\\
&= \bigl(\partial_i a_j-\partial_j a_i,\partial_i \alpha_j-\partial_j \alpha_i\bigr)
                           +\bigl([a_i,a_j],\Lie_{a_i} \alpha_j-\Lie_{a_j} \alpha_i \bigr)\\
&= \bigl(\partial_i a_j-\partial_j a_i+[a_i,a_j],\partial_i \alpha_j
     +\Lie_{a_i} \alpha_j-   \partial_j \alpha_i     -\Lie_{a_j}\alpha_i\bigr)\\
&\loc \bigl(C(\partial_i,\partial_j),\partialH\alpha(\partial_i,\partial_j)\bigr).
\end{align*}
Here we recover the expression for $\Curv_\D$ in local coordinates, namely:
$$\d A + A_\wedge A\loc(C,\partialH \alpha_H))=\Curv_\D.$$
Let us now take local coefficients for $\omega_H\loc \sum_{i,j} \omega_H^{i,j}\d x_i{}_\wedge \d x_j$ and set: 
$$B:=(C_{i,j},\omega_H^{i,j})\d x_i{}_\wedge \d x_j\in \Omega^2(\mathcal{U},\X\ltimes C^\infty(F)).$$
Following the lines of Section \ref{closed:extensions1}, one may apply locally the formula \eqref{eq:ad:vertical} for the adjoint action, we easily obtain $\ad_B\loc (C, \partialV \omega_H+\partialC \omega_V )$, hence we observe that:
$$\eqref{eq:closed2:3}\  \partialC\omega_V+\partialH\alpha_H+\partialV\omega_H=0 \iff \d A+ A_\wedge A+\d t(B)=0.$$
In other words, we obtained locally a $2$-connection $(A,B)$ with values in the following crossed-module:
\begin{equation}\label{crossed:module:2form}\Bigr(\X{\scriptstyle\mathop{\ltimes}\limits_{\omega_F}}\!C^\infty(F)\xrightarrow{\d t} \X\ltimes\Omega^1(F)\Bigl)
=\Bigl(\Gamma(A_{\omega_F})\xrightarrow{\ad} \Der(A_{\omega_F})\Bigr)
\end{equation}
It has vanishing fake curvature provided equation \eqref{eq:closed2:3} holds. Let us now compute the corresponding curvature $3$-form, we obtain:
\begin{align*}   
(\d B+ A_\wedge B)_{i,j,k}
=&\oint_{i,j,k}\partial_k \bigl(C_{i,j},\omega_H^{i,j} \bigr)+(a_k,\alpha_k)\triangleright \bigl(C_{i,j},\omega_H^{i,j}\bigr)\\
=&\oint_{i,j,k} \bigl(\partial_k C_{i,j},\partial_k\omega_H^{i,j}\bigr)
                             +\bigl(\Lie_{a_k}{C_{i,j}},\Lie_{a_k}\omega_H^{i,j}+i_{C_{i,j}}\alpha_k\bigr)\\
=& \oint_{i,j,k} \bigl(0,\Lie_{\partial x_i+a_i}\omega_H^{j,k}+ i_{C_{i,j}}\alpha_k\bigr)           \\
\loc&\bigl(0, \partialH\omega_H+\partialC\alpha\bigr)(\partial_i,\partial_j,\partial_k).
\end{align*}
This way we recover the curvature $3$-form as the left hand side of \eqref{eq:closed2:4}  written in local coordinates.

\noindent\textbf{The case $\Delta_\U\neq 0$:}

In that situation, one shall think of the couple $(\Psi_{\mathcal{U}},\Delta_\U)$ as playing the role of a chart, which is also is consistent with the use of $2$-groups. Here, $\Delta_\U$ acts as a gauge transformation, sending $(\Psi_{\mathcal{U}})_*\omega_M$
 to $(\Psi_{\mathcal{U}})_*\omega_M+\d \Delta_\U$. In our context, this may be justified in the following manner: identifying $(\Psi_{\mathcal{U}})_*\omega_M$ with $\omega_M$ and $p^{}(\U)$ with $\U\times F$, the transformation $\omega_M\mapsto \omega_M+\d \Delta_\U$ reads:
\begin{align*}
 \omega_V  \mapsto&\ \omega_V+\partialV\Delta_\U=\omega_F\\
 \alpha_H    \mapsto&\hspace{4pt}\alpha_H+\partialH\Delta_\U \\
 \omega_H  \mapsto&\ \omega_H+\partialC \Delta_\U.
\end{align*}
Yet, the first two lines above are coherent with our motivation for defining $A=(a,\alpha+\partialH\Delta_\U)$ as the correct notion of connection $1$-form.
 
  This suggests that $B:=(C,\omega_H+\partialC \Delta_\U)$ should be the curvature $2$-form in the chart $(\Phi_\U,\Delta_\U)$. Indeed, a straightforward computation involving the relations \eqref{eq:commutation:relations} easily yields:
  $$\d A+[A_\wedge A]+\d t(B)=0\iff \eqref{eq:closed2:3} \text{ holds.}$$

% (equivalently one may apply the computation for $\Delta_\U=0$ with 
% $\alpha$ and $\omega_H$ replaced respectively by $\alpha+\partialH\Delta_\U $ and  $\omega_H+\partialC\Delta_\U$ and use the commutation relations).
%  We will
% just indicate the first step:
%\begin{align*}
%(\d A+A_\wedge A)(\partial_i,\partial_j )
% =& \partial_i(a_j,\alpha_j+\partial_j \Delta_\U)-\partial_j(a_i,\alpha_i+\partial_i \Delta_\U)\\
% & \quad \quad\quad+\bigl[(a_i,\alpha_i+\partial_i \Delta_\U),(a_j,\alpha_j+\partial_i \Delta_\U)\bigr]\\
% =&[...]\\
% =&t(C_{i,j}, \omega_H^{i,j}+i_{C_{i,j}} \Delta_\U).
%\end{align*}
%we obtain the local expression for the curving in the chart $(\Phi_U,\Delta_\U)$,

 Furthermore, a similar computation shows that the curvature $3$-form $\d B+A_\wedge B$ vanishes precisely when the equation \eqref{eq:closed2:4} holds. Here of course, the action $\triangleright$ should be taken with respect to ${\omega_F}$ (rather than $\omega_V$) when pairing the tensor product $A_\wedge B$ in local coordinates.

We can  summarize the description of the closured equations $2$-form in terms of $2$-connections in the following way:
\begin{proposition}
Let $M\to B$ be a complete fibration and $\omega\in \Omega^2(M)$ be a closed $2$-form.
Then, given a complete connection $\Hor$ there is, for any $x\in B$, a neighborhood $\mathcal{U}\subset B$ and a local coordinate chart $(\Psi_{\mathcal{U}}, \Delta_\mathcal{U})$, where $\Psi_{\mathcal{U}}:p^{-1}(\mathcal{U})\simeq \mathcal{U}\times F$ and $\Delta_{\mathcal{U}}:\mathcal{U}\to \Omega^1(F)$ for which we obtain the following local correspondence:
\begin{eqnarray*}
\quad\quad\quad\quad\quad\quad\quad\D=(h,\alpha_H)                 &\loc&  A\in\Omega^1(\mathcal{U},\X\ltimes\Omega^1(F)),\\
\quad\quad\quad\quad\quad\quad\quad\omega=(C,\omega_H)             &\loc&   B\in\Omega^2(\mathcal{U},\X\ltimes C^\infty(F)).
\end{eqnarray*}
In particular, the covariant derivative associated with $\D$ reads locally: $$\quad\quad\quad\quad \D^\triangleright\ \quad \!\!       \loc \quad \d \ +A_\wedge $$ 

Furthermore, the closedness equations \eqref{eq:closed2:2}-\eqref{eq:closed2:4} for $\omega$ to be closed respectively read:
\begin{eqnarray*}
\quad\quad\partialH\omega_V+\partialV \alpha_H\hspace{42pt}=0\hspace{2pt}        &\iff & A\in \Omega^1(\mathcal{U},\Der(A_{\omega_F}))\\
\partialC\omega_V+\partialH \alpha_H +\partialV\omega_H =0 \hspace{2pt}&\iff & \d A+A_\wedge A + t(B)=0\quad\quad\quad\\  
\partialC \alpha_H + \partialH \omega_H=0                           &\iff & \d B+A_\wedge B=0.
\end{eqnarray*}
Namely, on the left side, we recover the structure equations for $(A,B)$ to be a $2$-connection with values in 
$$\,\Bigr(\X{\scriptstyle\mathop{\ltimes}\limits_{\omega_F}} C^\infty(F)\to  \X\ltimes\Omega^1(F)\Bigl)\ \,
=\ \Bigl(\Gamma(A_{\omega_F})\xrightarrow{\ad} \Der(A_{\omega_F})\Bigr)$$
and whose fake curvature $2$-form, and curvature $3$-form both vanish.
\end{proposition}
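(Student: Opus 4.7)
The plan is to build the chart and the local forms in sequence, then verify the three structure equations of a $2$-connection one by one. The geometric setup is straightforward because most of the hard work has already been done in Thm.~\ref{thm:thurston:geometric:2forms} and in the discussion preceding the proposition; what remains is to package the local computations correctly.

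First, I would construct the chart. Using completeness of $\Hor$, I would choose $\U \subset B$ diffeomorphic to $\mathbb{R}^n$ around a chosen base point and identify $p^{-1}(\U) \simeq F \times \U$ via parallel transport along the radial vector field $r = \sum_i x_i \partial_i$. Under this trivialization $\omega_V$ depends a priori on the basic variables $x$; however, integrating \eqref{eq:closed2:2} along $r$ through the formula \eqref{eq:integration:2form} yields a form $\Delta_\U : \U \to \Omega^1(F)$ with $\omega_V + \partialV \Delta_\U = \omega_F$, where $\omega_F$ is the value of $\omega_V$ at $x=0$. This $\Delta_\U$ is the second piece of the chart datum $(\Psi_\U, \Delta_\U)$.

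Next, writing the local coefficients $h(\partial_i) \loc \partial_i + a_i$ with $a_i : \U \to \X(F)$, and $\alpha_H \loc \sum_i \alpha_i\, \d x_i$, I would set
\begin{equation*}
A := \bigl(a,\ \alpha + \partialH \Delta_\U\bigr) \in \Omega^1\bigl(\U,\X \ltimes \Omega^1(F)\bigr),\quad B := \bigl(C,\ \omega_H + \partialC \Delta_\U\bigr) \in \Omega^2\bigl(\U,\X \ltimes C^\infty(F)\bigr).
\end{equation*}
The point of the correction by $\Delta_\U$ is precisely that \eqref{eq:closed2:2} translates, after shifting $\omega_V$ to $\omega_F$, into the statement $\Lie_{a_i} \omega_F = \partialV(\alpha_i + (\partial_i + \Lie_{a_i})\Delta_\U)$, so that $A$ takes values in $\Der(A_{\omega_F})$ under the identification \eqref{id:der:2form}. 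This gives the first of the three equivalences.

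It then remains to verify the fake curvature equation and the curvature $3$-form equation. For the fake curvature, I would expand $\d A + A_\wedge A$ in the crossed module \eqref{crossed:module:2form} using the formula \eqref{eq:bracket:gauge:derivation:split} for the bracket of derivations, recognizing the first component as $C(\partial_i,\partial_j)$ and the second as $\partialH(\alpha + \partialH \Delta_\U)(\partial_i,\partial_j)$. On the other side, $\d t(B) = \ad_{(C, \omega_H + \partialC \Delta_\U)}$ yields via \eqref{eq:ad:vertical} the term $i_C \omega_F - \partialV(\omega_H + \partialC \Delta_\U)$. Combining these and using \eqref{eq:closed2:3} together with the commutation relations \eqref{eq:commutation:relations} (specifically $\partialH \partialH + \partialV \partialC + \partialC \partialV = 0$, which is exactly what generates the compensating terms coming from $\Delta_\U$) gives the equivalence with \eqref{eq:closed2:3}. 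The same kind of calculation for $\d B + A_\wedge B$ yields, after using $\partialH \partialC + \partialC \partialH = 0$ to absorb the $\Delta_\U$ corrections, the equality with $\partialC \alpha_H + \partialH \omega_H$, so its vanishing is exactly \eqref{eq:closed2:4}.

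The main obstacle is purely bookkeeping: tracking how the gauge correction $\Delta_\U$ propagates through each formula and showing that the extra $\partialH \Delta_\U$ and $\partialC \Delta_\U$ terms cancel among each other by means of the commutation relations \eqref{eq:commutation:relations}. Once this cancellation is organized carefully, the proposition follows immediately from the local expressions for $\D$ and $\omega$ obtained in the proof of Thm.~\ref{thm:thurston:geometric:2forms}.
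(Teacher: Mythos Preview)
Your proposal is correct and follows essentially the same approach as the paper: the paper's ``proof'' is the discussion in \S\ref{sec:2connections} immediately preceding the proposition, where the chart is built via radial parallel transport, $\Delta_\U$ is obtained from \eqref{eq:integration:2form}, $A=(a,\alpha+\partialH\Delta_\U)$ and $B=(C,\omega_H+\partialC\Delta_\U)$ are defined, and the three equivalences are checked (in detail for $\Delta_\U=0$, then by the gauge-shift argument and the commutation relations \eqref{eq:commutation:relations} in general). Your write-up treats the general case directly rather than splitting off $\Delta_\U=0$ first, but otherwise matches the paper step for step.
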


Of course, the bottom line is the following:

\begin{corollary}
Let $M\to B$ be a complete fibration and $\omega\in \Omega^2(M)$. Then the equation for $\d \omega=0$ is equivalent to the structure equations of a $2$-connection with values in a crossed module of the form $\Gamma(A_{\omega_F})\xrightarrow{\ad} \Der(A_{\omega_F})$, and whose fake curvature $2$-form and curvature $3$-form both vanish. Here, $A_F$ is a prequantization Lie algebroid.
\end{corollary}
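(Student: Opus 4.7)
The plan is to deduce the corollary from the preceding Proposition by absorbing the first closedness equation $\partialV\omega_V=0$ into the prequantization data itself, so that the corollary becomes essentially a global reading of what the Proposition establishes chart by chart. The three remaining equations are already translated in the Proposition, so the work is mostly organizational.

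First I would decompose $\omega=\omega_V\oplus\alpha_H\oplus\omega_H$ according to \eqref{eq:omega:decomposition}, so that $\d\omega=0$ is equivalent to the system \eqref{eq:closed2:1}--\eqref{eq:closed2:4}. The very first equation $\partialV\omega_V=0$ is the defining property of a fibred prequantization Lie algebroid $A_{\omega_V}$ over $M$, as recalled in Section \ref{sec:fibrewisedprequantization}. This condition is already built into the target crossed module $\Gamma(A_{\omega_F})\xrightarrow{\ad}\Der(A_{\omega_F})$ in the statement, since $A_{\omega_F}$ is itself assumed to be a prequantization; hence \eqref{eq:closed2:1} need not be translated into any structural condition on the pair $(A,B)$.

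Next, on a trivializing chart $\Psi_\mathcal{U}:p^{-1}(\mathcal{U})\simeq \mathcal{U}\times F$, I would use the integration formula \eqref{eq:integration:2form} to produce a primitive $\Delta_\mathcal{U}:\mathcal{U}\to\Omega^1(F)$ with $\omega_V+\partialV\Delta_\mathcal{U}=\omega_F$, thereby identifying $A_{\omega_V}$ locally with (the pullback of) $A_{\omega_F}$. On such a chart $(\Psi_\mathcal{U},\Delta_\mathcal{U})$ the preceding Proposition applies verbatim: the remaining equations \eqref{eq:closed2:2}, \eqref{eq:closed2:3}, \eqref{eq:closed2:4} translate, via the local correspondences $\D\loc A$ and $\omega\loc B$, respectively into $A\in\Omega^1(\mathcal{U},\Der(A_{\omega_F}))$, the vanishing of the fake curvature, and the vanishing of the curvature $3$-form. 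The converse direction is obtained simply by reading these correspondences in reverse and invoking the same Proposition.

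The one step I expect to take genuine care is clarifying the status of the gauge twist $\Delta_\mathcal{U}$: it is not a defect of the trivialization but a legitimate part of the data of a chart for $2$-connections, since the $2$-group target naturally accommodates such gauge transformations. This has already been argued in the paragraph preceding the Proposition, where the transformation $\omega\mapsto\omega+\d\Delta_\mathcal{U}$ is shown to act coherently on all three components $(\omega_V,\alpha_H,\omega_H)$ in a manner consistent with the prescription $A=(a,\alpha+\partialH\Delta_\mathcal{U})$ for the local connection $1$-form. Once this is granted, the corollary is just the preceding Proposition read back globally.
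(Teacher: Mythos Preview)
Your proposal is correct and follows the paper's intended approach: the paper presents this corollary without proof, introducing it simply as ``the bottom line'' of the preceding Proposition, and your argument makes explicit exactly the organizational step needed---absorbing \eqref{eq:closed2:1} into the prequantization data and then invoking the Proposition chart by chart for \eqref{eq:closed2:2}--\eqref{eq:closed2:4}. Your discussion of the role of $\Delta_{\mathcal{U}}$ as part of the chart data is also faithful to the paper's treatment in the paragraphs preceding the Proposition.
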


%%%%%%%%%%%%%%%%%%%%%%%%%%%%%%%%%%%%%%%%%%%%%%%%%%%%%%%%%%%%%%%%%%%%%%%%%
%%%%%%%%%%%%%%%%%%%%%%%%%%%%%%%%%%%%%%%%%%%%%%%%%%%%%%%%%%%%%%%%%%%%%%%%
%%%%%%%%%%%%%%%%%%%%%%%%%%%%%%%%%%%%%%%%%%%%%%%%%%%%%%%%%%%%%%%%%%%%%%%%\\
%%%%%%%%%%%%%%%%%%%%%%%%%%%%%%%%%%%%%%%%%%%%%%%%%%%%%%%%%%%%%%%%%%%%%%%%\\
%%%%%%%%%%%%%%%%%%%%%%%%%%%%%%%%%%%%%%%%%%%%%%%%%%%%%%%%%%%%%%%%%%%%%%%%\\
%%%%%%%%%%%%%%%%%%%%%%%%%%%%%%%%%%%%%%%%%%%%%%%%%%%%%%%%%%%%%%%%%%%%%%%%\\
%%%%%%%%%%%%%%%%%%%%%%%%%%%%%%%%%%%%%%%%%%%%%%%%%%%%%%%%%%%%%%%%%%%%%%%%
%%%%%%%%%%%%%%%%%%%%%%%%%%%%%%%%%%%%%%%%%%%%%%%%%%%%%%%%%%%%%%%%%%%%%%%%
\section{Closed 3-forms on a fibration}\label{sec:phi:decomposition}
%%%%%%%%%%%%%%%%%%%%%%%%%%%%%%%%%%%%%%%%%%%%%%%%%%%%%%%%%%%%%%%%%%%%%%%%%
%%%%%%%%%%%%%%%%%%%%%%%%%%%%%%%%%%%%%%%%%%%%%%%%%%%%%%%%%%%%%%%%%%%%%%%%
%%%%%%%%%%%%%%%%%%%%%%%%%%%%%%%%%%%%%%%%%%%%%%%%%%%%%%%%%%%%%%%%%%%%%%%%\\
%%%%%%%%%%%%%%%%%%%%%%%%%%%%%%%%%%%%%%%%%%%%%%%%%%%%%%%%%%%%%%%%%%%%%%%%\\
%%%%%%%%%%%%%%%%%%%%%%%%%%%%%%%%%%%%%%%%%%%%%%%%%%%%%%%%%%%%%%%%%%%%%%%%\\
We now apply the construction of the Section \ref{Sec:Leray:Serre} in order to study closed $3$-forms defined on the total space of a complete fibration $M\to B$, first focusing on $2$-plectic forms.

%\subsection{Decomposition of a 3-form}
For a $3$-form $\Phi\in\Omega^3(M)$, the decomposition \eqref{eq:filtration:forms} reads:
\begin{equation}\label{eq:phi:decomposition}
 \Phi=\Phi_V\oplus{\alpha_H^\star}\oplus\omega^\star_{H}\oplus\Phi_{H}.
\end{equation}
More explicitly, in this expression:
\begin{itemize}
 \item $\Phi_V$ is a vertical $3$-form, \emph{i.e.} a $3$-form on the fibres: $\Phi_V\in \Omega^3(\Ver)$,
 \item ${\alpha_H^\star}$ is a form with values in vertical $2$-forms: $\alpha^\star_{H}\in\Omega^1(B,\Omega^2(\Ver))$,
 \item $\omega^\star_{H}$ is a $2$-form with values in vertical forms: $\omega^\star_{H}\in\Omega^2(B, \Omega^1(\Ver))$,
 \item $\Phi_{H}$ is a $3$-form with values in functions: $\Phi_{H}\in\Omega^3(B,C^{\infty}(M))$.
\end{itemize}
Moreover, $\Phi$ is closed \emph{iff} the following compatibility conditions hold:
\begin{eqnarray}
 \partialV\Phi_V                                         \hspace{120pt}             &=&0, \label{eq:phi:closed1}\\
 \partialH\Phi_V+\partialV{\alpha_H^\star}               \hspace{80pt}             &=&0,\label{eq:phi:closed2}\\
 \partialC\Phi_V+\partialH{\alpha_H^\star}+\partialV\omega^\star_H   \hspace{41pt} &=&0,\label{eq:phi:closed3}\\
                 \partialC{\alpha_H^\star}+\partialH\omega_H^\star+\partialV\Phi_H \hspace{1pt}    &=&0,\label{eq:phi:closed4}\\
                                           \partialC\omega_H^\star+\partialH\Phi_H &=&0.\label{eq:phi:closed5}
\end{eqnarray}
The first two equations above can be interpreted as follows:
\begin{itemize}
 \item $\Phi_V$ is vertically closed, we get a cohomology class in the vertical De Rham cohomology $[\Phi_V]_1\in H^3(\Ver)$. We will think of
  $[\Phi_V]_1$ as a section of a vector bundle over $B$;
 \item the covariant derivative of $\Phi_V$ takes values in exact forms for which ${\alpha_H^\star}$ provides primitives. It follows that $[\Phi_V]_1$ is
 a parallel section of $H^3(\Ver)$ as a flat vector bundle.
More precisely, given a complete vector field $u\in\X(B)$, we obtain by integrating \eqref{eq:phi:closed2} the following formula:
\begin{equation} \label{integration:holonomy:phi}
 \bigl(\phi^{h(u)}_t\bigr)_*\Phi_V=\Phi_V+\d_V\Bigl(\,\int_0^t (\phi^{h(u)}_s)_*\alpha^\star_H(u) \d s\Bigr). 
\end{equation}
\end{itemize}
We will postpone the interpretation of the other equations to a later section. Instead, let us first look at the case where $\Phi_V$ defines a $2$-plectic
 form on the fibres.

%%%%%%%%%%%%%%%%%%%%%%%%%%%%%%%%%%%%%%%%%%%%%%%%%%%%%%%%%%%%%%%%%%%%%%
%%%%%%%%%%%%%%%%%%%%%%%%%%%%%%%%%%%%%%%%%%%%%%%%%%%%%%%%%%%%%%%%%%%%%%
%%%%%%%%%%%%%%%%%%%%%%%%%%%%%%%%%%%%%%%%%%%%%%%%%%%%%%%%%%%%%%%%%%%%%%
%%%%%%%%%%%%%%%%%%%%%%%%%%%%%%%%%%%%%%%%%%%%%%%%%%%%%%%%%%%%%%%%%%%%%%
\subsection{2-plectic fibrations}%%%%%%%%%%%%%%%%%%%%%%%%%%%%%%%%%%%%%
%%%%%%%%%%%%%%%%%%%%%%%%%%%%%%%%%%%%%%%%%%%%%%%%%%%%%%%%%%%%%%%%%%%%%%
%%%%%%%%%%%%%%%%%%%%%%%%%%%%%%%%%%%%%%%%%%%%%%%%%%%%%%%%%%%%%%%%%%%%%%
%%%%%%%%%%%%%%%%%%%%%%%%%%%%%%%%%%%%%%%%%%%%%%%%%%%%%%%%%%%%%%%%%%%%%%
%%%%%%%%%%%%%%%%%%%%%%%%%%%%%%%%%%%%%%%%%%%%%%%%%%%%%%%%%%%%%%%%%%%%%%
For higher dimensional fields theories, some models use multiplectic, rather than symplectic forms. For $n=2$, which corresponds to string theory, we obtain a categorification of symplectic geometry. This was applied for instance to the dynamics of  a classical string in \cite{BHR}, and it is natural to try to look at $2$-plectic fibrations, trying to reproduce the discussion of symplectic fibrations previously exposed. First, let us first briefly recall some basic notions concerning  $2$-plectic geometry.
\begin{definition}%%%%%%%%%%%%%%%%%%%%%%%%%%%%%%%%%%%%%%%%%%%%%%%%%%%%%%%%%%%%%%%%%%%%%%
A \textbf{$2$-plectic form} on a manifold $M$ is a closed $3$-form $\Phi$ which is non-degenerate, in the sense that the following property holds:
$$i_X\Phi=0 \Rightarrow X=0, \quad \forall X\in TM.$$
\end{definition}%%%%%%%%%%%%%%%%%%%%%%%%%%%%%%%%%%%%%%%%%%%%%%%%%%%%%%%%%%%%%%%%%%%%%%
Thus a closed $3$-form is $2$-plectic if the contraction induces an injection $TM\hookrightarrow \wedge^2 T^*M, X\mapsto i_X\Phi$. For a $2$-plectic form, the notion of hamiltonian function is replaced by hamiltonian $1$-forms:
\begin{definition}%%%%%%%%%%%%%%%%%%%%%%%%%%%%%%%%%%%
 A $1$-form $\alpha$ is \textbf{hamiltonian} if there exists a vector field $X$ on $M$ such that:
$$-i_X\Phi=\d\alpha,$$
\end{definition}%%%%%%%%%%%%%%%%%%%%%%%%%%%%%%%%%%%
Note that unlike in symplectic geometry, not every $1$-form yields a hamiltonian, as $\d \alpha$ shall lie in the image of the contraction. However, if $\alpha$ is, then the corresponding \emph{hamiltonian vector field} $X$ is unique. Moreover its flow preserves $\Phi$.

Let us now look at a basic notion of fibration for $2$-plectic forms: we want them to be fibration with 
the extra data of a $2$-plectic form on the fibres, such that those identify $2$-plectically with each other:
\begin{definition}%%%%%%%%%%%%%%%%%%%%%%%%%%%%%%%%%%
A  \textbf{fibration with $2$-plectic fibres} is a fibration $M\to B$ together with a vertically closed non-degenerate $2$-form $\Phi_V\in\Omega^3(\Ver)$:
\begin{align*}
 \partialV\Phi_V&=0,\\
\forall X_V\in \Ver,\ i_{X_V}\Phi_V&=0\  \Rightarrow X_V=0.
\end{align*}
\end{definition}%%%%%%%%%%%%%%%%%%%%%%%%%%%%%
\begin{definition}%%%%%%%%%%%%%%%%%%%%%%%%%%%%%%%%%%%%%%%%%%%%%%%%%%%%%%%%%%%%%%%%%%%%%%
In that case, a \textbf{compatible}, or $2$-\textbf{plectic connection} is a connection whose parallel
 transport preserves $\Phi_V$:
\begin{equation*}
 \partialH \Phi_V=0.
\end{equation*}
\end{definition}%%%%%%%%%%%%%%%%%%%%%%%%%%%%%%%%%%%%%%%%%%%%%%%%%%%%%%%%%%%%%%%%%%%%%%
\begin{definition}%%%%%%%%%%%%%%%%%%%%%%%%%%%
 A \textbf{$2$-plectic fibration} is a fibration with $2$-plectic fibres that admits a compatible connection.
\end{definition}%%%%%%%%%%%%%%%%%%%%%%%%%%%%%
It is tempting here to reproduce the discussion of symplectic fibrations.
 For that matter, if we just start with a $3$-form $\Phi$ whose restriction to the fibres is $2$-plectic, then there is no direct analogue of Prop. \ref{prop:induced:symp:connection}. The problem is that there may not be a connection for which the component $\alpha_H^\star$ vanishes. Note however that if such a connection (\emph{i.e.} one with $\alpha_H^\star=0$) exists  then it is unique. We will be more restrictive and introduce the following notion, which will be enough for
   basic situations (see sub-section \ref{basicexamples}).

\begin{definition}\label{defn:fibres:2plectically}%%%%%%%%%%%%%%%%%%%%%%%%%%%%%%%%%%%
 A  $3$-form $\Phi\in\Omega^3(M)$ defined on the total space of a fibration $M\to B$ is said to fibre $2$-plectically if the following conditions hold:
\begin{enumerate}[i)]
 \item $\text{Graph}(\Phi)\cap \Ver \oplus \wedge^2 \Ver^0=\{0\}$,
 \item $\text{Graph}(\Phi)\cap\wedge^2\Ver\oplus\Ver^0=\{0\}$.
\end{enumerate}\end{definition}%%%%%%%%%%%%%%%%%%%%%%%%%%%%%%%%%%%%%%%%%%%%%%%%%%%%%%
Here we considered successively $\Phi$ as an application $i_-\Phi:TM\to \wedge^2T^*M$ and $i_-\Phi:\wedge^2 TM\to T^*M$. With this definition, we can reproduce the discussion of symplectic fibration:
%%%%%%%%%%%%%%%%%%%%%%%%%%%%%%%%%%%%%%%%%%%%%%%%%%%%%%%%%%%%%%%%%%%%%%
\begin{proposition}\label{prop:2plectic:connection}%%%%%%%%%%%%%%%%%%%%%%%%%%
%%%%%%%%%%%%%%%%%%%%%%%%%%%%%%%%%%%%%%%%%%%%%%%%%%%%%%%%%%%%%%%%%%%%%%
 Let $\Phi\in\Omega^3(M)$ be a closed form that fibres $2$-plectically over $B$. Then:
\begin{itemize}
 \item $\Phi_V$ induces a $2$-plectic form on the fibres of $M\to B$,
 \item the $2$-plectic fibration $(M\to B,\Phi_V)$ admits a $2$-plectic connection,
 \item this connection has hamiltonian curvature.
\end{itemize}%%%%%%%%%%%%%%%%%%%%%%%%%%%%%%%%%%%%%%%%%%%%%%%%%%%%%%%%%
\end{proposition}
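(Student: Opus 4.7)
The plan is to follow the strategy of Proposition~\ref{prop:induced:symp:connection} in the $2$-plectic setting. In the symplectic case, non-degeneracy of $\omega|_\Ver$ provided the canonical horizontal distribution $\Hor=\Ver^{\perp\omega}$; the natural analog for $2$-plectic fibres is the intrinsic subspace
\[
\Hor:=\{X\in TM : \Phi(X,Y_1,Y_2)=0\ \text{for all}\ Y_1,Y_2\in\Ver\}=\ker\bigl(\psi\colon TM\to\wedge^2\Ver^*\bigr),
\]
with $\psi(X)(Y_1,Y_2):=\Phi(X,Y_1,Y_2)$. By formula~\eqref{splitkforms}, the $(1,2)$-component $\alpha_H^\star$ of $\Phi$ in the decomposition~\eqref{eq:phi:decomposition} is precisely the restriction of $i_{h(\cdot)}\Phi$ to $\wedge^2\Ver$, so by construction $\alpha_H^\star=0$ for this choice of $\Hor$.

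For the first bullet, closedness $\partialV\Phi_V=0$ is the first of the closure equations~\eqref{eq:phi:closed1}, while non-degeneracy of $\Phi_V$ on each fibre follows from condition~(ii) of Definition~\ref{defn:fibres:2plectically}: if $X\in\Ver$ satisfies $i_X\Phi_V=0$, then for any $Y\in\Ver$ one has $i_{X\wedge Y}\Phi_V=i_Yi_X\Phi_V=0$, so (ii) forces $X\wedge Y=0$ for every $Y$ and hence $X=0$. As an immediate byproduct this gives injectivity of $\psi|_\Ver=\Phi_V^\flat$, i.e.\ the transversality $\Hor\cap\Ver=\{0\}$ (of which condition~(i) is the direct incarnation).

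The main technical step I foresee is to show that $\Hor$ is actually a smooth sub-bundle complementary to $\Ver$, i.e.\ that $\psi$ has pointwise rank exactly $\dim\Ver$. Injectivity of $\psi|_\Ver$ gives $\mathrm{rank}(\psi)\geq\dim\Ver$; the reverse inequality is where (i) and (ii) combine, via a pointwise linear-algebra argument on the paired contractions $\Ver\to\wedge^2\Ver^*$ and $\wedge^2\Ver\to\Ver^*$ induced by $\Phi_V$, which forces $\mathrm{image}(\psi)=\mathrm{image}(\psi|_\Ver)$. I expect this bookkeeping to be the subtlest part of the proof.

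Once $\Hor$ is in place and $\alpha_H^\star=0$, the closure equations~\eqref{eq:phi:closed2}--\eqref{eq:phi:closed3} collapse to
\[
\partialH\Phi_V=0,\qquad \partialC\Phi_V+\partialV\omega_H^\star=0.
\]
The first is exactly the statement that $\Hor$ is a $2$-plectic connection, yielding the second bullet. Expanding the second in bidegree $(2,2)$ via the definitions of $\partialC$ and $\partialV$ gives $-i_{C(u,v)}\Phi_V=\d_V\omega_H^\star(u,v)$ for every $u,v\in\X(B)$, which is precisely the statement that each curvature vector $C(u,v)\in\Gamma(\Ver)$ is hamiltonian with hamiltonian $1$-form $\omega_H^\star(u,v)$, establishing the third bullet.
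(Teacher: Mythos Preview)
Your approach is essentially the paper's: set $\Hor=\ker\psi$, show $\alpha_H^\star=0$ for this choice, and read off the conclusions from the closure equations. Two points of divergence are worth noting, however.

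First, you have the roles of conditions (i) and (ii) reversed. Condition~(i) is \emph{exactly} the statement that $\psi|_\Ver$ is injective, i.e.\ that $\Phi_V$ is fibrewise non-degenerate; your derivation of non-degeneracy from (ii) via $X\wedge Y=0$ is correct but roundabout, and your parenthetical remark already concedes that (i) is the direct incarnation.

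Second, and more substantively, the step you flag as ``the subtlest part'' --- showing $\mathrm{rank}(\psi)=\dim\Ver$, equivalently $\im\psi=\im(\psi|_\Ver)$, equivalently $\ker\psi+\Ver=TM$ --- is precisely where condition~(ii) alone enters, and the paper dispatches it with a one-line duality argument rather than unspecified bookkeeping. Namely, the transpose ${}^\top\psi:\wedge^2\Ver\to T^*M$ is $\xi\mapsto i_\xi\Phi$, and condition~(ii) gives $\im{}^\top\psi\cap\Ver^0=\{0\}$; since $(\ker\psi)^0=\im{}^\top\psi$, taking annihilators yields $\ker\psi+\Ver=TM$. So (i) gives $\Hor\cap\Ver=\{0\}$ and (ii) gives $\Hor+\Ver=TM$, cleanly and separately. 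Your phrasing ``(i) and (ii) combine via paired contractions induced by $\Phi_V$'' obscures this clean split.
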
%%%%%%%%%%%%%%%%%%%%%%%%%%%%%%%%%%%%%%%%%%%%%%%%%%%%%%%%%%%%
\begin{proof}
 The fact that $\Phi_V$ is $2$-plectic on the fibres is a direct consequence of \eqref{eq:phi:closed1} and $i)$ in Def. \ref{defn:fibres:2plectically}. In order to see that there exists a $2$-plectic connection, it is enough by \eqref{eq:phi:closed2} to pick a connection such that
 $\alpha_H^\star$ vanishes. For this, we consider
$\eta:TM\to \wedge^2\Ver^*,\ X\mapsto i_X\Phi|_{\wedge^2\Ver},$
and observe that:
\begin{enumerate}[i)]
 \item $\text{Graph}(\Phi)\cap \Ver \oplus \wedge^2 \Ver^0=\{0\}\iff \ \ker \eta\cap \Ver =\{0\}$,
 \item $\text{Graph}(\Phi)\cap\wedge^2\Ver\oplus\Ver^0=\{0\}\iff  \im ^\top\eta\cap \Ver^0=\{0\}$.
\end{enumerate}
Taking duals, we see that the condition ii) is equivalent to $\ker \eta+\Ver=TM$ and we conclude that $TM=\ker \eta\oplus \Ver$. Thus by choosing $\Hor:=\ker \eta$, we obtain a connection for which the component $\alpha_H^\star$ of $\Phi$ vanishes, as easily seen by comparing ii) with \eqref{splitkforms}. This connection has hamiltonian curvature as can be seen by setting $\alpha_H^\star=0$ in \eqref{eq:phi:closed3}.
\end{proof}

%%%%%%%%%%%%%%%%%%%%%%%%%%%%%%%%%%%%%%%%%%%%%%%%%%%%%%%%%%%%%%%%%%%%%%
\begin{remark}%%%%%%%%%%%%%%%%%%%%%%%%%%%%%%%%%%%%%%%%%%%%%%%%%%%%%%%%%%%
%%%%%%%%%%%%%%%%%%%%%%%%%%%%%%%%%%%%%%%%%%%%%%%%%%%%%%%%%%%%%%%%%%%%%%
Notice that the condition $\alpha_H^\star=0$ is stronger than just requiring the connection to preserve $\Phi_V$. 
In fact, we see by \eqref{eq:phi:closed2} that $\Phi_V$ is preserved if and only if $\alpha_H^\star$ takes values in closed forms. If we only required the restriction of $\Phi$ to be $2$-plectic (condition i)) one may discuss the existence of a compatible connection as follows:
\begin{itemize}
\item first we notice that given a fibration with $2$-plectic fibres, the existence of $\alpha_H^\star$ such that  \eqref{eq:phi:closed2} holds is independent
 of the choice of the connection. Indeed if $h'=h+\Delta$ is another connection (where $\Delta\in \Omega^1(B,\Gamma(\Ver)))$ then we have:
\begin{align*}(\partial_{H'}\Phi_V)(u)=&(\partialH \Phi_V)(u)+\Lie_{\Delta(u)}\Phi_V\\
                                      =&\partialV \alpha_H^\star(u)+\partialV\circ i_{\Delta(u)}\Phi_V
= \partialV (\alpha_H^\star+i_{\Delta}\Phi_V)(u).
\end{align*}
\item then we observe from the above computation that $h'$ is compatible \emph{iff} $\alpha_H^\star$ takes values
 in ''closed forms modulo sections of $\im \Phi_V^\flat$''.
\end{itemize}
In the sequel we will restrict ourselves to the case where a $2$-plectic connection exists.
 
\end{remark}%%%%%%%%%%%%%%%%%%%%%%%%%%%%%%%%%%%%%%%%%%%%%%%%%%%%%%%%%%%%%%%%%%%%%%

\subsection{Obstruction theory}
Let us now start with a $2$-plectic fibration and look for obstruction to a closed extension.
%%%%%%%%%%%%%%%%%%%%%%%%%%%%%%%%%%%%%%%%%%%%%%%%%%%%%%%%%%%%%%%%%%%%%%%%%%%%%%%%%%%%
\begin{proposition}\label{prop:2plect:obstruction1}%%%%%%%%%%%%%%%%%%%%%%%%%%%%%%%%%%%%%%%%
%%%%%%%%%%%%%%%%%%%%%%%%%%%%%%%%%%%%%%%%%%%%%%%%%%%%%%%%%%%%%%%%%%%%%%%%%%%%%%%%%%%%
 Let $(M\to B,\Phi_V)$ be a $2$-plectic fibration. Then there is class $[\partialC\Phi_V]_2\in H^2(B,H^2(\Ver))$ which is a first obstruction to the existence of a closed extension of $\Phi_V$.
\end{proposition}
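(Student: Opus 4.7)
My plan mirrors the proof of Proposition \ref{prop:obstr:omegaclosed1}. First, I verify the class is well-defined. Since the chosen connection is 2-plectic, $\partialH\Phi_V = 0$, and $\Phi_V$ is vertically closed by assumption. The third commutation relation in \eqref{eq:commutation:relations} gives
\[
\partialV \circ \partialC\Phi_V \;=\; -\partialH\circ\partialH\Phi_V - \partialC\circ\partialV\Phi_V \;=\; 0,
\]
so $[\partialC\Phi_V]_1 \in \Omega^2(B, H^2(\Ver))$ is well-defined. The fourth commutation relation then yields $\partialH\circ\partialC\Phi_V = -\partialC\circ\partialH\Phi_V = 0$, exhibiting $[\partialC\Phi_V]_1$ as a $\partialH$-cocycle and producing the desired class $[\partialC\Phi_V]_2 \in H^2(B, H^2(\Ver))$.

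Next I check the obstruction property. Suppose $\Phi\in\Omega^3(M)$ is a closed extension of $\Phi_V$. Decomposing $\Phi = \Phi_V \oplus \alpha_H^\star \oplus \omega_H^\star \oplus \Phi_H$ with respect to the chosen 2-plectic connection, the equations \eqref{eq:phi:closed1}--\eqref{eq:phi:closed5} all hold. Since $\partialH\Phi_V = 0$, equation \eqref{eq:phi:closed2} forces $\partialV\alpha_H^\star = 0$, so $\alpha_H^\star$ takes values in vertically closed 2-forms and defines a representative $[\alpha_H^\star]_1 \in \Omega^1(B, H^2(\Ver))$. Equation \eqref{eq:phi:closed3} then reads $\partialC\Phi_V = -\partialH\alpha_H^\star - \partialV\omega_H^\star$, so modulo vertically exact terms we get $[\partialC\Phi_V]_1 = -\partialH[\alpha_H^\star]_1$ in $\Omega^2(B, H^2(\Ver))$, which shows $[\partialC\Phi_V]_2 = 0$ in $H^2(B, H^2(\Ver))$.

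The remaining delicate point, which I expect to be the main obstacle, is verifying that the class $[\partialC\Phi_V]_2$ depends only on the 2-plectic fibration and not on the particular 2-plectic connection. By analogy with Proposition \ref{symp:connections:affine}, any two such connections differ by $\Delta \in \Omega^1(B, \Gamma_{\Phi_V}(\Ver))$, where $\Gamma_{\Phi_V}(\Ver)$ denotes the vertical vector fields preserving $\Phi_V$; the corresponding curvatures satisfy $C' = C + \partialH\Delta + [\Delta_\wedge\Delta]$. Since each $\Delta(u)$ preserves $\Phi_V$, Cartan's formula together with $\partialV\Phi_V = 0$ shows that $i_\Delta\Phi_V$ takes values in vertically closed 2-forms. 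A careful computation, using $\Lie_{h(u)}\Phi_V = (\partialH\Phi_V)(u) = 0$ to control the Lie-derivative contributions, then shows that $\partialCp\Phi_V - \partialC\Phi_V$ equals $-\partialH(i_\Delta\Phi_V)$ up to $\partialV$-exact and quadratic-in-$\Delta$ correction terms of the same structure, so $[\partialCp\Phi_V]_2 = [\partialC\Phi_V]_2$ in $H^2(B, H^2(\Ver))$. This is essentially the intrinsic character of the $d_2$-differential in the Leray--Serre spectral sequence, but the bookkeeping with the bi-graded Lie derivative operators is where the care is needed.
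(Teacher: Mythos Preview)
Your argument for well-definedness of $[\partialC\Phi_V]_2$ is identical to the paper's, and your check that a closed extension forces the class to vanish is correct. The paper's proof differs only in emphasis: instead of assuming a full closed extension and reading off \eqref{eq:phi:closed2}--\eqref{eq:phi:closed3}, it unpacks the vanishing of $[\partialC\Phi_V]_2$ directly as the \emph{equivalence} with the existence of $\alpha_H^\star\in\Omega^1(B,\Omega^2_{cl}(\Ver))$ and $\omega_H^\star\in\Omega^2(B,\Omega^1(\Ver))$ satisfying \eqref{eq:phi:closed2} and \eqref{eq:phi:closed3}. This is the form needed as input for Proposition~\ref{prop:2plect:obstruction2}, so the paper's phrasing feeds more cleanly into the sequential obstruction theory, whereas yours establishes only the necessary-condition direction. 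Your added paragraph on independence of the $2$-plectic connection is not in the paper; it is a reasonable point and your sketch via $C'=C+\partialH\Delta+[\Delta_\wedge\Delta]$ and $i_\Delta\Phi_V\in\Omega^1(B,\Omega^2_{cl}(\Ver))$ is on the right track, though as you anticipate the bookkeeping for the quadratic term would need to be spelled out to make it a proof.
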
%%%%%%%%%%%%%%%%%%%%%%%%%%%%%%%%%%%%%%%%%%%%%%%%%%%%%%%%%%%%%%%%%%%%%%%%%%
\begin{proof}%%%%%%%%%%%%%%%%%%%%%%%%%%%%%%%%%%%%%%%%%%%%%%%%%%%%%%%%%%%%%%%%%%%%%%%
 First we use the commutation relations to check that
$$-\partialV\circ\partialC\Phi_V=\partialC\circ\partialV \Phi_V+\partialH\circ\partialH\Phi_V=0,$$
Thus $[\partialC\Phi_V]_1$ is a well defined element in $\Omega^2(B,H^2(\Ver))$. Then we compute that
$\partialH[\partialC\Phi_V]_1=[\partialH\circ\partialC\Phi_V]_1=[\partialC\circ\partialH\Phi_V]_1=0,$ which ensures that the class $[\partialC\Phi_V]_2\in H^2(B,H^2(\Ver))$ is well defined.

This class vanishes \emph{iff} there exists $[\alpha_H^\star]_1\in\Omega^1(B,H^2(\Ver))$ such that:
 $$-[\partialC\Phi_V]_1=\partialH[-\alpha_H^\star]_1,$$
where $\alpha_H^\star\in\Omega^1(B,\Omega^2_{cl}(\Ver))$ takes values in \emph{closed} forms. This last equality lies in $\Omega^2(B,H^1(\Ver))$,
 therefore $-\partialC\Phi_V $ and $\partialH \alpha_H^\star$ need only to coincide up to a $2$-form with values in \emph{exact} $2$-forms.
 So there exists  $\omega_H^\star\in\Omega^2(B,\Omega^1(\Ver))$ such that:
$$-\partialC\Phi_V=\partialH\alpha_H^\star+\partialV\omega_H^\star.$$

Thus we see that the vanishing of the class $[\partialC\Phi_V]_2$ is equivalent to the existence
 of solutions $\alpha_H^\star,\omega_H$ to the equations \eqref{eq:phi:closed2} and \eqref{eq:phi:closed3}.\end{proof}

\begin{remark}
Notice that the  Proposition \ref{prop:2plect:obstruction1} may produce $3$-forms that do not fibre $2$-plectically.
From this point of view, the definition \ref{defn:fibres:2plectically} seems quite restrictive for the problem of existence of a closed extension.
\end{remark}

%%%%%%%%%%%%%%%%%%%%%%%%%%%%%%%%%%%%%%%%%%%%%%%%%%%%%%%%%%%%%%%%%%%%%%%%%%%%
\begin{proposition}\label{prop:2plect:obstruction2}%%%%%%%%%%%%%%%%%%%%%%%%%%%%%%%%
%%%%%%%%%%%%%%%%%%%%%%%%%%%%%%%%%%%%%%%%%%%%%%%%%%%%%%%%%%%%%%%%%%%%%%%%%%%%
Let $(M\to B,\Phi_V)$ be a $2$-plectic fibration. If the obstruction of Prop. \ref{prop:2plect:obstruction1} vanishes, then for any solution $\alpha_H^\star,\omega_H^\star$ as above, there is a second obstruction
 $[\partialH\omega_H^\star+\partialC\alpha_H^\star]_1\in\Omega^3(B,H^1(\Ver))$ to the existence of a closed extension of $\Phi_V$ with $\alpha_H^\star,\omega_H^\star$ as prescribed components.
\end{proposition}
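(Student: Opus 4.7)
The plan is to mimic the argument used in Proposition \ref{prop:2plect:obstruction1}: first check that $\partialC\alpha_H^\star+\partialH\omega_H^\star$ is $\partialV$-closed so that the class $[\partialC\alpha_H^\star+\partialH\omega_H^\star]_1\in\Omega^3(B,H^1(\Ver))$ makes sense, and then verify that a closed extension forces the class to vanish.

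For well-definedness, I would compute $\partialV\bigl(\partialC\alpha_H^\star+\partialH\omega_H^\star\bigr)$ using the commutation relations \eqref{eq:commutation:relations}. Rewriting $\partialV\partialC=-\partialC\partialV-\partialH\partialH$ and $\partialV\partialH=-\partialH\partialV$ yields
\[
\partialV\bigl(\partialC\alpha_H^\star+\partialH\omega_H^\star\bigr)
=-\partialC(\partialV\alpha_H^\star)-\partialH\partialH\alpha_H^\star-\partialH(\partialV\omega_H^\star).
\]
Then I substitute the standing assumptions \eqref{eq:phi:closed2}, namely $\partialV\alpha_H^\star=-\partialH\Phi_V$, and \eqref{eq:phi:closed3}, namely $\partialV\omega_H^\star=-\partialC\Phi_V-\partialH\alpha_H^\star$. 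The two $\partialH\partialH\alpha_H^\star$ contributions cancel, and the remaining terms collapse to $(\partialC\partialH+\partialH\partialC)\Phi_V$, which vanishes by the fourth line of \eqref{eq:commutation:relations}. Hence $[\partialC\alpha_H^\star+\partialH\omega_H^\star]_1$ is a well-defined element of $\Omega^3(B,H^1(\Ver))$.

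For the obstruction statement, suppose $\Phi=\Phi_V\oplus\alpha_H^\star\oplus\omega_H^\star\oplus\Phi_H$ is closed with the prescribed components $\alpha_H^\star$ and $\omega_H^\star$. Then equation \eqref{eq:phi:closed4} reads $\partialC\alpha_H^\star+\partialH\omega_H^\star=-\partialV\Phi_H$, which is tautologically a $\partialV$-coboundary. Thus $[\partialC\alpha_H^\star+\partialH\omega_H^\star]_1=0$, proving that non-vanishing of this class obstructs the existence of such a closed extension.

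No step looks genuinely hard; the only subtlety is keeping track of signs in the commutation relations and making sure I am using exactly \eqref{eq:phi:closed2}--\eqref{eq:phi:closed3} to eliminate the vertical derivatives of $\alpha_H^\star$ and $\omega_H^\star$. Note also that, as in Proposition \ref{prop:2plect:obstruction1}, this is only a necessary condition: the vanishing of this class produces $\Phi_H$ satisfying \eqref{eq:phi:closed4}, but \eqref{eq:phi:closed5} would give rise to a further (third) obstruction, which is not addressed by this proposition.
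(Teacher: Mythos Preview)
Your proof is correct and follows essentially the same line as the paper: use the commutation relations \eqref{eq:commutation:relations} together with \eqref{eq:phi:closed2}--\eqref{eq:phi:closed3} to show $\partialV$-closedness, then read off the obstruction from \eqref{eq:phi:closed4}. The only cosmetic difference is that the paper uses $\partialV\alpha_H^\star=0$ directly (since $\partialH\Phi_V=0$ in a $2$-plectic fibration), whereas you carry the term $-\partialH\Phi_V$ through and let it cancel via $\partialC\partialH+\partialH\partialC=0$.
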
%%%%%%%%%%%%%%%%%%%%%%%%%%%%%%%%%%%%%%%%%%%%%%%%%%
\begin{proof}%%%%%%%%%%%%%%%%%%%%%%%%%%%%%%%%%%%%%%%%%%%%%%%%%%%%%%%%%%%%%%%
So far, we have solutions to the equations \eqref{eq:phi:closed1}-\eqref{eq:phi:closed3}.
 In order to make sure that the class $[\partialH\omega_H^\star+\partialC\alpha_H^\star]_1$ is well defined,
 we check using the commutation relations that:
\begin{align*}\partialV\ (\partialH\omega_H^\star+\partialC\alpha_H^\star)
&=-\partialH\circ\partialV\omega_H^\star-\partialC\circ\partialV \alpha_H^\star-\partialH\circ\partialH\alpha_H^\star\\
&=-\partialH(\partialV\omega_H^\star+{\partialH}\alpha_H^\star)\\
&=-\partialH\circ\partialC\Phi_V\\
&=-\partialC\circ\partialH\Phi_V=0.
\end{align*}
Assume now that this class vanishes; by definition $[\partialH\omega_H^\star+\partialC\alpha_H^\star]_1=0$
 \emph{iff} there exists $\Phi_H\in\Omega^3(B,C^\infty(M))$ such that
$ \partialH\omega_H^\star+\partialC\alpha_H^\star=-\partialV\Phi_H,$
that is \emph{iff} \eqref{eq:phi:closed3} admits a solution.
\end{proof}

%%%%%%%%%%%%%%%%%%%%%%%%%%%%%%%%%%%%%%%%%%%%%%%%%%%%%%%%%%%%%%%%%%%%%%%%%%%%%%%%%%%%%%%%%%%%%%%%%%%%
\begin{proposition}%%%%%%%%%%%%%%%%%%%%%%%%%%%%%%%%%%%%%%%%%%%%%%%%%%%%%%%%%%%%%%%%%%%%%%%%%%%%%%%%%%%%%%%%
%%%%%%%%%%%%%%%%%%%%%%%%%%%%%%%%%%%%%%%%%%%%%%%%%%%%%%%%%%%%%%%%%%%%%%%%%%%%%%%%%%%%%%%%%%%%%%%%%%%%
Let $(M\to B,\Phi_V)$ be a $2$-plectic fibration with connected fibres. If the obstructions of Prop. \ref{prop:2plect:obstruction1} and \ref{prop:2plect:obstruction2} vanish, 
then for any solution $\alpha_H^\star,\omega_H^\star,\Phi_H$ as above,
 there is a third obstruction $[\partialH\Phi_H+\partialC\omega_H^\star]_1\in H^4(B)$. If it vanishes, then $\Phi_V$ admits a closed extension.\end{proposition}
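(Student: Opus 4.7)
The plan is to follow the same pattern as in Prop.~\ref{prop:obstr:omegaclosed2}. I would set $\Psi := \partialH\Phi_H + \partialC\omega_H^\star \in \Omega^4(B,\Omega^0(\Ver))$ and argue in three stages: first, that $\partialV\Psi = 0$, so that (because the fibres are connected) $\Psi$ is necessarily basic and descends to a $4$-form $\phi\in\Omega^4(B)$ with $\Psi = p^*\phi$; second, that $\d\phi = 0$, so $[\phi]\in H^4(B)$ is the well-defined obstruction class claimed in the statement; and third, that when $[\phi]=0$ the component $\Phi_H$ can be corrected to a new $\Phi_H'$ so that all five closedness equations \eqref{eq:phi:closed1}--\eqref{eq:phi:closed5} hold simultaneously.

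The verification that $\partialV\Psi = 0$ proceeds by manipulating \eqref{eq:commutation:relations} against the four equations already satisfied. Using the second and third commutation identities, one first rewrites $\partialV\Psi = -\partialH(\partialV\Phi_H+\partialH\omega_H^\star)-\partialC\partialV\omega_H^\star$; substituting \eqref{eq:phi:closed4} for the parenthetical term yields $\partialH\partialC\alpha_H^\star - \partialC\partialV\omega_H^\star$. Applying the fourth identity $\partialH\partialC=-\partialC\partialH$ and then \eqref{eq:phi:closed3} to eliminate $\partialH\alpha_H^\star+\partialV\omega_H^\star$, this collapses to $\partialC\partialC\Phi_V$, which vanishes by the last relation of \eqref{eq:commutation:relations}. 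Since the fibres are connected, any $\partialV$-closed element of $\Omega^4(B,\Omega^0(\Ver))$ is fibre-wise locally constant hence basic, so $\Psi = p^*\phi$ for a unique $\phi\in\Omega^4(B)$.

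For the closedness of $\phi$, I would observe that $\partialC$ vanishes identically on $\Omega^{p,0}$ since there are no vertical forms of negative degree; in particular $\partialC\Phi_H = 0$, and similarly $\partialC\Psi=0$ automatically. A computation entirely parallel to the previous one then shows $\partialH\Psi = -\partialC(\partialV\Phi_H+\partialH\omega_H^\star) = \partialC\partialC\alpha_H^\star = 0$, hence $p^*\d\phi = \partialH(p^*\phi)=0$ and therefore $\d\phi=0$. Finally, when $[\phi]\in H^4(B)$ vanishes, write $\phi = \d\omega_B$ with $\omega_B\in\Omega^3(B)$, and set $\Phi_H' := \Phi_H - p^*\omega_B$. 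Because $p^*\omega_B$ is basic, $\partialV\Phi_H'=\partialV\Phi_H$, so equations \eqref{eq:phi:closed1}--\eqref{eq:phi:closed4} continue to hold; and \eqref{eq:phi:closed5} now reads $\partialC\omega_H^\star + \partialH\Phi_H' = \Psi - p^*\phi = 0$. Hence $\Phi := \Phi_V \oplus \alpha_H^\star \oplus \omega_H^\star \oplus \Phi_H'$ is the desired closed extension of $\Phi_V$.

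The only real difficulty is the bookkeeping in the two telescoping computations that establish $\partialV\Psi=0$ and $\partialH\Psi=0$: each requires chaining together four of the five identities of \eqref{eq:commutation:relations} in the correct order, interleaved with substitutions from \eqref{eq:phi:closed3} and \eqref{eq:phi:closed4}. This is the direct higher-degree analogue of the combinatorial step that appeared in the proof of Prop.~\ref{prop:obstr:omegaclosed2}, and no genuinely new ingredient is needed; the interest of the statement lies in the fact that the third obstruction genuinely sits in $H^4(B)$ and thus vanishes under comparatively mild topological hypotheses on the base.
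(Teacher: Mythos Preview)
Your proof is correct and follows essentially the same approach as the paper: show $\partialV\Psi=0$ to descend to $B$, show $\partialH\Psi=0$ to get closedness, then correct $\Phi_H$ by a basic $3$-form when the class vanishes. In fact you give considerably more detail than the paper does---the paper's proof merely asserts that the two vanishing statements follow ``using the relations \eqref{eq:commutation:relations}'' without writing out the telescoping chain you spell out, so your version is a genuine expansion of the same argument rather than an alternative to it.
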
%%%%%%%%%%%%%%%%%%%%%%%%%%%%%%%%%%%%%%%%%%%%%%%%%%%%%%%%%%%%%%%%
\begin{proof}%%%%%%%%%%%%%%%%%%%%%%%%%%%%%%%%%%%%%%%%%%%%%%%%%%%%%%%%%%%%%%%%%%%
We first check using the relations \eqref{eq:commutation:relations} that 
$\partialV(\partialH\Phi_H+\partialC\omega_H^\star)=0.$
Since $p$ has connected fibres, this implies that $\partialH\Phi_H+\partialC\omega_H^\star$ takes values in
 functions that are constant along the fibres, so it is is the pull-back of a $3$-form on $B$. Using \eqref{eq:commutation:relations} again, we obtain $\partialH (\partialH\Phi_H+\partialC\omega_H^\star)=0$.
 Since on pulled-back forms, $\partialH$ coincides with the De Rham differential on $B$, we obtain a class in $H^4(B)$.

This class vanishes \emph{iff}  $\partialH\Phi_H+\partialC\omega_H^\star=-\partialH \Phi_B$
 for some $\Phi_B\in\Omega^3(B)$. Moreover, we notice that \eqref{eq:phi:closed4} remains true if we replace
 $\Phi_H$ by $\Phi_H+\Phi_B$, while \eqref{eq:phi:closed5} then holds. We conclude that
 $\Phi=\Phi_V\oplus\alpha_H^\star\oplus\omega_H^\star\oplus\Phi_H$ defines a closed $3$-form on $M$.
\end{proof}

%%%%%%%%%%%%%%%%%%%%%%%%%%%%%%%%%%%%%%%%%%%%%%%%%%%%%%%%%%%%%%%%%%%%%%%%%%%%%%%%%%
%%%%%%%%%%%%%%%%%%%%%%%%%%%%%%%%%%%%%%%%%%%%%%%%%%%%%%%%%%%%%%%%%%%%%%%%%%%%%%%%%%
\subsection{Basic examples of 2-plectic fibrations}\label{basicexamples}%%%%%%%%%%%%%%%%%%%%%%%%%%%%%%%%%%%%%%%%%%%%%%%%%%%%%%%%%%%%%
%%%%%%%%%%%%%%%%%%%%%%%%%%%%%%%%%%%%%%%%%%%%%%%%%%%%%%%%%%%%%%%%%%%%%%%%%%%%%%%%%%
%%%%%%%%%%%%%%%%%%%%%%%%%%%%%%%%%%%%%%%%%%%%%%%%%%%%%%%%%%%%%%%%%%%%%%%%%%%%%%%%%%

%%%%%%%%%%%%%%%%%%%%%%%%%%%%%%%%%%%%%%%%%%%%%%%%%%%%%%%%%%%%%
\subsubsection{Canonical 2-plectic fibrations}%%%%%%%%%%%%%
%%%%%%%%%%%%%%%%%%%%%%%%%%%%%%%%%%%%%%%%%%%%%%%%%%%%%%%%%%%%%
 Here we want to reproduce the construction of canonical symplectic fibrations \cite[ex. 2.1]{GLS}. We shall prove that there is a $2$-plectic fibration associated with an arbitrary fibration $M\to B$, namely $\wedge^2 \Ver^*\to B$.

 First we need to recall the construction of canonical forms in $2$-plectic geometry: considered as a manifold, $\wedge^2 T^*\!F$ comes naturally endowed with a $2$-form $\omega_{F}^{2}$ defined by $(\omega_{F}^{2})_\eta:=(p^2_{F})^*\eta$, where 
        $\eta\in\wedge^2T^*\!F$ and $p^2_{F}:\wedge^2T^*\!F\to F$ denotes the projection.

 The canonical $3$-form $\Phi_F^2$ on $\wedge^2 T^*\!F$ is then obtained  by differentiation:
 $\Phi_F^{2}:=\d \omega_F^2$, and turns turns out to be $2$-plectic. Furthermore, it is easily checked that $\omega_F^2$ satisfies the following fundamental property:
 for any $2$-form $\eta\in \Omega^k(F)$ seen as an application $\eta:F\hookrightarrow \wedge^2 T^*F$, we have $\eta^*\omega_F^2=\eta.$ It follows that $\eta\in \Omega^k(F)$ is closed \emph{iff} $\eta^* \Phi^2_F=0$, which is a universal property.

We now look at a fibred version of this construction. Given $p:M\to B$, one may consider $\wedge^2 \Ver^*\!\!\to B$ as a fibration as well, obtained by composing $p$ with the projection  $p^2_{V}:\Lambda^2\Ver^*\to M$. Clearly, if $M\to B$ has fibre type $F$, then $\wedge^2 \Ver^*\!\!\to B$ has fibre type $\wedge^2 T^*\!F$. From this point of view $\wedge^2 \Ver^*\!\!\to B$ comes naturally equipped with a vertical $2$-form $\omega^2_V$ defined by:
$$(\omega_{V}^{2})_{\eta_V}:=(p^2_{F})^*\eta_V,
        \text{ where }\eta_V\in\wedge^2\Ver^*.$$
This form has the fundamental property that $\eta_V^*\omega_F^2=\eta_V$
for any $\eta_V\in\Omega^2(\Ver^*)$, seen as an application $\eta_V:M\to \wedge^2\Ver^*$. 
Thus it differentiates to a vertical $3$-form $\Phi^2_V:=\d^{V^2}\omega^{2}_V$ with the property that $\partialV\eta_V=0$ \emph{iff} $\eta_V^*\Phi_V^2=0$. Here, we denoted by $\d^{V^2}$ the vertical De Rham differential with respect to the projection $\wedge^2 \Ver^*\to B$.
 
  As the following theorem shows, $\Phi^2_V$ can always be extended into an \emph{exact} $3$-form on the total space $\wedge^2 \Ver^*$.
%%%%%%%%%%%%%%%%%%%%%%%%%%%%%%%%%%%%%%%%%%%%%%%%%%%%%%%%%%%%%%%%%%%%%%%%%%%%%%%%%%%%%%%%%%%
%%%%%%%%%%%%%%%%%%%%%%%%%%%%%%%%%%%%%%%%%%%%%%%%%%%%%%%%%%%%%%%%%%%%%%%%%%%%%%%%%%%%%%%%%%%
\begin{theorem}\label{thm:canonical:2plectic}%%%%%%%%%%%%%%%%%%%%%%%%%%%%%%%%%%%%%
%%%%%%%%%%%%%%%%%%%%%%%%%%%%%%%%%%%%%%%%%%%%%%%%%%%%%%%%%%%%%%%%%%%%%%%%%%%%%%%%%%%%%%%%%%%
Consider a complete Ehresmann connection  $TM=\Hor\oplus \Ver$ on a fibration $M\xrightarrow{p} B$. 

If we denote $i_H:\wedge^2\Ver^*\hookrightarrow\Lambda^2T^*M$ the induced injection, then the pull-back $\Phi:=i_H^*\Phi^2_M$ of the fundamental $3$-form fibres $2$-plectically with respect to $\wedge^2\Ver^*\to B$.

 Moreover, if $\Hor^2$ denotes the connection induced on $\wedge^2\Ver^*\to B$ by the Proposition \ref{prop:2plectic:connection}, then the following hold:
\begin{itemize}
 \item  $\Phi$ decomposes into $\Phi:=\Phi^2_V\oplus\omega_{H^2}^\star$, where $\Phi_V^2$ denotes the canonical vertical $3$-form,
 \item the connection $\Hor^2$ naturally lifts $\Hor$, namely: a smooth path $\gamma:I\to B$ has holonomy $\wedge^2(\d\phi_\gamma^*)^{-1}:\wedge^2 T^*M_{\gamma_0}\to\wedge^2 T^*M_{\gamma_1}$ where  $\phi_\gamma:M_{\gamma_0}\to M_{\gamma_1}$, $M_{\gamma_i}:=p^{-1}(\{\gamma_i\})$ denotes the holonomy induced by $\Hor$,
 \item for any $u,v\in\X(B)$, $\omega^\star_H(u,v)$ is the canonical hamiltonian vertical $1$-form associated with the lift of $C(u,v)$ to $\wedge^2\Ver^*$.
\end{itemize}%%%%%%%%%%%%%%%%%%%%%%%%%%%%%%%%%%%%%%%%%%%%%%%%%%%%%%%%%%%%%%%%%%%%%%%%%%%%%%
\end{theorem}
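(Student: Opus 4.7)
The plan is to define $\Hor^2$ directly as the wedge-cotangent lift of $\Hor$, then verify each bullet by decomposing $\Phi=i_H^*\Phi_M^2$ using $\Hor^2$. For $u\in\X(B)$, let $h^2(u)$ be the vector field on $\wedge^2\Ver^*$ whose flow is $\wedge^2(\d\phi_t^{h(u)*})^{-1}$; since $\phi_t^{h(u)}$ permutes the fibres of $p$, this flow preserves $\wedge^2\Ver^*$ inside $\wedge^2T^*M$, and completeness of $\Hor$ ensures completeness of $\Hor^2$. By construction $\d p_V^2\circ h^2(u)=h(u)$, so the $h^2(u)$ span an Ehresmann connection $\Hor^2$ on $\wedge^2\Ver^*\to B$ lifting $\Hor$ exactly as claimed in the statement.

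A direct pointwise computation using $p_M^2\circ i_H=p_V^2$ and the fact that $i_H\eta_V\in\wedge^2T^*M$ vanishes on horizontal arguments yields
$$(i_H^*\omega_M^2)_{\eta_V}(X_1,X_2)=\eta_V\bigl(\pi_V\,\d p_V^2(X_1),\ \pi_V\,\d p_V^2(X_2)\bigr),$$
where $\pi_V:TM\to\Ver$ is the projection along $\Hor$. Because $\d p_V^2$ maps $\Hor^2$ into $\Hor$, this 2-form has no $\Omega^{1,1}$ or $\Omega^{2,0}$ component, and on the vertical bundle of $\wedge^2\Ver^*\to B$ it equals $\omega_V^2$. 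Hence $i_H^*\omega_M^2\in\Omega^{0,2}$ coincides with $\omega_V^2$, and therefore
$$\Phi=\d(i_H^*\omega_M^2)=\partialV\omega_V^2+\partialH\omega_V^2+\partialC\omega_V^2,$$
which immediately yields $\Phi_H=0$ and $\Phi_V=\partialV\omega_V^2=\Phi_V^2$ by definition, proving the first bullet.

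The vanishing $\alpha_H^\star=\partialH\omega_V^2=0$ follows from the naturality of the canonical 2-form: for any diffeomorphism $\psi:F_1\to F_2$, the map $\wedge^2(\d\psi^*)^{-1}$ intertwines $\omega_{F_1}^2$ and $\omega_{F_2}^2$ (direct consequence of the tautological definition), and applied fibrewise to $\phi_t^{h(u)}$ this gives $\Lie_{h^2(u)}\omega_V^2=0$. For the curvature term, $(\partialC\omega_V^2)(u,v)=\pm\, i_{C^2(u,v)}\omega_V^2$ where $C^2$ is the curvature of $\Hor^2$. Functoriality of the natural lift---composition of flows forces $[h^2(u),h^2(v)]=\widetilde{[h(u),h(v)]}$ and $h^2([u,v])=\widetilde{h([u,v])}$---gives $C^2(u,v)=\widetilde{C(u,v)}$, the canonical lift of $C(u,v)\in\Gamma(\Ver)$ to $\wedge^2\Ver^*$. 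Thus $\omega^\star_{H^2}(u,v)=i_{\widetilde{C(u,v)}}\omega_V^2$, which via the standard identity $\d(i_{\widetilde{X}}\omega_V^2)=-i_{\widetilde{X}}\Phi_V^2$ is the canonical vertical hamiltonian 1-form associated with $\widetilde{C(u,v)}$, yielding the last two bullets.

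It remains to verify that $\Phi$ fibres 2-plectically in the sense of Def.~\ref{defn:fibres:2plectically}. Given $\alpha_H^\star=\Phi_H=0$, condition i) reduces to the pointwise 2-plecticness of $\Phi_F^2$ on each fibre $\wedge^2T^*F_b$, which is the defining property of the canonical 3-form. Condition ii) reduces analogously to a pointwise statement on $\wedge^2T^*F_b$, and I expect this to be the main obstacle: it requires checking that $\Phi_F^2$ has trivial kernel on decomposable bivectors in $\wedge^2T(\wedge^2T^*F_b)$, a local computation in Darboux-type coordinates $(q^i,p_{jk})$ on $\wedge^2T^*F_b$ that is largely bookkeeping but must be carried out carefully to rule out accidental degeneracies in the mixed $q$-$p$ directions.
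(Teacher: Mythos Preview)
Your approach is essentially the one the paper takes: pull back $\omega_M^2$ rather than $\Phi_M^2$, observe that it has a single component $\omega_V^2\in\Omega^{0,2}$ with respect to the lifted connection $\Hor^2$, differentiate, and use naturality of the canonical form to kill $\partialH\omega_V^2$. The identification of $\omega_{H^2}^\star$ with the canonical hamiltonian $1$-form via $i_{\widetilde{C(u,v)}}\omega_V^2$ also matches the paper.

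Your one misstep is at the very end, where you flag condition~ii) of Definition~\ref{defn:fibres:2plectically} as ``the main obstacle'' requiring a coordinate computation. This is not needed: once you have established $\alpha_H^\star=0$ for $\Hor^2$, condition~ii) is automatic. Recall from the proof of Proposition~\ref{prop:2plectic:connection} that condition~ii) is equivalent to $\ker\eta+\Ver^2=T(\wedge^2\Ver^*)$, where $\eta(X)=i_X\Phi|_{\wedge^2\Ver^2}$. But $\alpha_H^\star=0$ says precisely that $\Hor^2\subset\ker\eta$, hence $\ker\eta+\Ver^2\supset\Hor^2\oplus\Ver^2=T(\wedge^2\Ver^*)$. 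Combined with condition~i) (the fibrewise $2$-plecticness of $\Phi_F^2$, which you correctly note is standard), this also forces $\ker\eta=\Hor^2$ by a dimension count, confirming that $\Hor^2$ is exactly the connection produced by Proposition~\ref{prop:2plectic:connection}. The paper leaves this implicit; no Darboux-type computation is involved.
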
%%%%%%%%%%%%%%%%%%%%%%%%%%%%%%%%%%%%%%%%%%%%%%%%%%%%%%%%%%%%%%%%%%%%%%%%%%%%%%%%%%
\begin{proof}%%%%%%%%%%%%%%%%%%%%%%%%%%%%%%%%%%%%%%%%%%%%%%%%%%%%%%%%%%%%%%%%%%%%%%%%%%%%%%
Rather than pulling-back $\Phi^2_M$ directly, we shall first pull-back the fundamental $2$-form $\omega_M^2$, and then differentiate it.  In fact, since the injection $i_H:\wedge^2\Ver^*\hookrightarrow \wedge^2 T^*M$
 has image $\wedge^2 \Hor^0=\{\alpha\in\Omega^2(M),\ i_v\alpha=0\ \forall v\in\Hor\}$, we easily deduce the following simple expression for the pull-back of $\omega^2_M$:
 $$(i^*_H\omega^2_M)_{\eta_V}(a_\wedge b)=\eta_V\bigl(\d p_V^2(a)_V{}_\wedge\d p_V^2(b)_V\bigr).$$
 Here $a,b$ are vectors tangent to $\wedge^2\Ver^*$ at $\eta_V$, and $\d p_V^2(a)_V,\d p_V^2(b)_V$ denote the vertical components of $\d p_V^2(a),\d p_V^2(b)$ in the decomposition $TM=\Ver\oplus\Hor$. It follows easily that the pull-back $i^*_\H\omega^2_M$
 restricts to $\omega_V^2$ on the fibres of $\wedge^2\Ver^*\to B$. In fact, one can observe further that $i^*_\H\omega^2_M$ has a single component in 
 $\Omega^{0,2}$ provided we choose a connection on $\wedge^2\Ver^*\to B$ whose image by $\d p_V^2$ is $\Hor$, as is clearly the case for $\Hor^2$ described by the second item.
 
  We obtain this way a simple decomposition with respect to $\Hor^2$, namely
$  i^*_\H\omega^2_M=\omega_V^2\oplus 0 \oplus 0 \oplus 0.$ Thus, using the formulas for the differential we see that:
\begin{equation*}i^*_H \Phi_M^2=\d^{V^2}\omega_V^2 \oplus \d^{H^2}\omega^2_V \oplus \d^{C^2}\omega_V^2 \oplus 0.
\end{equation*}
 Observe yet that $\Phi$ has no component in $\Omega^3(B,C^\infty(\Lambda^2\Ver^*))$. Furthermore, we have $\d^{H^2}\omega^2_V=0$. In fact, $\omega_V^2$ is invariant by its universal properties since $\H^2$ lifts $\H$ (see \cite[Thm. 2.2.1]{GLS} in the case of symplectic fibrations). We end up with $\Phi=\Phi^2_V\oplus 0  \oplus \d^{C^2}\omega_V^2 \oplus 0$, which is enough to prove that $\Hor^2$ is indeed the connection as induced by Prop. \ref{prop:2plectic:connection}.
 
We now explain why the last statement is a consequence of the expression for $\omega_{H^2}^\star:=\d^{C^2}\omega_V^2$. Here we recall that $\omega_{H^2}^\star$ is supposed to prescribe hamiltonian $1$-forms for the curvature.

In symplectic geometry, any vector field on $F$ lifts into a hamiltonian vector field $X^1$
 on $T^*\!F$ with a canonical choice of a hamiltonian function $J_{X^1}:T^*\!F\to \mathbb{R}$  given by
 $J_{X^1}(\alpha)=\langle \alpha, X\rangle$. There is an alternative formula for $J$ which is $J_{X^1}:=i_{X^1}\Theta_F^1$ where $\Theta^1_F$ denotes the fundamental $1$-form on $T^*\!F$.

Similarly in $2$-plectic geometry, a vector field $X$ lifts into a $2$-plectic vector field $X^2$ on $\wedge^2T^*\!F$, with a canonical choice of hamiltonian $1$-form  $J_{X^2}\in \Omega^1(\wedge^2 T^*\!F)$ given by $\langle (J_{X^2})_\eta, u \rangle:=\eta(X,\d p (u))$.
An alternative notation for this equation is the following: $J_{X^2} =i_{X^2}\omega_F^2$.

Back to our situation, the vector field $C^2(u,v)$ on $\wedge^2\Ver^*$ lifts the vertical vector field $C(u,v)$ since $\Hor^2$ lifts $\Hor$. 
As such, it is a $2$-plectic vector field on each fibre, for which we have a canonical choice of a hamiltonian $1$-form given by
 $i_{C^2(u,v)}\omega_V^2=(\d^{C^2}\omega_V^2)(u,v)$. Therefore we see that $\omega_{H^2}^\star$ simply prescribes the canonical choice of
 hamiltonian $1$-form.
\end{proof}

%%%%%%%%%%%%%%%%%%%%%%%%%%%%%%%%%%%%%%%%%%%%%%%%%%%%%%%%%%%%%%%%%%%%%%%%%%%%%%
%%%%%%%%%%%%%%%%%%%%%%%%%%%%%%%%%%%%%%%%%%%%%%%%%%%%%%%%%%%%%%%%%%%%%%%%%%%%%%
%%%%%%%%%%%%%%%%%%%%%%%%%%%%%%%%%%%%%%%%%%%%%%%%%%%%%%%%%%%%%%%%%%%%%%%%%%%%%%
\subsubsection{Compact semi-simple groups}%%%%%%%%%%%%%%%%%%%%%%%%%%%%%%%%
%%%%%%%%%%%%%%%%%%%%%%%%%%%%%%%%%%%%%%%%%%%%%%%%%%%%%%%%%%%%%%%%%%%%%%%%%%%%%%
%%%%%%%%%%%%%%%%%%%%%%%%%%%%%%%%%%%%%%%%%%%%%%%%%%%%%%%%%%%%%%%%%%%%%%%%%%%%%%%%%%%%%%%%%%%%%%%%%%%%%%%%%
Consider a compact semi-simple Lie group $G$ with bi-invariant product $\langle\ ,\ \rangle_\g$ and denote $\Phi\in\Omega^3(G)$ the corresponding bi-invariant $2$-plectic form: $$\Phi_e(x,y,z)=\langle x,[y,z] \rangle_\g,$$
 where $x,y,z\in\g$ and $e\in G$ denotes the identity.
 
 If $K$ is a semi-simple subgroup of $G$, then $\Phi$ fibres $2$-plectically with respect to  $p:G\to G/K$. Indeed, $\Phi$ restricts to a $2$-plectic form on the left cosets as $K$ is assumed to be semisimple. Furthermore, since $G$ is compact, the product $\langle\ ,\  \rangle_\g$ is definite \emph{positive}. Thus by setting $\h:=\mathfrak{k}^\perp$ we obtain a splitting
$\g=\mathfrak{k}\oplus \h,$
where $\mathfrak{k}$ denotes the Lie algebra of $K$.  At the identity, we easily compute that:
$\Phi_e(h,k_1,k_2)=\langle h,[k_1,k_2] \rangle_\g=0.$
It immediately follows that $\Hor_g:=(L_g)_* \h$ defines an Ehresmann connection on $M:=G\to G/K$ for which the component $\alpha_H^\star$ vanishes.

For instance, $SU_3$ fibers $2$-plectically over $S^5$ with fiber type $SU_2$, as easily seen by seen by realizing $SU_2$ as the isotropy of $(1,0,0)\in \mathbb{C}^3$ for the standard representation.  More generally, $SU_n$ fibers over $S^{2n-1}$ with fiber-type $SU_{n-1}$.

%%%%%%%%%%%%%%%%%%%%%%%%%%%%%%%%%%%%%%%%%%%%%%%%%%%%%%%%%%%%%%%%%%%%%%%%%%%%%%%%%%%%%%%%%%%
%%%%%%%%%%%%%%%%%%%%%%%%%%%%%%%%%%%%%%%%%%%%%%%%%%%%%%%%%%%%%%%%%%%%%%%%%%%%%%%%%%%%%%%%%%%
\subsubsection{3-forms on bundles associated to hamiltonian actions}%%%%%%%%%%%%%%%%%%%%%
%%%%%%%%%%%%%%%%%%%%%%%%%%%%%%%%%%%%%%%%%%%%%%%%%%%%%%%%%%%%%%%%%%%%%%%%%%%%%%%%%%%%%%%%%%%
%%%%%%%%%%%%%%%%%%%%%%%%%%%%%%%%%%%%%%%%%%%%%%%%%%%%%%%%%%%%%%%%%%%%%%%%%%%%%%%%%%%%%%%%%%%

The aim of this section is to produce closed $3$-forms on the total space of bundles associated to various notions of hamiltonian actions of a Lie group $G$.

In the sequel, $G$ will denote a Lie group and $P$ a principal $G$-bundle, for which we have fixed a principal connection $\theta:TP\to\g$.
We will denote $\omega_\theta$ its curvature. In each case below, $\theta$ descends to an Ehresmann connection on the associated bundle $M:=P\times_G F$,
as usual we denote $\Hor$ and $\partialH$ the corresponding horizontal sub-bundle and covariant derivative.

\begin{ex}\emph{Hamiltonian $2$-plectic fibrations.}
The following is a basic generalization of the Theorem \ref{thm:classical:yangmills} for $2$-plectic fibrations. We consider a situation where:
\begin{itemize}
 \item $G$ acts smoothly on a $2$-plectic manifold $(F,\Phi_F)$ with equivariant moment map  $J:\g\to\Omega^1(M)$, which means that $-i_{\xi_F}\Phi_F=\d J(\xi).$
\end{itemize}
Then the associated bundle $M$ comes equipped with a closed $3$-form that fibres $2$-plectically over $B$. In fact, all the arguments for $2$-forms easily carry through: first there is connection induced by $\theta$ on the associated bundle.
 Then, one can define a vertical $3$-form $\Phi_V$ by pulling-back $\Phi_F$ in a local coordinate chart. Since $\Phi_F$ is $G$-invariant, $\Phi_V$
  is well-defined and invariant by the induced connection.
   Furthemore  $\omega_H^\star(u,v):=\langle J ,\omega_\theta(u,v)\rangle$ is well-defined because $J$ is equivariant, and satisfies $\partialH\omega_H^\star=0$.

Here, we recover a decomposition $\Phi=\Phi_V\oplus 0 \oplus  \omega_H^\star \oplus 0$ completely analogous to the classical
 coupling of $2$-forms, where $\omega=\omega_V\oplus 0 \oplus \omega_H$.
\end{ex}

\begin{ex}\label{ex:quasi:twisted:presymplecic}\emph{Twisted presymplectic fibrations.}
Assume that  $G$ is equipped with a bi-invariant product $\langle\hspace{5pt} ,\ \rangle_\g$, and denote $\Phi^G\in \Omega^3(G)$ the corresponding invariant $3$-form. In that situation, there is a notion of $G$-valued moment map \cite[def 2.2]{AMM} for quasi-hamiltonian spaces (see also \cite[ex 3.14]{BuCr} where these spaces are interpreted in terms of Dirac realizations).

A quasi-hamiltonian space is a manifold $F$ equipped with a $2$-form $\omega_F$, together with an action of $G$ and a map $J:F\to G$ so that the following conditions are satisfied:
\begin{align*}
 & i)  &  \d \omega_F&=-J^*\Phi^G,  \\
 & ii) &   \Lie_{\xi_F}\omega_F&=0, \\
 & iii)& i_{\xi_F}\omega_F&=J^*\circ\sigma,\\
 & iv) & J:F\to &\ G\text{ is $G$-equivariant,}\quad\quad\quad\quad\quad\quad\\
 & v)  & \ker (\omega_F)_x&=\g_{J(x)}.                
\end{align*}
Here, $\sigma:\g\to \Omega^1(G),$ is given by $ \xi\to \frac{1}{2}\langle\xi_R+\xi_L,\ \rangle_\g$ where $\xi^L,\xi^R$ denote respectively left and right invariant vector fields associated wih $\xi$ on $G$.
In that situation, the associated bundle $M:=P\times_G F$ inherits of a vertical $2$-form $\omega_V$ induced by $\omega_F$ and we may just set $\omega:=\omega_V\oplus 0 \oplus 0$, then  consider its differential $\Phi:=\d \omega$ (similarly to the situation in the proof of Thm. \ref{thm:canonical:2plectic}).

Since $\omega_F$ is $G$-invariant by the condition $ii)$ we have $\partialH\omega_V=0$ therefore
 $\Phi$ decomposes into $\Phi=\Phi_V\oplus 0 \oplus \omega_H^\star \oplus 0$ with  $\Phi_V=\partialV\omega_V$ and $\omega_H^\star=\partialC\omega_V$. By the condition $iii)$ we easily see that $\omega_H^\star=\langle J,  \omega_\theta\rangle$. Notice here the analogy with the decomposition $\omega=\omega_V\oplus 0 \oplus \omega_H$ for hamiltonian fibrations.
\end{ex}

\section{Gauge symmetries of a closed 3-form}\label{sec:gaugegeneral3forms}
In this section, we will interpret the equations for a $3$-form to be closed in terms of gauge symmetries of a structure, namely an exact Courant structure defined fibre-wise, in the same spirit as in section \ref{sec:gaugegeneral2forms}. We shall recall the few basic facts we need concerning Courant algebroids, however the interested reader may refer to \cite{Roy3} concerning their role in mathematical physics, described in terms of graded supermanifolds.
%%%%%%%%%%%%%%%%%%%%%%%%%%%%%%%%%%%%%%%%%%%%%%%%%%%%%%%%%%%%%%%%%%%%%%%%%%%%%%%%%%%%%
%%%%%%%%%%%%%%%%%%%%%%%%%%%%%%%%%%%%%%%%%%%%%%%%%%%%%%%%%%%%%%%%%%%%%%%%%%%%%%%%%%%%%
\subsection{Closed 3-forms and Courant algebroids}%%%%%%%%%%%%%%%%%%%%%%%%%%%%%%%%%
\label{sec:forms:courantalgebroids}%%%%%%%%%%%%%%%%%%%%%%%%%%%%%%%%%%%%%%%%%%%%%%%%%%
%%%%%%%%%%%%%%%%%%%%%%%%%%%%%%%%%%%%%%%%%%%%%%%%%%%%%%%%%%%%%%%%%%%%%%%%%%%%%%%%%%%%%
%%%%%%%%%%%%%%%%%%%%%%%%%%%%%%%%%%%%%%%%%%%%%%%%%%%%%%%%%%%%%%%%%%%%%%%%%%%%%%%%%%%%%
%%%%%%%%%%%%%%%%%%%%%%%%%%%%%%%%%%%%%%%%%%%%%%%%%%%%%%%%%%%%%%%%%%%%%%%%%%%%%%%%%%%%%
There is a geometric structure naturally associated with the De Rham class of a closed $3$-form, namely an exact Courant algebroid;
 these may be thought of as the infinitesimal analogue of a gerbe \cite{SeWe}. Let us briefly recall these notions.

An exact Courant algebroid $\mathcal{C}$ is a vector bundle fitting into an exact of vector bundles:
$$T^*F\hookrightarrow \mathcal{C}\stackrel{\sharp}{\twoheadrightarrow} TF,$$
together with a bracket $[\ ,\ ]:\Gamma(\C)\times \Gamma(\C)\to \Gamma(\C)$ defined on the space of smooth sections, and a fibre-wise
 symmetric non degenerate pairing $\langle\hspace{5pt},\ \rangle^+:\C\times\C\to \mathbb{R}$, such that the following conditions are satisfied:
\begin{align*}
[\alpha,[\beta,\gamma]]&=[[\alpha,\beta],\gamma]+[\beta,[\alpha,\gamma]],\\
\sharp[\alpha,\beta]   &=[\sharp,\alpha,\sharp\beta],\\
[\alpha,f\beta]        &=f[\alpha,\beta]+(\Lie_{\sharp \alpha}f) \beta,\\
\Lie_{\sharp\alpha}\langle\beta,\gamma\rangle^+ &=\langle[\alpha,\beta],\gamma\rangle^++\langle\beta,[\alpha,\gamma]\rangle^+,\\
[\alpha,\alpha]        &=\d^*\langle\alpha,\alpha\rangle^+.
\end{align*}
 for all $\alpha,\beta,\gamma\in\Gamma(\C)$. Here $d^*:=\frac{1}{2}\sharp^*\circ\d: C^\infty(F)\to \Gamma(\C)$ and we used $\langle\ ,\ \rangle$
 in order to identify $\C$ and $\C^*$.

 The basic motivation for introducing a geometric structure with such a set of axioms is that these encode  a $3$-class in the De Rham cohomology, called the \u{S}evera class, in a canonical way.
 The construction is similar to the one we exposed in section \ref{sec:forms:algebroids} for prequantization Lie algebroids: any splitting $TF\hookrightarrow \mathcal{C}$ gives
 an identification $\mathcal{C}=TF\oplus T^*F$ with anchor $\sharp(X,\alpha)=X$. Bracket and pairing respectively take the form:
\begin{align*}
  \bigl[(X,\alpha),(Y,\beta)\bigr]_{\Phi_F}=&\bigl([X,Y],\Lie_X \beta-i_Y\d\alpha+i_{X_\wedge Y} \Phi_F \bigr),\\
 \langle(X,\alpha),(Y,\beta)\rangle^+=&\frac{1}{2}\bigl(i_{X}\beta+i_{Y}\alpha\bigr),
\end{align*}
where $\Phi_F$ is a closed $3$-form (see \cite{BCG}) and different splittings give rise to \emph{cohomologous} $3$-forms. Reciprocally,
 given a closed $3$-form $\Phi_F$, the above formulas induce on $TF\oplus T^*\!F$ a structure of exact Courant algebroid which we may denote
 $\mathcal{C}_{\Phi_F}=TF {\scriptstyle\mathop{\ltimes}\limits_{\Phi_F}} T^*\!F.$

Again, the fact that different splittings lead to cohomological $3$-forms is crucial for us in view of \eqref{eq:phi:closed1}. Indeed, though the parallel transport might not preserve the vertical $3$-form $\Phi_V$, in order for \eqref{eq:phi:closed1} to hold, the exact Courant algebroid structure induced by $\Phi_V$ shall still be preserved.  The formula \eqref{integration:holonomy:phi} even suggests that the term $\alpha_H^\star$
shall be integrated as a part of a connection. In order to understand the remaining equations \eqref{eq:phi:closed2}-\eqref{eq:phi:closed5} we will proceed as in the case of $2$-forms, first looking at the symmetries of an exact Courant algebroid.

%%%%%%%%%%%%%%%%%%%%%%%%%%%%%%%%%%%%%%%%%%%%%%%%%%%%
%%%%%%%%%%%%%%%%%%%%%%%%%%%%%%%%%%%%%%%%%%%%%%%%%%%%%%%
%%%%%%%%%%%%%%%%%%%%%%%%%%%%%%%%%%%%%%%%%%%%%%%%%%%%%%%%
%%%%%%%%%%%%%%%%%%%%%%%%%%%%%%%%%%%%%%%%%%%%%%%%%%%%%%%%%%
\subsection{Symmetries of an exact Courant algebroid}%%%%%
%%%%%%%%%%%%%%%%%%%%%%%%%%%%%%%%%%%%%%%%%%%%%%%%%%%%%%%%
%%%%%%%%%%%%%%%%%%%%%%%%%%%%%%%%%%%%%%%%%%%%%%%%%%%%%%%%
%%%%%%%%%%%%%%%%%%%%%%%%%%%%%%%%%%%%%%%%%%%%%%%%%%%%%%%
In this section, we shall recall the results of Bursztyn-Cavalcanti-Gualtieri and refer to \cite{BCG} for more details. Recall that an
 \emph{infinitesimal symmetry} $D$ of a Courant algebroid $\C$ with symbols $X$ is an application $D:\Gamma(\C)\to \Gamma(\C)$ satisfying:
\begin{align*}
 D(f\alpha)&= fD(\alpha)+\Lie_X (f) \alpha\\
 D\bigl([\alpha,\beta]\bigr)&= [D\alpha,\beta\bigr]+\bigl[\alpha,D\beta\bigr],\\
 \Lie_X \langle\alpha,\beta\rangle^+&=\langle D\alpha,\beta\rangle^++\langle\alpha,D\beta\rangle^+.
\end{align*}
Notice that by the first equation above, $D$ has an associated linear vector field on $\C$ that projects onto $X$.
 The other equations express the compatibility with the structure on $\C$. The following is the analogue of Thm. \ref{thm:der:2form},
 it may be proved similarly:
%%%%%%%%%%%%%%%%%%%%%%%%%%%%%%%%%%%%%%%%%%%%%%%%%%%%%%%%%%%%%%%%%%%%%%%%%%%%%%%%%%%%%%%%
%%%%%%%%%%%%%%%%%%%%%%%%%%%%%%%%%%%%%%%%%%%%%%%%%%%%%%%%%%%%%%%%%%%%%%%%%%%%%%%%%%%%%%%%
\begin{theorem}\label{thm:der:3form}%%%%%%%%%%%%%%%%%%%%%%%%%%%%%%%%%%%%%%%%%%%%%%%%%%%%%%%%
%%%%%%%%%%%%%%%%%%%%%%%%%%%%%%%%%%%%%%%%%%%%%%%%%%%%%%%%%%%%%%%%%%%%%%%%%%%%%%%%%%%%%%%%
The space of infinitesimal symmetries of an exact Courant algebroid $T^*F \hookrightarrow \mathcal{C} \twoheadrightarrow TF$
 fits into an exact sequence of Lie algebras:
\begin{equation*}\label{seq:der:3form}
   \Omega^2_{cl}(F)\hookrightarrow  \Der(\mathcal{C})\twoheadrightarrow \X(F).
\end{equation*}
More precisely, given a splitting of the anchor $\sigma:TF\to \mathcal{C}$ with corresponding closed $3$-form $\Phi_F$, there is an identification
\begin{equation*}\label{id:der:3form}\Der(\mathcal{C})\simeq\Bigl\{(X,\alpha^\star)\in \X(F)\ltimes\Omega^2(F)\bigl|\Lie_X\Phi_F=\d\alpha^\star\Bigr\}.
 \end{equation*}
For this identification, the Lie algebra structure on $\Der(\C)$ and its action on 
$\mathcal{C} \simeq TF {\scriptstyle\mathop{\ltimes}\limits_{\Phi_F}} T^*F$ respectively take the following form:\vspace{-5pt}
\begin{align}\bigl[(X,\alpha^\star),(Y,\beta^\star)\bigr]=&\,\bigl([X,Y],\Lie_X\beta^\star-\Lie_Y\alpha^\star\bigr),\\
\label{eq:split:derivation:action:3form}(X,\alpha^\star)\triangleright (Y,\beta)=&\,\bigl(\Lie_X Y ,\Lie_X \beta+i_Y\alpha^\star\bigr).
\end{align}%%%%%%%%%%%%%%%%%%%%%%%%%%%%%%%%%%%%%%%%%%%%%%%%%%%%%%%%%%%%%%%%%%%%%%%%%%%%%%%%%%%%%%%%%%%%%%%%%%%%%%%%%%%%%%%%
\end{theorem}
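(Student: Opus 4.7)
The plan is to mirror the proof of Theorem \ref{thm:der:2form}, replacing the Poisson-type calculation with a Cartan calculus on differential forms adapted to the twisted Courant bracket. Fix a splitting $\sigma:TF\to\C$ so that $\C\simeq TF\oplus T^*F$ with bracket twisted by $\Phi_F=[\sigma,\sigma]-\sigma[\ ,\ ]$ and the standard symmetric pairing $\langle\ ,\ \rangle^+$. A derivation $D$ with symbol $X_0$ covers the action $Y\mapsto\Lie_{X_0}Y$ on $TF$ via the anchor, so the first Leibniz axiom together with $\sharp\circ D=[X_0,\sharp(\cdot)]$ forces $D$ to preserve the subbundle $T^*F=\ker\sharp$ and to have the block form
\begin{equation*}
D(Y,\beta)=\bigl(\Lie_{X_0}Y+B(\beta),\ C(Y)+\Lie_{X_0}\beta\bigr)
\end{equation*}
with $C:TF\to T^*F$ and $B:T^*F\to TF$ both $C^\infty(F)$-linear bundle maps.

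Next I would use the pairing compatibility $\Lie_{X_0}\langle\cdot,\cdot\rangle^+=\langle D\cdot,\cdot\rangle^++\langle\cdot,D\cdot\rangle^+$. After cancelling the $\Lie_{X_0}$-diagonal terms, this collapses to
$i_YC(X)+i_XC(Y)+i_{B\alpha}\beta+i_{B\beta}\alpha=0$
for all $X,Y,\alpha,\beta$. Setting $\alpha=\beta=0$ shows $C$ is skew, hence $C(Y)=i_Y\alpha^\star$ for a unique $\alpha^\star\in\Omega^2(F)$; setting $X=Y=0$ shows $B$ corresponds to a bivector on $F$. To kill this bivector I would test the bracket-Leibniz axiom on exact 1-forms: evaluating $D[(0,df),(0,dg)]=[D(0,df),(0,dg)]+[(0,df),D(0,dg)]$ with the twisted bracket yields $(0,\Lie_{B(df)}dg)=0$, i.e.\ $d\bigl(B(df)\cdot g\bigr)=0$ for all smooth $f,g$, which forces $B=0$ by $C^\infty$-linearity and the local spanning property of exact 1-forms.

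Having established the identification $D=(X_0,\alpha^\star)$ with the action \eqref{eq:split:derivation:action:3form}, the remaining condition $\Lie_{X_0}\Phi_F=d\alpha^\star$ is extracted by imposing the bracket-Leibniz axiom on a general pair $((Y,\beta),(Z,\gamma))$. After unwinding $D[(Y,\beta),(Z,\gamma)]_{\Phi_F}$ and $[D(Y,\beta),(Z,\gamma)]_{\Phi_F}+[(Y,\beta),D(Z,\gamma)]_{\Phi_F}$, the $\Lie$-commutator identities $[\Lie_{X_0},\Lie_Y]=\Lie_{[X_0,Y]}$ and $[\Lie_X,i_Y]=i_{[X,Y]}$, together with $\Lie_X\alpha^\star-di_X\alpha^\star=i_Xd\alpha^\star$, cancel nearly everything, leaving precisely
\begin{equation*}
i_{Y\wedge Z}\bigl(\Lie_{X_0}\Phi_F-d\alpha^\star\bigr)=0
\end{equation*}
for all $Y,Z$, which is the desired equation.

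The bracket formula on $\Der(\C)$ is then immediate from computing the commutator of two such derivations directly on a section $(Y,\beta)$, the $\Phi_F$-terms cancel by symmetry, and one reads off $\bigl[(X,\alpha^\star),(Y,\beta^\star)\bigr]=([X,Y],\Lie_X\beta^\star-\Lie_Y\alpha^\star)$. Exactness of the sequence follows tautologically: projection on the symbol is surjective because any $X\in\X(F)$ lifts to $(X,0)$ whenever $\Lie_X\Phi_F$ is exact (and locally always is), and the kernel consists of pairs $(0,\alpha^\star)$ with $d\alpha^\star=0$, i.e.\ $\Omega^2_{cl}(F)$. The main obstacle is organizing the Cartan calculus cleanly in the key bracket identity; the delicate point is isolating the term that kills the bivector $B$, since the pairing axiom alone cannot distinguish $B$ from zero.
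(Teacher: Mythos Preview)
Your approach is essentially the one the paper intends (it defers to the proof of Theorem~\ref{thm:der:2form} and to \cite{BCG}), and your Cartan-calculus verification of the bracket condition is correct: the residual term is indeed $i_{Y\wedge Z}(\Lie_{X_0}\Phi_F-\d\alpha^\star)$. Two points need repair, however.

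First, your block form is mis-specified. The anchor compatibility $\sharp\circ D=[X_0,\sharp(\cdot)]$ that you invoke already forces the $TF$-component of $D(Y,\beta)$ to be exactly $\Lie_{X_0}Y$, so $B=0$ immediately; indeed you yourself say $D$ preserves $T^*F=\ker\sharp$, which is the statement $B=0$. What is genuinely missing from your ansatz is the block $E:T^*F\to T^*F$, so that a priori $D(Y,\beta)=(\Lie_{X_0}Y,\,C(Y)+\Lie_{X_0}\beta+E(\beta))$. The pairing axiom then yields $i_ZC(Y)+i_YC(Z)+i_ZE(\beta)+i_YE(\gamma)=0$; taking $\beta=\gamma=0$ gives $C$ skew, while taking $Y=0,\gamma=0$ gives $i_ZE(\beta)=0$ for all $Z,\beta$, hence $E=0$. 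Your detour through the bracket on exact $1$-forms to ``kill the bivector $B$'' is therefore unnecessary, and the actual r\^ole of the pairing is to eliminate $E$.

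Second, your surjectivity argument is not right as stated: $(X,0)$ lies in $\Der(\C)$ only when $\Lie_X\Phi_F=0$, not merely when it is exact, and ``locally always is'' does not produce a global section. The clean fix is Cartan's formula: since $\Phi_F$ is closed, $\Lie_X\Phi_F=\d(i_X\Phi_F)$, so $(X,\,i_X\Phi_F)$ is a global lift of any $X\in\X(F)$.
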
%%%%%%%%%%%%%%%%%%%%%%%%%%%%%%%%%%%%%%%%%%%%%%%%%%%%%%%%%%%%%%%%%%%%%%%%%%%%%%%%%%%%%%%%%%%%%%%%%%%%%%%%%%%%%%%%
Then we have the analogue of Thm. \ref{thm:adjoint:action:long:exact:sequence:2form} for Courant algebroids:
%%%%%%%%%%%%%%%%%%%%%%%%%%%%%%%%%%%%%%%%%%%%%%%%%%%%%%%%%%%%%%%%%%%%%%%%%%%%%%%%%%%%%%%%%%%%%%%%%%%%%%%%%%%%%%%%
\begin{theorem}\label{thm:adjoint:action:long:exact:sequence:3form}%%%%%%%%%%%%%%%%%%%%%%%%%%%%%%%%%%%%%%%%%%%%%%%%%
%%%%%%%%%%%%%%%%%%%%%%%%%%%%%%%%%%%%%%%%%%%%%%%%%%%%%%%%%%%%%%%%%%%%%%%%%%%%%%%%%%%%%%%%%%%%%%%%%%%%%%%%%%%%%%%%
For an exact Courant algebroid  $T^*F\hookrightarrow \mathcal{C}\twoheadrightarrow TF$,
 the ``adjoint`` action $\ad:\Gamma(\mathcal{C})\xrightarrow{} \Der(\mathcal{C})$
induces an exact sequence:\vspace{-4pt}
 \begin{equation*}\label{seq:ad:3form}
   \Omega^1_{cl}(F)\hookrightarrow \Gamma(\mathcal{C}) \xrightarrow{\ad} \Der(\mathcal{C})\twoheadrightarrow H^2(F).
 \end{equation*}
More precisely, using the identifications of Thm. \ref{thm:der:3form} we have:
$$\ad_{(X,\alpha)}=(X,i_X\omega_F-\d \alpha)\in\X(F)\ltimes\Omega^2(F).$$
\end{theorem}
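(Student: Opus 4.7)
The plan is to mimic step-by-step the proof of Theorem \ref{thm:adjoint:action:long:exact:sequence:2form}, adapted to the Courant setting with the splitting produced by Theorem \ref{thm:der:3form}. I would first fix a splitting $\sigma:TF\to \C$, so that $\C\simeq TF\ltimes_{\Phi_F}T^*F$, and directly compute the inner derivation using the explicit form of the Courant bracket. Given $(X,\alpha),(Y,\beta)\in\Gamma(\C)$, one has
\[
\ad_{(X,\alpha)}(Y,\beta)=\bigl([X,Y],\Lie_X\beta-i_Y\d\alpha+i_{X\wedge Y}\Phi_F\bigr).
\]
Comparing with the general formula \eqref{eq:split:derivation:action:3form} for how a derivation $(X,\alpha^\star)\in\Der(\C)$ acts, namely $(X,\alpha^\star)\triangleright(Y,\beta)=(\Lie_X Y,\Lie_X\beta+i_Y\alpha^\star)$, the identification $\alpha^\star:=i_X\Phi_F-\d\alpha$ falls out immediately (using $i_Y i_X\Phi_F=i_{X\wedge Y}\Phi_F$). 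This yields the claimed formula for $\ad$.

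Next I would verify that the right-hand side does lie in $\Der(\C)$, i.e.~that the compatibility condition $\Lie_X\Phi_F=\d\alpha^\star$ of Theorem \ref{thm:der:3form} is satisfied. This is just Cartan's magic formula: $\d(i_X\Phi_F-\d\alpha)=\d i_X\Phi_F=\Lie_X\Phi_F$, since $\Phi_F$ is closed. Exactness at $\Gamma(\C)$ is then transparent: $\ad_{(X,\alpha)}=0$ forces $X=0$ and $\d\alpha=0$, so $\ker\ad=\Omega^1_{cl}(F)$.

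To produce the rightmost arrow, I would define
\[
\Der(\C)\longrightarrow H^2(F),\qquad (X,\alpha^\star)\longmapsto [\,i_X\Phi_F-\alpha^\star\,].
\]
Well-definedness is guaranteed by the derivation condition $\d(i_X\Phi_F-\alpha^\star)=\Lie_X\Phi_F-\d\alpha^\star=0$. Exactness at $\Der(\C)$ is then tautological: the class $[i_X\Phi_F-\alpha^\star]$ vanishes precisely when there exists $\alpha\in\Omega^1(F)$ with $\alpha^\star=i_X\Phi_F-\d\alpha$, which by the computation above is the same as $(X,\alpha^\star)=\ad_{(X,\alpha)}$. Finally, surjectivity is immediate: any class $[\eta]\in H^2(F)$ with closed representative $\eta$ is hit by the derivation $(0,-\eta)\in\Der(\C)$, since $\Lie_0\Phi_F=0=\d(-\eta)$.

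The only mildly subtle step is the bookkeeping of signs and the identification $i_{X\wedge Y}\Phi_F=i_Y i_X\Phi_F$ in matching $\ad_{(X,\alpha)}$ against the action formula \eqref{eq:split:derivation:action:3form}; once this is in place the rest is a direct transcription of the $2$-form argument, so the proof is essentially omitted as indicated in the paper's analogous treatment.
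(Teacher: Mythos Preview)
Your proposal is correct and follows exactly the approach the paper intends: mimic the proof of Theorem \ref{thm:adjoint:action:long:exact:sequence:2form} by computing the Courant bracket explicitly, matching against the action formula \eqref{eq:split:derivation:action:3form}, and then defining the map $(X,\alpha^\star)\mapsto [i_X\Phi_F-\alpha^\star]$ to $H^2(F)$. In fact you supply more detail than the paper (which omits the proof entirely as analogous), and you correctly write $\Phi_F$ where the paper's statement has the typo $\omega_F$.
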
%%%%%%%%%%%%%%%%%%%%%%%%%%%%%%%%%%%%%%%%%%%%%%%%%%%%%%%%%%%%%%%%%%%%%%%%%%%%%%%%%%%%%%%%%%%%%%%%%%%%%%%
%%%%%%%%%%%%%%%%%%%%%%%%%%%%%%%%%%%%%%%%%%%%%%%%%%%%%%%%%%%%%%%%%%%%%%%%%%%%%%%%%%%%%%%%%%%%%%%%%%%%%%%%%%%%%%%%
Once chosen a splitting $\sigma$, one may use \eqref{eq:split:derivation:action:3form} in order to write a derivation $(X,\alpha^\star)$
 in the following matricial form: 
\begin{equation*}(X,\alpha^\star)\simeq \left[\begin{matrix}
\Lie_X &  0 \\
\alpha^\star & \Lie_X
\end{matrix}\right]\end{equation*}
Thus we see that $(X,\alpha^\star)$ preserves the splitting \emph{iff} $\alpha^\star=0$, 
 in which case $\Lie_X\Phi_F=0$. We also obtain the following expression for the inner derivations:
$$\ad_{(X,\alpha)}\simeq
 \left[\begin{matrix}
\Lie_X &  0 \\
 i_X\Phi_F-\d \alpha& \Lie_X
\end{matrix}\right]$$
Here, we can observe that the action of $(X,\alpha)\in \Gamma(\mathcal{C})$ preserves the splitting \emph{iff} $i_X\Phi_F=\d \alpha$.
In particular if $X$ vanishes, then $\alpha$ shall be closed. 

%%%%%%%%%%%%%%%%%%%%%%%%%%%%%%%%%%%%%%%%%%%%%%%%%%%%%%%%%%%%%%%%%%%%%%%%%%%%%%%%%%%%%%%%%%%%%%%%%%%%%%%%%%%%%%%%
%%%%%%%%%%%%%%%%%%%%%%%%%%%%%%%%%%%%%%%%%%%%%%%%%%%%%%%%%%%%%%%%%%%%%%%%%%%%%%%%%%%%%%%%%%%%%%%%%%%%%%%%%%%%%%%%
\subsection{Fibred Courant algebroids}%%%%%%%%%%%%%%%%%%%%%%%%%%%%%%%%%%%%%%%%%%%%%%%%%%%%%%%%%%%%%%%%%%%%%%%%%%%%%
%%%%%%%%%%%%%%%%%%%%%%%%%%%%%%%%%%%%%%%%%%%%%%%%%%%%%%%%%%%%%%%%%%%%%%%%%%%%%%%%%%%%%%%%%%%%%%%%%%%%%%%%%%%%%%%%
%%%%%%%%%%%%%%%%%%%%%%%%%%%%%%%%%%%%%%%%%%%%%%%%%%%%%%%%%%%%%%%%%%%%%%%%%%%%%%%%%%%%%%%%%%%%%%%%%%%%%%%%%%%%%%%%
Given a fibration $M\to B$, the discussion of the previous section can be generalized fibre-wise.
 One shall consider vertical exact Courant algebroids $\C_V$, defined as vector bundle extensions of the vertical bundle:\vspace{-5pt}
$$ \Ver^*\hookrightarrow \mathcal{C}_V \stackrel{\sharp_V}{\twoheadrightarrow} \Ver.$$
together with a bracket $[\hspace{5pt},\ ]:\Gamma(\CV)\times \Gamma(\CV)\to \Gamma(\CV)$ defined on the space of smooth sections  and a fibre-wise symmetric
 non degenerate pairing $\langle\ ,\ \rangle^+:\CV\times\CV\to \mathbb{R}$, such that the following conditions are satisfied:
\begin{align*}
\bigl[\alpha_V,[\beta_V,\gamma_V]\bigr]&=\bigl[[\alpha_V,\beta_V],\gamma_V\bigr]+\bigl[\beta_V,[\alpha_V,\gamma_V]\bigr],\\
\sharp_V[\alpha_V,\beta_V]   &=[\sharp_V\alpha_V,\sharp_V\beta_V],\\
[\alpha_V,f\beta_V]        &=f[\alpha_V,\beta_V]+\Lie_{\sharp_V \alpha_V}(f) \beta_V,\\
\Lie_{\sharp_V\alpha_V}\langle\beta_V,\gamma_V\rangle^+ &=\bigl\langle[\alpha_V,\beta_V],\gamma\bigr\rangle^++\bigl\langle\beta_V,[\alpha_V,\gamma_V]\bigr\rangle^+,\\
[\alpha_V,\alpha_V]        &=\d^*\langle\alpha_V,\alpha_V\rangle^+,
\end{align*}
 for all $\alpha_V,\beta_V,\gamma_V\in\Gamma(\C_V)$. Here $d^*:=\frac{1}{2}\sharp_V^*\circ\d: C^\infty(M)\to \Gamma(\CV)$ and we used $\langle\ ,\ \rangle^+$ in order to identify $\CV$ and $\CV^*$.

Then it is a routine computation to check that vertical exact Courant algebroids canonically encode a $3$-cohomology class in the vertical De Rham cohomology
 in the expected way, namely: any splitting of the anchor $\Ver\hookrightarrow\CV$ gives an identification $\CV=\Ver\oplus \Ver^*$
 with bracket $\sharp_V(X_V,\alpha_V)=  X_V.$ Anchor and pairing are given by the following formulas:
\begin{align*}
 \bigl[(X_V,\alpha_V),(Y_V,\beta_V)\bigr]_{\Phi_V}&=\bigl([X_V,Y_V],\Lie_{X_V} \beta_V-i_{Y_V}\partialV \alpha_V+i_{X_V{}_\wedge Y_V} \Phi_V \bigr)\\
\bigl\langle(X_V,\alpha_V),(Y_V,\beta_V)\bigr\rangle^+&=\frac{1}{2}\bigl(i_{X_V}\beta_V+i_{Y_V}\alpha_V\bigr),
\end{align*}
where $\Phi_V\in\Omega^3(\Ver)$ is a vertically closed $3$ form, and different splittings give rise to \emph{cohomologous}
 vertical $3$-forms. Reciprocally, given a vertically closed $3$-form $\Phi_V$, the above formulas induce on $\Ver\oplus \Ver^*$
 a structure of vertical exact Courant algebroid which we will denote  $\mathcal{C}_{\Phi_V}=\Ver {\scriptstyle\mathop{\ltimes}\limits_{\Phi_V}} \Ver^*.$

Recall now that $[\Phi_V]_1$, as a section of $H^3(\Ver)\to B$, defines an element in the Leray-Serre spectral sequence. In fact, we just proved:
\begin{theorem}\label{thm:fibredcourant:spectralelement}
For a fibration $M\to B$, the space of isomorphism classes of fibred exact Courant algebroid structures identifies with $E_1^{0,3}=H^3(\Ver)$.
\end{theorem}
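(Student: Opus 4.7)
The plan is to mirror, in the Courant setting, the argument sketched for prequantizations above Theorem \ref{thm:prequantization:spectral:element}. In fact, the discussion immediately preceding the statement already does most of the work: a splitting $\sigma_V:\Ver\to \C_V$ of the anchor yields an identification $\C_V\simeq \C_{\Phi_V}$ for a vertically closed $3$-form $\Phi_V\in\Omega^3(\Ver)$, and conversely any such $\Phi_V$ defines a fibred exact Courant algebroid $\C_{\Phi_V}=\Ver\ltimes_{\Phi_V}\Ver^*$. What remains is (i) to check that the class $[\Phi_V]_1\in H^3(\Ver)=E_1^{0,3}$ is independent of the chosen splitting, (ii) that it is an isomorphism invariant of the fibred Courant algebroid, and (iii) that the induced map on isomorphism classes is bijective.

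For (i), two splittings $\sigma_V,\sigma'_V:\Ver\to \C_V$ differ by a map $\Ver\to\Ver^*$, which via the pairing $\langle\ ,\ \rangle^+$ corresponds to $\beta_V\in\Omega^2(\Ver)$; a direct computation using the Leibniz rule and the axioms for the Courant bracket then gives $\Phi_V-\Phi'_V=\partialV\beta_V$, exactly as in the non-fibred case in \cite{BCG}. Assertion (ii) is automatic: any isomorphism $F:\C_V\to \C'_V$ of fibred exact Courant algebroids covers the identity on the extension data $\Ver^*\hookrightarrow \cdot\twoheadrightarrow \Ver$, so $F^{-1}\circ \sigma'_V$ is a splitting of $\C_V$ producing the same $3$-form as $\sigma'_V$ does on $\C'_V$. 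This defines a map from the set of isomorphism classes to $H^3(\Ver)$.

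For (iii), surjectivity is immediate from the explicit construction $\Phi_V\mapsto \C_{\Phi_V}$. For injectivity, assume $\C_V\simeq\C_{\Phi_V}$ and $\C'_V\simeq\C_{\Phi'_V}$ with $\Phi_V-\Phi'_V=\partialV\beta_V$ for some $\beta_V\in\Omega^2(\Ver)$; then the \emph{$B$-field transformation}
\[ F(X_V,\alpha_V):=\bigl(X_V,\ \alpha_V+i_{X_V}\beta_V\bigr) \]
is an isomorphism $\C_{\Phi_V}\to\C_{\Phi'_V}$ over the identity. Compatibility with the pairing follows at once from antisymmetry of $\beta_V$, and compatibility with the bracket reduces precisely to the identity $\Phi_V-\Phi'_V=\partialV\beta_V$.

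The only mildly technical step is the bracket-compatibility check for the $B$-field transformation, which is a direct but bookkeeping-heavy calculation identical in form to the classical non-fibred case; everything else is formal, and once these three items are in place the theorem is established.
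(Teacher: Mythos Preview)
Your proposal is correct and follows essentially the same approach as the paper: the paper treats the theorem as an immediate consequence of the preceding discussion (``In fact, we just proved:''), which establishes that splittings yield vertically closed $3$-forms, that different splittings give cohomologous forms, and that conversely every such form defines a fibred exact Courant structure. You have simply made explicit the bijectivity argument---in particular the $B$-field transform for injectivity---that the paper leaves as a routine computation.
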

%%%%%%%%%%%%%%%%%%%%%%%%%%%%%%%%%%%%%%%%%%%%%%%%%%%%
%%%%%%%%%%%%%%%%%%%%%%%%%%%%%%%%%%%%%%%%%%%%%%%%%%%%%
%%%%%%%%%%%%%%%%%%%%%%%%%%%%%%%%%%%%%%%%%%%%%%%%%%%%%
\subsection{Symmetries of fibred Courant algebroids}\label{sec:fibrewisedcourant}
%%%%%%%%%%%%%%%%%%%%%%%%%%%%%%%%%%%%%%%%%%%%%%%%%%%%%
%%%%%%%%%%%%%%%%%%%%%%%%%%%%%%%%%%%%%%%%%%%%%%%%%%%%
%%%%%%%%%%%%%%%%%%%%%%%%%%%%%%%%%%%%%%%%%%%%%%%%%%%
We now take a closer look at the space of symmetries of a fibred exact Courant algebroid. The discussion will be similar as in
 Section \ref{sec:fibrewisedprequantization} for fibred prequantizations.

\begin{definition}\label{def:der:CV} A \emph{derivation} $D$ of $\CV$ with symbol $X\in\X(M)$ is an application $D:\Gamma(\CV)\mapsto \Gamma(\CV)$ satisfying:
\begin{align*}
 D(f\alpha)&= f D \alpha+\Lie_X (f) \alpha\\
 D\bigl([\alpha,\beta]\bigr)&= [D\alpha,\beta\bigr]+\bigl[\alpha,D\beta\bigr],\\
 \Lie_X \langle\alpha,\beta\rangle^+&=\langle D\alpha,\beta\rangle^++\langle\alpha,D\beta\rangle^+.
\end{align*}\end{definition}
Because of the first condition above, we have $\sharp_V(D(\alpha))=[X,\sharp_V\alpha]$, thus, as in the Prop. \ref{prop:der:fibredpreq:anchor}, the surjectivity of the anchor map  implies that:
\begin{proposition}\label{prop:der:fibredcourant:project}
 The space of derivations $\Der(\C_V)$ of a fibred exact Courant algebroid comes canonically equipped with a map $\Der(\CV)\to \X(B)$. 
\end{proposition}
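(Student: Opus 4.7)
The plan is to imitate verbatim the argument of Prop.~\ref{prop:der:fibredpreq:anchor} for fibred prequantizations. The desired map will be $D \mapsto p_*X$, where $X \in \X(M)$ is the symbol of the derivation $D$, and the entire content of the statement is that $X$ is automatically $p$-projectable.

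The first step is to establish the identity
\begin{equation*}
\sharp_V(D\alpha) \;=\; [X,\sharp_V\alpha] \qquad \forall\,\alpha \in \Gamma(\CV),
\end{equation*}
which is already announced right before the statement. This is a routine algebraic consequence of the axioms in Def.~\ref{def:der:CV}: applying the bracket-derivation rule $D[\alpha,f\beta] = [D\alpha,f\beta] + [\alpha,D(f\beta)]$, expanding both sides using the Leibniz identity $[\alpha,f\beta] = f[\alpha,\beta] + \Lie_{\sharp_V\alpha}(f)\,\beta$ and the first axiom $D(g\gamma) = g D\gamma + \Lie_X(g)\gamma$, and then cancelling the common terms, one is left with $\Lie_{\sharp_V D\alpha}(f) = \Lie_{[X,\sharp_V\alpha]}(f)$ for every $f \in C^\infty(M)$, whence the claim.

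With this identity in hand, the rest is immediate. Since $\sharp_V \colon \CV \twoheadrightarrow \Ver$ is surjective, the relation above forces $[X,Y] \in \Gamma(\Ver)$ for every $Y \in \Gamma(\Ver)$; thus $X$ preserves the vertical distribution. Equivalently, $X$ is $p$-projectable onto a unique $\underline{X} \in \X(B)$, and the assignment $D \mapsto \underline{X}$ produces the desired map $\Der(\CV) \to \X(B)$, built canonically out of $\sharp_V$ and $p$.

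The only genuine computation is the verification of $\sharp_V\circ D = [X,\,\cdot\,]\circ \sharp_V$; the remainder of the proof is formally identical to the prequantization case. As in Section~\ref{sec:fibrewisedprequantization}, this justifies speaking of \emph{gauge} symmetries and using the notation $\Der_B(\CV)$, and it is the entry point for the Courant analogue of Thm.~\ref{thm:gauge:der:2form} that one expects to develop next.
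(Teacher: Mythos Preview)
Your proposal is correct and follows essentially the same approach as the paper: the paper states the identity $\sharp_V(D\alpha)=[X,\sharp_V\alpha]$ just before the proposition and then invokes surjectivity of $\sharp_V$ exactly as in Prop.~\ref{prop:der:fibredpreq:anchor}. Your contribution is simply to spell out the algebraic derivation of that identity from the Leibniz and bracket-derivation axioms, which the paper leaves implicit.
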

For this reason, we will always refer to the space of symmetries of a fibred prequantization as \emph{gauge} symmetries, and denote this space  $\Der_B(\C_V)$
 rather than $\Der(\C_V)$ though this is redundant by the preceding proposition.
\begin{definition}
 A fibred exact Courant algebroid $A_V$ has \emph{enough symmetries} if the map $\Der_B(A_V)\to \X(B)$ in Prop. \ref{prop:der:fibredcourant:project} is surjective. 
\end{definition}

In the case where $\C_V$ does not admit enough symmetries, the Courant algebroid structures induced on different fibres may
 not even be isomorphic so we will assume that there are enough symmetries. Then we can repeat the reasoning of Thm. \ref{thm:der:3form}.
 We obtain the following description for symmetries:

\begin{theorem}\label{thm:gauge:der:3form} Let $M\xrightarrow{p} B$ be a fibration.
 If a fibred exact Courant algebroid $\Ver^*\hookrightarrow \mathcal{C}_V \twoheadrightarrow \Ver$ has enough symmetries, then the space of these symmetries fits
 into an exact sequence of Lie algebras:
 \begin{equation*}\label{seq:gauge:der:3form}
   \Omega^2_{cl}(\Ver)\hookrightarrow\Der_B(\C_V)\twoheadrightarrow \X_B(M).
 \end{equation*}
More precisely, given a splitting $\sigma:\Ver\to \mathcal{C}_V$ with corresponding closed vertical
  $3$-form  $\Phi_V$, there is an identification
\begin{equation*}\label{gauge:sym:3form:split}\Der_B(\mathcal{C}_V)\simeq\Bigl\{(X,\alpha^\star_V)\in \X_B(M)\ltimes\Omega^2(\Ver), \Lie_X\Phi_V=\partialV\alpha_V^\star\Bigr\},
 \end{equation*}
such that the Lie algebra structure on $\Der_B(\CV)$ and its action on $\mathcal{C}_V \simeq \Ver {\scriptstyle\mathop{\ltimes}\limits_{\Phi_F}} \Ver^*$ respectively take the following form:
\begin{align}
\bigl[(X,\alpha_V^\star),(Y,\beta_V^\star)\bigr]&=\bigl([X,Y],\Lie_X\beta^\star_V-\Lie_Y\alpha^\star_V\bigr),
\label{eq:bracket:gauge:der:split:3form}\\
(X,\alpha_V^\star)\triangleright (Y_V,\beta_V)&=(\Lie_X Y_V, \Lie_X \beta_V+i_{Y_V}\alpha_V^\star).
\label{eq:gaugedersplit3form}
\end{align}
\end{theorem}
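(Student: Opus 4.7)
My plan is to follow the template set by Theorems \ref{thm:der:3form} and \ref{thm:gauge:der:2form}: fix a splitting $\sigma_V:\Ver\to \CV$ in order to identify $\CV\simeq \Ver \oplus \Ver^*$ with the bracket and pairing twisted by $\Phi_V:=[\sigma_V,\sigma_V]-\sigma_V[\,,\,]$, and then classify derivations in matrix form with respect to this decomposition. The only genuinely new input compared to the non-fibred Theorem \ref{thm:der:3form} is that the symbol $X$ is allowed to be transverse to $\Ver$; this is controlled by Proposition \ref{prop:der:fibredcourant:project}, which forces $X\in\X_B(M)$, so that $\Lie_X$ is well-defined on $\Gamma(\Ver)$ and on $\Omega^\bullet(\Ver)$.

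\medskip
The key structural step is to observe that, in the exact Courant algebroid $\CV$, the sub-bundle $\Ver^*\subset \CV$ is simultaneously isotropic for $\langle\,,\,\rangle^+$ and equal to $\ker\sharp_V$. Since any derivation $D$ preserves both the pairing and the anchor up to the symbol $X$, it must leave $\Gamma(\Ver^*)$ invariant. The first Leibniz rule in Definition \ref{def:der:CV} together with pairing preservation then forces
\[
D(0,\beta_V)=(0,\Lie_X\beta_V).
\]
On the complementary piece $\sigma_V(\Ver)$, a derivation can a priori have both a $\Ver$-component and a $\Ver^*$-component. The $\Ver$-component is pinned down again by the first Leibniz rule and pairing preservation to be $\Lie_X Y_V$, while the $\Ver^*$-component is $C^\infty(M)$-linear in $Y_V$, hence of the form $i_{Y_V}\alpha_V^\star$ for some $\alpha_V^\star\in\Omega^2(\Ver)$. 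This proves the identification \eqref{eq:gaugedersplit3form} and the action formula.

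\medskip
The compatibility constraint $\Lie_X \Phi_V=\partialV\alpha_V^\star$ will come from applying the second Leibniz rule (bracket preservation) to two sections of the form $(Y_V,0)$ and $(Z_V,0)$. The twisted bracket produces a $\Ver^*$-term $i_{Y_V\wedge Z_V}\Phi_V$, and matching
\[
D\bigl[(Y_V,0),(Z_V,0)\bigr]=\bigl[D(Y_V,0),(Z_V,0)\bigr]+\bigl[(Y_V,0),D(Z_V,0)\bigr]
\]
along the $\Ver^*$-direction will, after cancellation of the Lie-derivative terms using the Cartan-type identities, reduce exactly to the condition that $\Lie_X\Phi_V-\partialV\alpha_V^\star$ vanishes when contracted against arbitrary $Y_V\wedge Z_V$. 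For the bracket formula \eqref{eq:bracket:gauge:der:split:3form} I would compute $[D_1,D_2]=D_1\circ D_2-D_2\circ D_1$ on both pure vertical and pure covertical elements and check that the resulting derivation has symbol $[X_1,X_2]$ and $2$-form piece $\Lie_{X_1}\alpha_2^\star-\Lie_{X_2}\alpha_1^\star$. Finally, the exact sequence follows immediately: the kernel of $\Der_B(\CV)\to\X_B(M)$ consists of pairs $(0,\alpha_V^\star)$ with $\partialV\alpha_V^\star=0$, i.e.\ $\Omega^2_{cl}(\Ver)$, while surjectivity is exactly the \emph{enough symmetries} hypothesis.

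\medskip
The main obstacle I expect is purely bookkeeping: carefully tracking the twist $\Phi_V$ and the mixed horizontal/vertical Lie derivatives for a symbol $X\in\X_B(M)$ that is not itself vertical. In particular, one must verify that the expression $\Lie_X\Phi_V$ truly makes sense as a vertical $3$-form — which relies on $X$ being projectable so that $\Lie_X$ preserves $\Gamma(\Ver)$ and $\Omega^\bullet(\Ver)$ — and that $\partialV$ (rather than a full $\d$) is the correct operator appearing on the right-hand side of the compatibility. Apart from this, every calculation parallels the proof of Theorem \ref{thm:der:3form} with $\d$ replaced by $\partialV$, so I would be brief and refer to the non-fibred case after establishing these two points.
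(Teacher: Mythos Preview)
Your proposal is correct and follows essentially the same approach as the paper: fix a splitting, write derivations in block form relative to $\CV\simeq\Ver\oplus\Ver^*$, and extract the constraint $\Lie_X\Phi_V=\partialV\alpha_V^\star$ from bracket preservation, exactly paralleling Theorem~\ref{thm:der:3form}. In fact the paper omits the proof altogether, simply stating that one can ``repeat the reasoning of Thm.~\ref{thm:der:3form}'' (itself proved by analogy with Thm.~\ref{thm:der:2form}), so your outline is more detailed than what the paper provides; your explicit use of pairing preservation to pin down the action on $\Ver^*$ is a natural Courant-specific refinement that the paper leaves implicit.
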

It immediately follows:
\begin{corollary}
 A fibred exact Courant algebroid $\CV$ admits enough symmetries \emph{iff} the equation \eqref{eq:phi:closed2} admits a solution $\alpha_H^\star$.
\end{corollary}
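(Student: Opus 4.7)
The plan is to use Theorem \ref{thm:gauge:der:3form} to convert the surjectivity condition into an infinitesimal one, then compare directly with equation \eqref{eq:phi:closed2}. Fix a complete Ehresmann connection with horizontal lift $h$, and a splitting $\sigma:\Ver\hookrightarrow\CV$; the associated vertical $3$-form $\Phi_V$ is vertically closed, and under the identification of Thm. \ref{thm:gauge:der:3form}, a derivation of $\CV$ is a pair $(X,\alpha_V^\star)\in\X_B(M)\ltimes\Omega^2(\Ver)$ satisfying $\Lie_X\Phi_V=\partialV\alpha_V^\star$.

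For the easy direction ($\Leftarrow$), assume $\alpha_H^\star\in\Omega^1(B,\Omega^2(\Ver))$ satisfies $\partialH\Phi_V+\partialV\alpha_H^\star=0$. Given $u\in\X(B)$, set $D_u:=\bigl(h(u),-\alpha_H^\star(u)\bigr)$. Since $\Lie_{h(u)}\Phi_V=(\partialH\Phi_V)(u)=-\partialV\alpha_H^\star(u)$, the pair $D_u$ lies in $\Der_B(\CV)$, and by construction its symbol $p_*$-projects onto $u$, showing that $\Der_B(\CV)\twoheadrightarrow\X(B)$ is surjective.

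For the harder direction ($\Rightarrow$), suppose $\CV$ has enough symmetries. Cover $B$ by coordinate charts $U_a$ with frames $\partial_1^a,\dots,\partial_n^a$, and choose derivations $D_i^a=(X_i^a,\alpha_i^{a,\star})$ with $p_*X_i^a=\partial_i^a$. Write $X_i^a=h(\partial_i^a)+V_i^a$ with $V_i^a\in\Gamma(\Ver|_{p^{-1}(U_a)})$. Using $\partialV\Phi_V=0$ and Cartan's magic formula we have $\Lie_{V_i^a}\Phi_V=\partialV(i_{V_i^a}\Phi_V)$, so the derivation condition rewrites as
\begin{equation*}
(\partialH\Phi_V)(\partial_i^a)=\partialV\bigl(\alpha_i^{a,\star}-i_{V_i^a}\Phi_V\bigr).
\end{equation*}
Setting $\alpha_{H,a}^\star:=-\sum_i\bigl(\alpha_i^{a,\star}-i_{V_i^a}\Phi_V\bigr)\otimes dx_i^a$ yields a local solution $\partialH\Phi_V+\partialV\alpha_{H,a}^\star=0$ on $p^{-1}(U_a)$.

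To glue, pick a partition of unity $\{\rho_a\}$ on $B$ subordinate to $\{U_a\}$ and define $\alpha_H^\star:=\sum_a p^*(\rho_a)\cdot\alpha_{H,a}^\star$. Since pulled-back functions from $B$ are annihilated by $\partialV$ (they are vertically constant) and since $\partialH\Phi_V$ is $C^\infty(B)$-linear in its horizontal argument, one verifies $\partialH\Phi_V+\partialV\alpha_H^\star=\sum_a p^*(\rho_a)\bigl(\partialH\Phi_V+\partialV\alpha_{H,a}^\star\bigr)=0$. The only mild subtlety, and the one point deserving care, is this last gluing step: it is essential that $\rho_a$ be basic so that it commutes with $\partialV$, which is precisely why the partition of unity is chosen on $B$ rather than on $M$.
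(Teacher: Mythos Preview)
Your proof is correct and supplies exactly what the paper omits: the corollary appears right after Theorem~\ref{thm:gauge:der:3form} with only the words ``It immediately follows,'' so the paper's argument is just the identification of $\Der_B(\CV)$ with pairs $(X,\alpha_V^\star)$ satisfying $\Lie_X\Phi_V=\partialV\alpha_V^\star$, which you use as well. Your partition-of-unity construction in the $(\Rightarrow)$ direction is the right way to turn mere surjectivity onto $\X(B)$ into a globally defined $C^\infty(B)$-linear $\alpha_H^\star$, a point the paper leaves implicit.

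Two small caveats. First, Theorem~\ref{thm:gauge:der:3form} is formally stated under the hypothesis of enough symmetries, so citing it for the identification in the $(\Leftarrow)$ direction looks circular; however, the identification itself (as in the proof of Theorem~\ref{thm:der:3form}, which the fibred version mimics) does not use that hypothesis, so the argument is sound once this is noted. Second, there is a harmless sign slip: with the paper's convention $\partialV|_{\Omega^{1,2}}=-\d_V\otimes\id$, equation~\eqref{eq:phi:closed2} evaluated at $u$ reads $\Lie_{h(u)}\Phi_V=+\d_V(\alpha_H^\star(u))$, so the lift should be $(h(u),\alpha_H^\star(u))$ and the extra minus in your definition of $\alpha_{H,a}^\star$ should be dropped. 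Since the statement concerns only the \emph{existence} of a solution $\alpha_H^\star$, this sign does not affect the conclusion.
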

Recall here that the existence of solutions to \eqref{eq:phi:closed2} is independent of the Ehresmann connection and splitting $\sigma_V$ chosen.
 Furthermore, using the characterization in Thm. \ref{thm:fibredcourant:spectralelement} it is easily seen that a fibred exact Courant
 algebroid $\CV \in E_1^{(0,3)}$ has enough symmetries \emph{iff} it is a cocycle for $\d_1$.
%%%%%%%%%%%%%%%%%%%%%%%%%%%%%%%%%%%%%%%%%%%%%%%%%%%%%%%%%%%%%%%%%%%%%%%%%%%%%%%%%
%%%%%%%%%%%%%%%%%%%%%%%%%%%%%%%%%%%%%%%%%%%%%%%%%%%%%%%%%%%%%%%%%%%%%%%%%%%%%%%%%
\begin{theorem}\label{thm:gauge:adjoint:action:long:exact:sequence:3form}%%%%%%%%%%%%
%%%%%%%%%%%%%%%%%%%%%%%%%%%%%%%%%%%%%%%%%%%%%%%%%%%%%%%%%%%%%%%%%%%%%%%%%%%%%%%%%
Let $M\xrightarrow{p} B$ be a fibration. For a fibred exact Courant algebroid $\Ver^*\hookrightarrow \mathcal{C}_V \twoheadrightarrow \Ver$ with enough symmetries,
 the ``adjoint`` action $\Gamma(\mathcal{C}_V)\xrightarrow{\ad} \Der_B(\mathcal{C}_V)$
induces an exact sequence of the form:
 \begin{equation*}\label{seq:ad:ver:3form}
  \Omega_{cl}^1(\Ver)\hookrightarrow \Gamma(\CV) \xrightarrow{\ad} \Der_B(\mathcal{C}_V)\twoheadrightarrow \X(B)\ltimes H^2(\Ver).
   \end{equation*}
More precisely, using the identification of Thm. \ref{thm:gauge:der:3form}, we have:
\begin{equation}\label{eq:ad:vertical:3form}\ad_{(X_V,\alpha_V)}=(X_V,i_{X_V}\Phi_V-\partialV\alpha_V)\in\X(\Ver)\ltimes\Omega^2(\Ver). 
\end{equation}
\end{theorem}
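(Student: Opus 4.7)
The plan is to adapt the argument of Theorem \ref{thm:adjoint:action:long:exact:sequence:2form} to the fibred Courant setting. First I would fix a splitting $\sigma_V:\Ver\to\CV$, identifying $\CV$ with $\Ver\mathop{\ltimes}_{\Phi_V}\Ver^*$. For sections $(X_V,\alpha_V),(Y_V,\beta_V)\in\Gamma(\CV)$, the Courant bracket reads
$$\bigl[(X_V,\alpha_V),(Y_V,\beta_V)\bigr] = \bigl([X_V,Y_V],\, \Lie_{X_V}\beta_V - i_{Y_V}\partialV\alpha_V + i_{X_V\wedge Y_V}\Phi_V\bigr),$$
and comparing term by term with the derivation action \eqref{eq:gaugedersplit3form} of $(X,\alpha_V^\star)\in\Der_B(\CV)$ on $(Y_V,\beta_V)$ reads off $\ad_{(X_V,\alpha_V)} = (X_V,\, i_{X_V}\Phi_V - \partialV\alpha_V)$, which is formula \eqref{eq:ad:vertical:3form}. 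This indeed defines a derivation, since $\partialV(i_{X_V}\Phi_V - \partialV\alpha_V) = \Lie_{X_V}\Phi_V$ by vertical Cartan and the closedness of $\Phi_V$.

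Exactness at $\Gamma(\CV)$ is then immediate: $\ad_{(X_V,\alpha_V)} = 0$ forces $X_V = 0$ and $\partialV\alpha_V = 0$, identifying the kernel with $\Omega^1_{cl}(\Ver)$. To construct the surjection onto $\X(B)\ltimes H^2(\Ver)$, I would use the ``enough symmetries'' hypothesis to pick a lift $s:\X(B)\to \Der_B(\CV)$ of the symbol projection, and decompose any derivation $D=(X,\alpha_V^\star)$ as $D=s(p_*X) + (X_V,\gamma_V)$ with vertical-symbol remainder. Since $(X_V,\gamma_V)\in\Der_B(\CV)$ satisfies $\partialV\gamma_V = \Lie_{X_V}\Phi_V = \partialV(i_{X_V}\Phi_V)$, the vertical $2$-form $\gamma_V - i_{X_V}\Phi_V$ is $\partialV$-closed, and $D$ will be sent to $(p_*X,\, [\gamma_V - i_{X_V}\Phi_V])\in \X(B)\ltimes H^2(\Ver)$.

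Exactness at $\Der_B(\CV)$ then follows directly: $D$ is in the kernel iff $p_*X=0$, so that $D$ itself has vertical symbol and no lift is needed, and $\gamma_V - i_{X_V}\Phi_V = -\partialV\alpha_V$ for some $\alpha_V\in\Omega^1(\Ver)$, i.e., iff $D = \ad_{(X_V,\alpha_V)}$. Surjectivity is then immediate from the construction: for any $(u,[\zeta])$ with $\zeta\in\Omega^2_{cl}(\Ver)$, the derivation $s(u) + (0,\zeta)$ maps to $(u,[\zeta])$.

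The main obstacle I expect is verifying that the constructed surjection is a Lie algebra homomorphism with respect to the canonical semidirect product structure on $\X(B)\ltimes H^2(\Ver)$, where $\X(B)$ acts via the flat Gauss-Manin connection on $H^\bullet(\Ver)$ recalled in Section \ref{Sec:Leray:Serre}. This amounts to showing that the bracket \eqref{eq:bracket:gauge:der:split:3form} of two derivations descends, modulo the image of $\ad$, to the semidirect bracket on the target, which requires a careful calculation invoking the commutation relations \eqref{eq:commutation:relations}. The choice of lift $s$ is unique only up to an $H^2(\Ver)$-valued $1$-form on $B$, so the resulting isomorphism is not canonical, but the exact sequence itself is well-defined.
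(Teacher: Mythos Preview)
The paper gives no proof of this theorem; it is stated as the fibred analogue of Theorem~\ref{thm:adjoint:action:long:exact:sequence:3form}, itself modelled on Theorems~\ref{thm:adjoint:action:long:exact:sequence:2form} and~\ref{thm:gauge:adjoint:action:long:exact:sequence:2form}, with the proof left implicit. Your argument is correct and supplies precisely the details those analogies gloss over: the computation of $\ad$ via the splitting, the identification of $\ker\ad$, and the construction of the surjection.

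Your use of a lift $s:\X(B)\to\Der_B(\CV)$ is in fact a genuine extra step compared to the non-fibred case. There one can write the surjection directly as $(X,\alpha^\star)\mapsto[i_X\Phi_F-\alpha^\star]$; in the fibred setting the symbol $X$ is only projectable, so $i_X\Phi_V$ is undefined and one must first strip off a lift of $p_*X$ to get a vertical-symbol remainder before the formula applies. The paper's omitted argument would have to do the same, so your route is the natural one.

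The ``main obstacle'' you flag is less serious than you suggest. Since $\ad_{[a,b]}=[\ad_a,\ad_b]$, the image of $\ad$ is an ideal and the cokernel is automatically a Lie algebra; the induced action of $u\in\X(B)$ on $[\zeta]\in H^2(\Ver)$ is computed from $[s(u),(0,\zeta)]=(0,\Lie_{Y_u}\zeta)$, and since $Y_u$ differs from any horizontal lift $h(u)$ by a vertical field, the class $[\Lie_{Y_u}\zeta]=[\Lie_{h(u)}\zeta]$ is exactly the Gauss--Manin action, independently of $s$. The choice of $s$ affects only the splitting of $H^2(\Ver)\hookrightarrow\mathrm{coker}(\ad)\twoheadrightarrow\X(B)$, not its isomorphism type. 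One small caveat: since the Courant bracket on $\Gamma(\CV)$ is not skew, the sequence is strictly speaking one of Leibniz algebras at that term, a point the paper leaves tacit.
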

%%%%%%%%%%%%%%%%%%%%%%%%%%%%%%%%%%%%%%%%%%%%%%%%%%%%%%%%%%%%%%%%%%%%%%%%%%%%%%%%%
%%%%%%%%%%%%%%%%%%%%%%%%%%%%%%%%%%%%%%%%%%%%%%%%%%%%%%%%%%%%%%%%%%%%%%%%%%%%%%%%%

%%%%%%%%%%%%%%%%%%%%%%%%%%%%%%%%%%%%%%%%%%%%%%%%%%%%%%%%%%%%%%%%%%%%%%%%%%%%%
%%%%%%%%%%%%%%%%%%%%%%%%%%%%%%%%%%%%%%%%%%%%%%%%%%%%%%%%%%%%%%%%%%%%%%%%%%%%%
\subsection{The 3-connection picture}%%%%%%%%%%%%%%%%%%%%%%%%%%%%%%%%%%%%%%
%%%%%%%%%%%%%%%%%%%%%%%%%%%%%%%%%%%%%%%%%%%%%%%%%%%%%%%%%%%%%%%%%%%%%%%%%%%%%
%%%%%%%%%%%%%%%%%%%%%%%%%%%%%%%%%%%%%%%%%%%%%%%%%%%%%%%%%%%%%%%%%%%%%%%%%%%%%
In this section, we shall describe the equations \eqref{eq:phi:closed1}-\eqref{eq:phi:closed4}
 as those of a $3$-connection \cite{MartinsPicken3}. Such connections take values in a $2$-crossed module,
 that is, a complex of Lie algebras:\vspace{-2pt}
\begin{equation*}\mathfrak{l}\xrightarrow{\delta}\mathfrak{e}\xrightarrow{\partial}\mathfrak{g},
\end{equation*}
with left actions of $\g$ on $\mathfrak{l}$ and $\mathfrak{e}$ by Lie algebra morphisms and a pairing  $\mathfrak{e}\times  \mathfrak{e} \to \mathfrak{l}$,
 called the Pfeiffer lifting. We refer to \cite{MartinsPicken3} for the compatibility relations. The basic example of a $2$-crossed
 module is given by the automorphisms of a crossed module \cite[Sec. 1.2.7]{MartinsPicken3}. There is a notion of connection with values in a $2$-crossed module, defined as a triple $(a,m,\theta)$ where:
 $a\in\Omega^1(B,\g),$
 $m\in \Omega^2(B,\mathfrak{e}),$
 $\theta\in\Omega^3(B,\mathfrak{l}).$
Furthermore, in \cite{MartinsPicken3} Martins-Picken show that under the following conditions:
\begin{align}
 \partial m&=da +a_\wedge a,\label{eq:3connection:1} \\
\theta&=d m+ a_\wedge m,\label{eq:3connection:2} \\
\delta \theta&=d m+ a_\wedge m,\label{eq:3connection:3}
\end{align} there is a well-defined notion of $3$-dimensional holonomy, defined on the Gray $3$-groupoid of the base. They described the Wilson $3$-sphere of observables in this manner. In that context, the $3$-curvature $4$-form is given by:
\begin{equation*}\Theta=d \theta+ a_\wedge \theta +m_\wedge m.
\end{equation*}
In our situation, we can recover the above structure equations, though the local connection forms do not take values in a $2$-crossed module. Instead, we shall consider the following complex:
\begin{equation}\label{symDGLA}C^\infty(F)\to \Gamma(\C) \to \Der(\C),
\end{equation}
Recall how the structure is given once a splitting $TF\hookrightarrow \mathcal{C}$ was chosen. The above exact sequence takes the explicit form:
$$\xymatrix@R=2pt{
C^\infty(F) \ar[r]^<<<<{\partial} &
              \X(F){\scriptscriptstyle\mathop{\ltimes}\limits_{\Phi_F}}\Omega^1(F)\ar[r]^>>>>>{\ad}
                                                                 &\X(F)\ltimes\Omega^2(F)_{\Phi_F}   \\
            f\quad \ar@{|->}[r]    &\quad (0,\d f)\quad\\
                              &\quad (X,\alpha)\quad\ar@{|->}[r]       &   \quad  (X,i_X\Phi_F-\d\alpha)}$$
Moreover, the action of $\Der(\mathcal{C})$ is given by:
\begin{itemize}
 \item $(X,\omega)\triangleright  f=\Lie_X f$,
 \item $(X,\omega)\triangleright  (Y,\alpha)=([X,Y], \Lie_X \alpha+i_Y\omega)$,
\end{itemize}
and we have a pairing $ \mathcal{C}\times  \mathcal{C} \to C^\infty(F)$ given by
$$\langle (X,\alpha),(Y,\beta)) \rangle^{+} =\frac{1}{2}\bigl( i_X\beta+ i_Y\alpha\bigr).$$

In order to recover the equations \eqref{eq:3connection:1}-\eqref{eq:3connection:3}, it is simpler to first work globally, as we did for $2$ forms in the section  \ref{closed:extensions1}. Alternatively, these computations may be done in local coordinates as in the section \ref{sec:2connections}, provided one takes the appropriate notion of chart, \emph{i.e.} integrating
\eqref{integration:holonomy:phi} in order to obtain some $\Delta^\star\in\Omega^1(\mathcal{U},\Omega^2(\Ver))$ acting as a gauge transformation. By doing this, we obtain the following correspondence in a straightforward way:

\begin{proposition}
Let $M\to B$ be a complete fibration and $\Phi\in \Omega^3(M)$ be a closed $2$-form.
Then, given a complete connection $\Hor$ there is, for any $x\in B$, a neighborhood $\mathcal{U}\subset B$ and a local coordinate chart $(\Psi_{\mathcal{U}}, \Delta_\mathcal{U}^\star)$, where $\Psi_{\mathcal{U}}:p^{-1}(\mathcal{U})\simeq \mathcal{U}\times F$ and $\Delta^\star_{\mathcal{U}}:\mathcal{U}\to \Omega^2(F)$, for which we obtain the following local correspondence:
\begin{eqnarray*}%%%%%%%%%%%%%%%%%%%%%%%%%%%%%%%%%%%%%%%%%%%%%%%%%%%%%%%%%
\hspace{50pt}\D=h\oplus {\alpha_H^\star} &\loc & a\in\Omega^1(B,\g) \\
(C,\omega_H^\star)& \loc &  m\in \Omega^2(B,\mathfrak{e})           \\
\Phi_H &\loc &\theta\in\Omega^3(B,\mathfrak{l}) 
\end{eqnarray*}%%%%%%%%%%%%%%%%%%%%%%%%%%%%%%%%%%%%%%%%%%%%%%%%%%%%%%%%%

In particular, on both sides covariant derivative and curvature take the form:
\begin{eqnarray*}%%%%%%%%%%%%%%%%%%%%%%%%%%%%%%%%%%%%%%%%%%%%%%%%%%%%%%%%%
 \D^\triangleright&\loc& d\ +a_\wedge  \\
 \text{\emph{Curv}}_\D=(C,\partialH{\alpha_H^\star})&\loc& d a+ a_\wedge a.
 \end{eqnarray*}%%%%%%%%%%%%%%%%%%%%%%%%%%%%%%%%%%%%%%%%%%%%%%%%%%%%%%%%%

Under this correspondence, the structure equations \eqref{eq:3connection:1} \eqref{eq:3connection:2}  for $\Phi$ to be closed are the structure equations of a $3$-connection with values in $C^\infty(F)\to \Gamma(\C) \to \Der(\C),$ namely:
\begin{eqnarray*}%%%%%%%%%%%%%%%%%%%%%%%%%%%%%%%%%%%%%%%%%%%%%%%%%%%%%%%%%%%%%
\partialC\Phi_V+\partialV\omega_H^\star=-\partialH{\alpha_H^\star}
&\iff & \ad_{(C,\omega_H^\star)}=\text{\emph{Curv}}_\D \\
                                         &\loc & \partial m=da +a_\wedge a,  \\
 -\partialV\Phi_H=\partialC{\alpha_H^\star}+\partialH\omega_H^\star
& \iff & -\partialV\Phi_H=\D^\triangleright(C,\omega_H^\star)   \\
                                         &\loc &\delta \theta=d m+ a_\wedge m  .
\end{eqnarray*} 
Finally, we obtain the curvature $4$-form as being precisely the left hand side of the equation \eqref{eq:phi:closed5}
\begin{eqnarray*} 
 \Theta=  \partial_H\Phi_H+\partialC \omega_H^\star&\loc&\Theta=d \theta+ a_\wedge \theta +m_\wedge m 
\end{eqnarray*}              
\end{proposition}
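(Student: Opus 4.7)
The plan is to follow the pattern of Section \ref{sec:2connections}, now in the Courant algebroid setting. I would first fix $x\in B$, choose a neighborhood $\mathcal{U}\simeq\mathbb{R}^n$ with coordinates $(x_1,\dots,x_n)$, and use radial parallel transport with respect to the complete connection $\Hor$ to construct $\Psi_\mathcal{U}:p^{-1}(\mathcal{U})\xrightarrow{\sim}\mathcal{U}\times F$. In this chart the vertical $3$-form $\Phi_V$ generally still depends on the base coordinates, but integrating equation \eqref{eq:phi:closed2} along radial rays as in \eqref{integration:holonomy:phi} would produce a smooth family $\Delta^\star_\mathcal{U}:\mathcal{U}\to\Omega^2(F)$ satisfying $\Phi_V+\partialV\Delta^\star_\mathcal{U}=\Phi_F$, where $\Phi_F$ denotes the value of $\Phi_V$ at the origin. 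The pair $(\Psi_\mathcal{U},\Delta^\star_\mathcal{U})$ would then serve as the coordinate chart, with $\Delta^\star_\mathcal{U}$ playing the role of a $2$-gauge transformation identifying each fibre Courant algebroid with the fixed model $\mathcal{C}_{\Phi_F}$.

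Given the chart, I would define the local connection data by
\[a:=\bigl(h,\alpha_H^\star+\partialH\Delta^\star_\mathcal{U}\bigr),\quad m:=\bigl(C,\omega_H^\star+\partialC\Delta^\star_\mathcal{U}\bigr),\quad \theta:=\Phi_H,\]
and first verify that $a$ takes values in $\Der(\mathcal{C}_{\Phi_F})$. This follows directly from Thm. \ref{thm:gauge:der:3form} once \eqref{eq:phi:closed2} is rewritten under the gauge shift: the constraint $\Lie_{h_i}\Phi_F=\d(\alpha_H^\star+\partialH\Delta^\star_\mathcal{U})_i$ is precisely the characterization of a derivation. Next, using the bracket of derivations \eqref{eq:bracket:gauge:der:split:3form} from Thm. \ref{thm:gauge:der:3form} together with the commutation relations \eqref{eq:commutation:relations} to handle the $\Delta^\star_\mathcal{U}$-corrections, a computation parallel to the $2$-form case should give $da+a_\wedge a=(C,\partialH\alpha_H^\star)$, while the adjoint formula \eqref{eq:ad:vertical:3form} should yield $\ad_m=(C,\partialV\omega_H^\star+\partialC\Phi_V)$. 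Comparing these would establish the fake-curvature equation $\partial m=da+a_\wedge a$ as equivalent to \eqref{eq:phi:closed3}. Likewise, applying the action \eqref{eq:gaugedersplit3form} componentwise should produce $dm+a_\wedge m=\bigl(0,\partialH\omega_H^\star+\partialC\alpha_H^\star\bigr)$, which when compared with $\delta\theta$ would reduce to \eqref{eq:phi:closed4}.

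The step I expect to be the main obstacle is the identification of the curvature $4$-form $\Theta=d\theta+a_\wedge\theta+m_\wedge m$ with the closure equation \eqref{eq:phi:closed5}. The terms $d\theta+a_\wedge\theta$ should unfold to $\partialH\Phi_H$, but the missing piece $\partialC\omega_H^\star$ must come from the Pfeiffer term $m_\wedge m$, which by construction is computed through the symmetric pairing $\langle\,,\,\rangle^+$ on $\mathcal{C}_{\Phi_F}$. Unwinding, $\langle m,m\rangle^+$ evaluated on $(u_0,\dots,u_3)$ expands into a signed sum of contractions $i_{C(u_i,u_j)}\omega_H^\star(u_k,u_\ell)$, and the hard part will be the combinatorial verification that the signs and multiplicities match exactly the definition of $\partialC\omega_H^\star$, and that the $\Delta^\star_\mathcal{U}$-corrections introduced in $m$ cancel cleanly against the local fluctuations of $\Phi_V$. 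Once this is settled, independence of the identification from the chart, up to $2$-gauge transformations, will be automatic from the construction.
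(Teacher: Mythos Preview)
Your proposal is correct and follows precisely the approach the paper indicates: the paper omits a detailed proof, noting only that one may either work globally as in Section~\ref{closed:extensions1} or in local coordinates as in Section~\ref{sec:2connections} after integrating \eqref{integration:holonomy:phi} to obtain the gauge shift $\Delta^\star_\mathcal{U}$, and your write-up is a faithful elaboration of the latter route. Your identification of the Pfeiffer term $m_\wedge m$ with $\partialC\omega_H^\star$ via the symmetric pairing is exactly the missing detail the paper leaves implicit.
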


\begin{remark} Here, though we obtain the structure equations of a $3$-connection, notice that \eqref{symDGLA} does not carry the structure of a $2$-crossed module. This is not so surprising. Indeed, crossed-modules correspond to a \emph{strictly} categorified version of a Lie algebra \cite{BaezCrans}, for which the space of symmetries is a $2$-crossed-module \cite{Cond}, \cite[Sec. 1.2.7]{MartinsPicken3}. In our situation, \eqref{symDGLA} plays the role of symmetries \cite{Uribe} for $C^\infty(F)\to \Gamma(C)$, the which is not a crossed module, but rather a (semi-strict) Lie $2$-algebra \cite{BaezCrans} \cite{Royt2} \cite{Rog1}.
\end{remark}

\section*{Acknowledgements} 
 Amongst others, the author would like to thank warmly Camille Laurent-Gengoux, Marco Zambon, and Bernardo Uribe for useful conversations that helped improving this work.

% -----------------------------------------------------------------------
\bibliographystyle{amsplain}

\end{document}